\documentclass[11pt,reqno]{amsart}

\usepackage{amsmath,amsthm,amssymb,amsfonts}
\usepackage{hyperref}
\usepackage{cleveref}
\usepackage{enumitem}

\newtheorem{theorem}{Theorem}[section]
\newtheorem{lemma}[theorem]{Lemma}
\newtheorem{definition}[theorem]{Definition} 
\newtheorem{remark}[theorem]{Remark}
\newtheorem{assumption}[theorem]{Assumption}
\newtheorem{proposition}[theorem]{Proposition}

\AddToHook{env/lemma/begin}{\crefalias{theorem}{lemma}}
\AddToHook{env/proposition/begin}{\crefalias{theorem}{proposition}}
\AddToHook{env/definition/begin}{\crefalias{theorem}{definition}}
\AddToHook{env/remark/begin}{\crefalias{theorem}{remark}}
\AddToHook{env/assumption/begin}{\crefalias{theorem}{assumption}}

\newcommand{\R}{\mathbb{R}}
\renewcommand{\S}{\mathbb{S}}
\newcommand{\dd}{{\, \mathrm d}}

\renewcommand{\d}{\mathrm{d}}
\newcommand{\dt}{\frac{{\d}}{{\d}t}}
\newcommand{\op}[1]{\mathsf{#1}}

\numberwithin{equation}{section}
\usepackage[left=2cm,right=2cm,top=2cm,bottom=2cm]{geometry}

\title[Quantitative stability of constant equilibria in a non-linear alignment model]{Quantitative stability of constant equilibria in a non-linear alignment model of self-propelled particles}

\author[E. Bouin]{Émeric Bouin} \address[E. Bouin]{CEREMADE - Université Paris-Dauphine, PSL Research University, UMR CNRS 7534, Place du Mar\'echal de Lattre de Tassigny, 75775 Paris Cedex 16, France.} \email{bouin@ceremade.dauphine.fr}

\author[A. Frouvelle]{Amic Frouvelle} \address[A. Frouvelle]{CEREMADE - Université Paris-Dauphine, PSL Research University, UMR CNRS 7534, Place du Mar\'echal de Lattre de Tassigny, 75775 Paris Cedex 16, France.} \email{frouvelle@ceremade.dauphine.fr}
\date\today

\begin{document}

\begin{abstract}
  We are interested in the long-time behaviour of the kinetic Vicsek equation, rigorously derived as the mean-field limit~\cite{bolley2012meanfield} of a coupled system of~$N$ stochastic differential equations describing particles moving at unit velocity and aligning with their neighbours. We focus on the local-in-space version (that may for instance appear as a moderate interaction limit instead of mean-field), which is not a priori globally well-posed and could explode in finite time. Despite its simple expression, little is rigorously established about the behaviour of its solutions. We use hypocoercivity methods to show that finite time explosion does not occur in the vicinity of uniform and homogeneous equilibria in space below the critical threshold. We recast the now-classic~\cite{villani2009hypocoercivity} approach of modifying Sobolev-type norms by adding cross-terms, linked to commutators between the different operators appearing in the kinetic equation. However, the fact that the velocity space is the sphere adds significant subtleties and requires to develop an adapted algebraic framework of operators. Taking advantage of this new framework, we manage to perform an approach \textit{à la} Hérau~\cite{herau2007short} to show the nonlinear stability. Our main results are a quantitative decay estimate in the case of the whole space, despite the absence of control of the $L^1$ norm of the perturbation, and a gain in regularity at the nonlinear level which allows to have well-posedness and stability in the space~$H^{s,0}(\mathbb{R}^d\times\S)$ for some~$s<\frac{d}2$ (that is to say without a priori uniform bound in space on the~$L^2$ norm in velocity, that would come with Sobolev injection in the case~$s>\frac{d}2$).

\end{abstract}

\maketitle

\section{Introduction}
In this paper, we are interested in qualitative properties of the following Vlasov--Fokker--Planck type model for alignment of self-propelled particles
\begin{equation} \label{eq-VFP}
  \partial_tf+v\cdot\nabla_xf+\op{\nabla}_v\cdot(\op{\nabla}_v(\mathbb{J}[f]\cdot v) \, f)=\op{\Delta}_vf,
\end{equation}
where~$f(t,x,v)$ represents the density at time~$t\geqslant0$ of particles at position~$x\in\mathbb{R}^d$ (or in a flat torus of dimension~$d$, with $d>1$) having velocity~$v\in\mathbb{S}$. There, $\S$ denotes the unit sphere of $\R^d$ and the operators~$\op{\nabla}_v$, $\op{\nabla}_v \cdot$ and $\op{\Delta}_v$ stand for the gradient, the divergence and the Laplacian of a function on the sphere, respectively. Finally~$\mathbb{J}[f]$ denotes the first moment in velocity of~$f$, given by
\begin{equation} 
\mathbb{J}[f]=\int_{\mathbb{S}}vf\,\d v,\label{def-Jf}
\end{equation}
where the measure on the sphere is the Lebesgue measure normalized to be of total mass one.

This evolution equation is the spatially localized version of the kinetic Vicsek equation, which models the alignment of self-propelled particles moving at constant speed, aligning their velocities with neighbors by dipolar potential and subject to angular noise.

More precisely, let us consider the following stochastic differential system of~$N$ interacting particles
\begin{equation}
  \begin{cases}
   \d X_k = c \,V_k\, \d t,\\
    \d V_k = \frac{\nu}{N}\sum\limits_{i=1}^N\frac1{R^d}K\big(\frac1{R}(X_i-X_j)\big)\nabla_{V_k}(V_i\cdot V_k)\,\d t + \sqrt{2\sigma} \,P_{V_k^\perp}\circ\d B_{t,k},
  \end{cases}
  \label{Vicsek-SDE}
\end{equation}
in which the $k^{\mathrm{th}}$ particle, at position~$X_k\in\mathbb{R}^d$ (or in a flat torus of dimension~$d$), moves at constant speed~$c$ in the direction of its orientation~$V_k\in\mathbb{S}$ ; this orientation~$V_k$ has a drift of intensity~$\nu$ towards the orientations of other neighbouring particles (the potential~$V_i\cdot V_k$ is maximized when $V_k=V_i$) weighted by the observation kernel $K$ (a probability measure) which emphasizes the contributions of particles located within distance of order~$R$ ; finally the orientation is subject to an angular noise of intensity~$\sqrt{2\sigma}$, under the form of the contribution $P_{V_k^{\perp}}\circ \d B_{t,k}$ of a Brownian motion on the sphere (here $P_{V_k^\perp}$ is the projection on the orthogonal of~$V_k$, $B_{t,k}$ are independent brownian motions on~$\mathbb{R}^d$, and~$\circ$ denotes the fact that the stochastic differential equation must be understood in Stratonovich formulation, see~\cite{hsu2002stochastic} for more on this topic). This is a time-continuous version (see~\cite{degond2008continuum} for instance) of the Vicsek model~\cite{vicsek1995novel}, which was originally designed as a discrete in time synchronous jump process for the velocities.

When the number~$N$ of particles is large, the empirical distribution of the system of stochastic differential equations~\eqref{Vicsek-SDE} converges to the solution of the following equation~\cite{bolley2012meanfield}:
\begin{equation} \label{eq-VFP-Kernel}
  \partial_tf+c\,v\cdot\nabla_xf+\nu\,\op{\nabla}_v\cdot(\op{\nabla}_v((K_R*_x\mathbb{J}[f])\cdot v) \, f)=\sigma\op{\Delta}_vf, 
\end{equation}
where~$K_R*_x\mathbb{J}[f]$ is the spatial convolution between the scaled kernel~$\frac1{R^d}K(\frac1R\cdot)$ and the moment~$\mathbb{J}[f]$ defined in~\eqref{def-Jf}.

With a careful change of scale in time and space, and by multiplying~$f$ by a positive scalar~$\rho$ (not considering it as a probability density anymore), we may take without loss of generality~$c=\nu=\sigma=1$, and therefore the evolution~\eqref{eq-VFP} is simply the formal limit of~\eqref{eq-VFP-Kernel} as~$R\to0$. 

Both limits~$N\to\infty$ and~$R\to0$ may also be taken simultaneously, by replacing~$R$ by~$R_N$ in \eqref{Vicsek-SDE}, and letting~$R_N\to0$ as~$N\to\infty$. In the “moderate” interaction regime, when~$R_N^d\cdot N \to \infty$ (which means that each particle interacts with a large number of particles), we expect that the empirical distribution still converges to a solution of~\eqref{eq-VFP}. The methods used to obtain this kind of limit to a local equation in space require a well-posed limit system (see for example~\cite{oelschlager1985law,jourdain1998propagation,diez2020propagation,chen2021rigorous,chaintron2022propagation2}).

Equation~\eqref{eq-VFP} also naturally appears in the formal derivation of macroscopic limits of the kinetic Vicsek equation~\cite{degond2013macroscopic}. Indeed, a hydrodynamic scaling by a parameter~$\varepsilon$ in space and time leads to a rescaled equation with~$R_\varepsilon\to0$ as~$\varepsilon\to0$, leading at the main order to the localized version of the operator.

The well-posedness and the long time behaviour of the spatial-homogeneous version of~\eqref{eq-VFP} is now well understood~\cite{frouvelle2012dynamics}. There is a phase transition depending on the density~$\rho=\int_{\S}f\d v$. When~$\rho\leqslant d$ (or if we keep the parameters as in~\eqref{eq-VFP-Kernel}, when~$\frac{\nu\rho}{\sigma}\leqslant d$, which means either a low alignment strength or a high level of noise), all solutions converge towards the isotropic distribution on the sphere, exponentially fast if~$\rho<d$. When~$\rho>d$, except for the initial conditions~$f^0$ such that~$\mathbb{J}[f^0]=0$, the solutions all converge towards a von Mises equilibrium on the sphere, concentrated around a given orientation, which depends on the initial condition, but in a non-explicit way. 

Regarding the space-inhomogeneous model~\eqref{eq-VFP}, to our knowledge, until recently, few results were available for this equation. The recent work~\cite{briant2023well} deals with a more generic class of kinetic equations for self-propelled particles. It can be straightly adapted to obtain the local in time well-posedness of the model in $\mathsf{L}^\infty(\mathbb{R}^d\times\mathbb{S})$, that is, existence and uniqueness of a bounded weak solution to \eqref{eq-VFP}. We also provide a quantitative pointwise (in space) estimate on the $L^\infty$ norm which allows to observe the finite speed of propagation.

\begin{theorem}\label{thm-existence-uniqueness-Linfini}
Given any nonnegative initial condition~$f^0\in L^\infty(\mathbb{R}^d\times\mathbb{S})$, there exists a unique weak solution to~\eqref{eq-VFP} in~$\mathcal{C}([0,T],L^\infty(\mathbb{R}^d\times\mathbb{S}))$ for all~$T<\frac{\sqrt{d}}{(d-1)\|f^0\|_\infty}$. It is nonnegative and satisfies for all~$t\in[0,T]$ and~$x\in\mathbb{R}^d$ the following estimate,
\begin{equation}\label{eq-Linfinity-estimate}
  \|f(t,x,\cdot)\|_{L^\infty(\S)}\leqslant\|f^0\|_{\infty,B(x,t)\times\S}+(d-1)\int_0^t\|\mathbb{J}[f](s)\|_{\infty,B(x,s-t)}\|f(s)\|_{\infty,B(x,s-t)\times\S}\, \d s,
\end{equation}
where the left-hand side denotes the limit of~$\|f(t)\|_{\infty,B(x,r)\times\S}$ as~$r\to0$.
\end{theorem}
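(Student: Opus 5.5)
The plan is to treat \eqref{eq-VFP} as a linear kinetic Fokker--Planck equation with a given drift field and then close by a fixed point. For a given bounded vector field $J(t,x)$, set $\Phi_J(t,x,v)=J(t,x)\cdot v$ and consider
\[
\partial_tf+v\cdot\nabla_xf+\op{\nabla}_v\cdot(\op{\nabla}_v\Phi_J\, f)=\op{\Delta}_vf .
\]
Expanding the divergence gives $\op{\nabla}_v\Phi_J\cdot\op{\nabla}_vf+(\op{\Delta}_v\Phi_J)f$. On the sphere, $\op{\Delta}_v(J\cdot v)=-(d-1)\,J\cdot v$ because the restrictions of linear functions are eigenfunctions of the Laplace--Beltrami operator. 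The equation is therefore a transport--diffusion equation with a zeroth-order term $-(d-1)(J\cdot v)f$, whose coefficient is bounded by $(d-1)|J|$. Note that no derivatives of $J$ in $x$ appear. This is what should allow working purely in $L^\infty$, following the approach of \cite{briant2023well}.

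\textbf{Step 1: local maximum principle for the linear problem.} Regularize first, by mollifying $J$ in $x$ and adding a small viscosity, and then pass to the limit. Consider the backward characteristics in $x$, which move at unit speed. The value at $(t,x)$ then only depends on data in the cone $\{(s,y):|y-x|\leqslant t-s\}$; this is where the sign convention $B(x,s-t)$ in the statement must be read as radius $t-s$. Within the cone, compare $f$ with the supersolution $M(s)$ that depends only on time and solves
\[
M'(s)=(d-1)\|J(s)\|_{\infty,B(x,t-s)}\,\|f(s)\|_{\infty,B(x,t-s)\times\S}.
\]
Alternatively, apply the parabolic maximum principle in $v$ to the quantity $\sup_{B(x,t-s)\times\S}f(s)$, which is nonincreasing up to the source term. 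This gives \eqref{eq-Linfinity-estimate} with $\mathbb{J}[f]$ replaced by $J$. Nonnegativity is preserved by the same maximum principle.

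\textbf{Step 2: a priori bound.} Since $|\mathbb{J}[f]|\leqslant\|f\|_{L^\infty(\S)}$, a cruder bound than the $\sqrt d$ refinement, set $m(t)=\|f(t)\|_\infty$. Then $m'\leqslant(d-1)m^2$, so $m(t)\leqslant \|f^0\|_\infty/(1-(d-1)\|f^0\|_\infty t)$. The sharper time $\sqrt d/((d-1)\|f^0\|_\infty)$ should come from a better estimate of the zeroth-order term along the worst direction. The aim is to bound $\sup_v (J\cdot v)$ at a point where $f$ is maximal by using $|\mathbb{J}[f]|$ together with the maximum principle at the maximizing velocity. I expect this constant-tracking to be the main obstacle; the structure of the argument would be unchanged with the cruder constant.

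\textbf{Step 3: fixed point and uniqueness.} Define the map $J\mapsto\mathbb{J}[f_J]$ on $\mathcal{C}([0,T],L^\infty)$ restricted to the ball given by the a priori bound. For two fields, the difference $g=f_{J_1}-f_{J_2}$ solves the linear equation with source $-\op{\nabla}_v\cdot(\op{\nabla}_v((J_1-J_2)\cdot v)f_{J_2})$. This source contains a derivative in $v$ of $f_{J_2}$, so one would either use the parabolic smoothing of the heat semigroup on $\S$ (Duhamel with $\|\op{\nabla}_v e^{s\op{\Delta}_v}\|\lesssim s^{-1/2}$, which is integrable) or argue in a weak dual formulation. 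This gives contraction on short times, and iterating with the a priori bound reaches any $T$ below the threshold. Uniqueness of weak solutions follows from the same difference estimate via Gronwall.
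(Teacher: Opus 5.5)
Your Step 1 is a legitimate variant of the paper's argument. The paper proves \eqref{eq-Linfinity-estimate} in three moves: it truncates $f^0$ and $\mathcal{J}$ to the backward cone, shows with a weighted $L^2$ estimate that the truncated linear solution coincides with $f$ in the cone, and then applies the global maximum principle. You instead compare with $M(s)$ directly in the cone. This works, provided you justify the absence of lateral boundary data by the same device, e.g.\ a Gr\"onwall estimate on $\int\varphi\,(f-M)_+^2$ with $\varphi=\chi(|x-x_0|+t)$ and $\chi$ nonincreasing, so that $\partial_t\varphi+v\cdot\nabla_x\varphi\leqslant0$. The rest of the proposal has two genuine gaps.

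First, you only reach $T<1/((d-1)\|f^0\|_\infty)$, so the stated time is not proved. The missing factor is not a maximum-principle subtlety at the maximizing velocity; it is simply \eqref{J2fL2}. Cauchy--Schwarz with $\int_{\mathbb{S}}(v\cdot e)^2\,\mathrm{d}v=\frac1d$ (normalized measure) gives $|\mathbb{J}[f]|\leqslant d^{-1/2}\|f\|_{L^2(\mathbb{S})}\leqslant d^{-1/2}\|f\|_{L^\infty(\mathbb{S})}$. Insert this into $\|f_{n+1}(t)\|_\infty\leqslant\|f^0\|_\infty\exp\big((d-1)\int_0^t\|\mathbb{J}[f_n](s)\|_\infty\,\mathrm{d}s\big)$. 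The set $\{\|f(t)\|_\infty\leqslant(\|f^0\|_\infty^{-1}-\frac{d-1}{\sqrt d}\,t)^{-1}\}$ then becomes invariant under the iteration, which is exactly the threshold $\frac{\sqrt d}{(d-1)\|f^0\|_\infty}$.

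Second, and more seriously, Step 3 relies on an estimate you do not have. The bound $\|\nabla_v e^{s\op{\Delta}_v}\|\lesssim s^{-1/2}$ is for the heat semigroup on $\mathbb{S}$. Your Duhamel formula instead involves the propagator of $\partial_t+v\cdot\nabla_x+\nabla_v\cdot(P_{v^\perp}J_1\,\cdot\,)-\op{\Delta}_v$. Transport does not commute with $\op{\Delta}_v$, and it cannot be moved to the source because it costs an $x$-derivative, which is unbounded in $L^\infty$. What you really need is $\|e^{tL}\nabla_v\cdot F\|_\infty\lesssim t^{-1/2}\|F\|_\infty$ for the kinetic semigroup on $\mathbb{R}^d\times\mathbb{S}$, either with a drift merely bounded in $(t,x)$ or with the drift also moved to the source. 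That is a hypoelliptic $L^\infty$ estimate with no explicit kernel, and your uniqueness claim rests on it too.

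The missing idea is that the difference estimate closes in a localized $L^2$ norm, where the $v$-derivative is harmless. For $u=f-\tilde f$, Young's inequality lets the dissipation $\int\varphi|\nabla_v u|^2$ absorb the term $\int\varphi\,\tilde f\,\mathbb{J}[u]\cdot\nabla_v u$. Then $|\mathbb{J}[u]|^2\leqslant\frac1d\|u\|^2_{L^2(\mathbb{S})}$ closes a Gr\"onwall inequality for $\int\varphi u^2$ with the cone weight above. This is how the paper gets uniqueness and finite speed of propagation, and how it handles data in $L^\infty$ but not in $L^2$ (e.g.\ perturbations of a constant) by truncation. Existence comes from the invariant $L^\infty$ ball by compactness. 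The same estimate applied to consecutive iterates even gives $a_n'\leqslant Ca_n+C'a_{n-1}$ with $a_n=\int\varphi(f_{n+1}-f_n)^2$, so the iteration converges in $L^2_{\mathrm{loc}}$; this is what replaces your $L^\infty$ contraction.
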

    
In our discussion, the density~$f$ is not assumed to have finite total mass (if~$x\in\mathbb{R}^d$). Observe that~$f \equiv \rho$ is a stationary solution to~\eqref{eq-VFP}. We are interested in the behaviour around this constant stationary solution. As said before, when~$\rho<d$, this is the only homogeneous stationary solution in space, since it is then a stationary solution to the spatial-homogeneous version of~\eqref{eq-VFP}. The main goal of the present paper is to prove that the stability result of this isotropic distribution is also true in the space-inhomogeneous framework. Our aim is to show an energy~--~energy-dissipation estimate on $g$, implying that the solution is global for small perturbations. Our strategy is to follow an hypocoercive-hypoelliptic approach based on high order Sobolev norms.

As we can see in the estimate~\eqref{eq-Linfinity-estimate}, we need a $L^\infty$ bound on~$\mathbb{J}[f]$, which is itself controlled by the~$L^2$ norm in velocity, so we will not actually need to control derivatives in~$v$. And regarding the~$L^\infty$ norm in $x$, a natural Sobolev space would therefore be~$H^{s,0}$ (defined through~$\|f\|_{H^{s,0}}^2=\|f\|_{2}^2+\|(-\op{\Delta}_x)^{\frac{s}2}f\|_2^2)$, with~$s>\frac{d}2$, thanks to Sobolev injections. However, the regularizing properties of the equation (of hypoelliptic type), and the fact that the nonlinearity is governed by moments in velocity allows us to gain some derivatives in space and prove the local well-posedness in a larger space, which is one of the main results of the present paper.

\begin{theorem}\label{thm-well-posedness-Hs0}
  Let $s>\frac{d}2-\frac14$ for $d\geqslant3$, or~$s>\frac78$ in the case~$d=2$. For any~$M>0$, there exists a time~$T_{M}>0$ such that for any~$f^0\in H^{s,0}(\mathbb{R}^d\times\mathbb{S})$ satisfying~$\|f^0\|_{H^{s,0}}\leqslant M$, there exists a unique weak solution~$f$ to~\eqref{eq-VFP} in~$C([0,T_M),H^{s,0}(\mathbb{R}^d\times\mathbb{S}))$ with initial condition~$f^0$. Furthermore the solution depends continuously on the initial condition~$f^0$, and for any~$T<T_M$, there exists a constant~$C>0$ such that for all~$t\in(0,T]$, we have,
  \[\|\nabla_vf\|_{H^{s,0}}\leqslant\frac{C}{\sqrt{t}}\|f^0\|_{H^{s,0}}, \quad \|P_{v^\perp}\nabla_xf\|_{H^{s,0}}\leqslant\frac{C}{t^{\frac32}}\|f^0\|_{H^{s,0}}, \quad \|v\cdot\nabla_xf\|_{H^{s,0}}\leqslant\frac{C}{t^2}\|f^0\|_{H^{s,0}}.\]
  The time~$T_M$ can be explicitly computed (depending on~$d$,~$s$ and~$M$), as well as the constant~$C$ (which depends only on~$d$,~$s$ and~$\frac{T}{T_M}$).
\end{theorem}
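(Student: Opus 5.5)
The plan is a fixed-point argument in a weighted space built from a hypoelliptic (Hérau-type) functional. I first treat the linear problem $\partial_t f + v\cdot\nabla_x f = \op{\Delta}_v f + \op{\nabla}_v\cdot(F)$, where $F$ is a given source. For this problem I would build the short-time functional
\[
\mathcal{E}(t)=\|f\|^2 + a t\|\op{\nabla}_v f\|^2 + 2b t^2\langle \op{\nabla}_v f, P_{v^\perp}\nabla_x f\rangle + c t^3\|P_{v^\perp}\nabla_x f\|^2 + e\,t^4 \|v\cdot\nabla_x f\|^2 ,
\]
with all norms in $H^{s,0}$. Because the $x$-derivatives commute with the equation, the $H^{s,0}$ version is the same computation as the $L^2$ one. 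The commutators on the sphere are
\[
[\op{\nabla}_v, v\cdot\nabla_x]=P_{v^\perp}\nabla_x, \qquad [P_{v^\perp}\nabla_x, v\cdot\nabla_x] \propto (v\cdot\nabla_x) P_{v^\perp}\nabla_x ,
\]
together with the lower-order terms produced by $\op{\Delta}_v$ acting on $v$. These generate the hierarchy above, with exponents $1,3,4$ giving the rates $t^{-1/2}, t^{-3/2}, t^{-2}$. The last level is needed because $v\cdot\nabla_x f$ is only recovered from $\op{\nabla}_v (P_{v^\perp}\nabla_x f)$, through the identity
\[
\op{\nabla}_v\cdot(P_{v^\perp}\nabla_x f)=-(d-1)\,v\cdot\nabla_x f+\dots
\]
Choosing $1\gg a\gg b\gg c\gg e$ with $b^2<ac$, I aim to get
\[
\frac{d}{dt}\mathcal{E}+\kappa\Big(\|\op{\nabla}_v f\|^2+t^2\|P_{v^\perp}\nabla_x f\|^2+\dots\Big)\lesssim \text{source terms}.
\]
This yields the three stated smoothing estimates in the linear case by Duhamel.

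Next comes the nonlinear term $\op{\nabla}_v\cdot(\op{\nabla}_v(\mathbb{J}[f]\cdot v)f)$. In the energy it pairs against $\op{\nabla}_v f$, and the dissipation absorbs that factor. What remains is to control $\|\mathbb{J}[f]\, f\|_{H^{s,0}}$, and by fractional Leibniz this requires $\mathbb{J}[f]\in L^\infty_x$ together with $\mathbb{J}[f]\in \dot H^s_x$. The key gain is an averaging lemma. Since $\mathbb{J}[f]$ is a velocity moment, the time-weighted control of $v\cdot\nabla_x f$ and of $P_{v^\perp}\nabla_x f$ should give extra spatial regularity for $\mathbb{J}[f]$, roughly $\tfrac14$ of a derivative in a time-integrated sense. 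One way to see this is to interpolate
\[
\int_0^T \|\mathbb{J}[f]\|_{H^{s+\sigma}}^2\,dt,
\]
using the time weights to keep it integrable. This is why $s>\frac d2-\frac14$ suffices: $\mathbb{J}[f]\in L^2_t H^{s+1/4}$ with $s+\frac14>\frac d2$, hence $\mathbb{J}[f]\in L^2_tL^\infty_x$. For $d=2$ the threshold $\frac78$ presumably comes from a weaker averaging gain (only $\frac18$), which reflects the worse geometry of the circle; the exponent would be checked there.

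The closure proceeds in three steps.
\begin{enumerate}[label=(\roman*)]
\item Define the map $\Phi$ that sends $g$ to the solution of the linear equation with drift $\mathbb{J}[g]$. Work in the space of $g$ with $\sup_t\mathcal{E}\le 2M^2$ and with finite weighted dissipation norm.
\item Show that $\Phi$ maps this ball to itself for $T_M$ small. The estimate has the form $\frac{d}{dt}\mathcal E\le C\|\mathbb{J}[g]\|_{L^\infty}^2\mathcal E$, and Grönwall applies because $\|\mathbb{J}[g]\|_{L^\infty}^2\in L^1_t$ with a small integral over $[0,T_M]$.
\item Show that $\Phi$ is a contraction in the weaker norm $L^\infty_t H^{s,0}$, using the same estimates on the difference.
\end{enumerate}
Uniqueness and continuous dependence follow from the contraction. $T_M$ is explicit from the Grönwall constant, and $C$ depends on $T/T_M$ through the exponential factor.

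The main obstacle is the averaging/regularity gain for $\mathbb{J}[f]$ below the Sobolev threshold, done quantitatively on the sphere with singular time weights, together with checking that the cross terms coming from the nonlinearity do not destroy the hierarchy. The cross term $t^2\langle \op{\nabla}_v f, P_{v^\perp}\nabla_x f\rangle$ is the delicate one, since it involves $\nabla_x\mathbb{J}$ and that requires the extra $\frac14$ derivative exactly there. I would first try to handle it by splitting $\mathbb{J}[f]$ into low and high frequencies and using the dissipation of $P_{v^\perp}\nabla_x f$ on the high part.
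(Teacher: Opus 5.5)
Your architecture matches the paper's: a H\'erau-type functional in $H^{s,0}$ with weights $t,t^2,t^3,t^4$, a frozen-drift linear map, and a fixed point closed by a Gr\"onwall factor whose singularity at $t=0$ is integrable. The gap is in how the nonlinearity is controlled at the higher levels of the functional, which is exactly what allows $s<\frac d2$.

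\textbf{Where the argument fails.} In the time derivative of $t^3\|P_{v^\perp}\nabla_xf\|^2_{H^{s,0}}$, the $x$-derivative falling on the drift produces a term $f\,\nabla_x\mathbb{J}$. After absorbing $t^3\|\nabla_v(P_{v^\perp}\nabla_xf)\|^2$ into the dissipation, you are left at best with $t^3\|f\|^2_{H^{m,0}}\|\nabla_x\mathbb{J}\|^2_{\dot H^{s}}$, where $m=s+\theta>\frac d2$. Interpolation in your functional gives $\|f\|^2_{H^{m,0}}\lesssim t^{-4\theta}\mathcal{E}$. If $\nabla_x\mathbb{J}$ is only controlled through $\nabla_xf$, the $t^4$ weight on $v\cdot\nabla_xf$ gives $\|\nabla_x\mathbb{J}\|^2_{\dot H^s}\lesssim t^{-4}\mathcal{E}$. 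The remainder is then $t^{-1-4\theta}\mathcal{E}^2$, which is not integrable at $t=0$ for any $\theta\geqslant0$. Putting $\nabla_xf$ on the dissipation side instead gives $t^{-4\theta}\mathcal{E}\,\mathcal{D}$, which cannot be absorbed near $t=0$ when $\theta>0$.

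Consequently, the bound $\frac{\d}{\d t}\mathcal{E}\leqslant C\|\mathbb{J}[g]\|^2_{L^\infty}\mathcal{E}$ in your step (ii) is false for the full functional. A time-integrated bound $\mathbb{J}\in L^2_tH^{s+1/4}$ obtained by interpolating with the time weights does not repair it, because it rests on the same $t^{-4}$ control.

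\textbf{The missing ingredient.} What is needed is the paper's velocity-space inequality, \Cref{lemma-Sg2}. Mode by mode in $\xi$, the low spherical-harmonic components of $\hat f(\xi,\cdot)$ are controlled by $\int_{\S}(1-(e\cdot v)^2)|\hat f(\xi,v)|^2\,\d v$, with $e=\xi/|\xi|$. This gives:
\begin{itemize}
\item $\|\nabla_x\op{\Pi}_\ell f\|_2^2\lesssim\|P_{v^\perp}\nabla_xf\|_2^2$ for $d\geqslant4$;
\item the same bound plus an extra term $\|P_{v^\perp}\nabla_xf\|_2^{2-\mu}\|v\cdot\nabla_xf\|_2^{\mu}$ for $d=3$ (any $\mu\in(0,1]$) and for $d=2$ ($\mu=1$).
\end{itemize}
This is an instantaneous gain, not an averaging in time. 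It improves $\|\nabla_x\mathbb{J}\|^2$ from $t^{-4}$ to $t^{-3-\mu/2}$, so the remainder becomes $t^{-4\theta-\mu/2}\mathcal{E}^2$. That is integrable if and only if $4\theta+\frac{\mu}2<1$. With $\theta>\frac d2-s$, this is precisely $s>\frac d2-\frac14$, and $s>\frac78$ when $d=2$.

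\textbf{Side remarks.}
\begin{itemize}
\item The delicate level is the $P_{v^\perp}\nabla_x$ term, not the cross term. Writing $\op{AS}+\op{SA}=2\op{SA}-(d-1)\op{T}$, the cross term only needs $\nabla_v$ of the nonlinearity. The $t^4$ level carries enough weight. A low/high frequency splitting of $\mathbb{J}$ therefore does not address the issue.
\item $P_{v^\perp}\nabla_x$ commutes with $v\cdot\nabla_x$. The missing direction comes from $[\op{A},\op{S}]=-(d-1)\op{T}$, which you do use later.
\item A Kato--Ponce bound on $\|\mathbb{J}f\|_{H^{s,0}}$ would also require $f\in L^\infty_x$, which is not available. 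Use instead that $H^m(\mathbb{R}^d)$ with $m>\frac d2$ is a multiplier on $H^s$.
\end{itemize}
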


The solutions are then actually strong solutions, which are smooth for arbitrary small times, and the finite speed of propagation allows then to define solutions for~$f^0\in H^{s,0}_{\text{loc}}(\mathbb{R}^d\times\S)$, as soon as the local time of existence is bounded below, which is for instance the case if $\|f^0\|_{H^{s,0}(B(x,1)\times\S)}$ is uniformly bounded in~$x$. Therefore we look at a small perturbation of the constant solution~$\rho$ by looking at an initial condition~$f^0=\rho+g^0$ where~$g^0\in H^{s,0}(\mathbb{R}^d\times\mathbb{S})$, which ensures local in time existence and uniqueness of a solution~$f=\rho+g$. The second main result of our work is a quantitative stability result up to the threshold of phase transition, with explicit decay rates.

\begin{theorem}\label{thm-stabilite-nonlineaire-Hs1} Let $s>\frac{d}2-\frac14$ for $d\geqslant3$, or~$s>\frac78$ in the case~$d=2$, and let~$\rho<d$. There exist positive constants~$\eta$,~$C$ and~$\epsilon$ (and~$\lambda$ in the cas of a flat torus), only depending on~$d$,~$\rho$ and~$s$, such that for any initial condition~$g^0$ such that~$\|g^0\|_{H^{s,0}}\leqslant\eta$, the solution~$f$ of equation~\eqref{eq-VFP} with initial condition~$f^0=\rho+g^0$ is global, and satisfies the following decay estimates:
  \begin{enumerate}[label=(\roman*)]
    \item When the domain is a flat torus (and~$g$ has zero average), exponential decay in large time: for all~$t\geqslant0$,
      \[\|g(t,\cdot)\|_{H^{s,0}}\leqslant C\|g^0\|_{H^{s,0}}e^{-\lambda t}.\]
     \item Algebraic decay controlled by spatial $L^2$ moments in~$\mathbb{R}^d$: for all~$t\geqslant0$
      \[\|g(t,\cdot)\|_{H^{s,0}}\leqslant C\frac{\|g^0\|_{H^{s,0}}}{\big(1+\frac{\|g^0\|^2_{H^{s,0}}}{\|g^0\|^2_{H^{s,0}}+\int_{\mathbb{R}^d\times\S}|x|^2(g^0(x,v))^2\dd x \dd v}\,t\big)^{\epsilon}}.\]
    \end{enumerate}
\end{theorem}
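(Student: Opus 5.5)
We write $f=\rho+g$. Since $\op{\nabla}_v(\mathbb{J}[\rho]\cdot v)=0$ and $\op{\nabla}_v\cdot\op{\nabla}_v(J\cdot v)=-(d-1)J\cdot v$, the perturbation solves
\[\partial_tg+v\cdot\nabla_xg=\op{\Delta}_vg+(d-1)\rho\,\mathbb{J}[g]\cdot v-\op{\nabla}_v\cdot\big(\op{\nabla}_v(\mathbb{J}[g]\cdot v)\,g\big).\]
The linear collision part $\op{L}g=\op{\Delta}_vg+(d-1)\rho\,\mathbb{J}[g]\cdot v$ is self-adjoint in $L^2(\S)$. It is diagonal on spherical harmonics: it vanishes on constants, acts as $-(d-1)(1-\frac{\rho}{d})$ on the degree-one harmonics (since $\int v\otimes v=\frac1d\mathrm{Id}$), and as $-\ell(\ell+d-2)$ on degree $\ell\geqslant2$. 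Hence for $\rho<d$ we get microscopic coercivity, $-\langle \op{L}g,g\rangle\geqslant c_\rho\|g-\langle g\rangle_v\|^2$ with $c_\rho>0$, and in fact $-\langle \op{L}g,g\rangle\gtrsim\|\op{\nabla}_vg\|^2$ (the correction from the degree-one part is absorbed since $\rho<d$). The kernel consists of functions of $x$ only, so the only missing dissipation is on the density $\langle g\rangle_v$.

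Next I build a hypocoercive functional at the level of $H^{s,0}$, à la Hérau/Villani, working mode by mode in Fourier in $x$ with frequency $\xi$. The functional is
\[\mathcal{E}(g)=\|g\|^2_{H^{s,0}}+a\|\op{\nabla}_vg\|^2_{H^{s,0}}+2b\langle \op{\nabla}_vg,P_{v^\perp}\nabla_xg\rangle_{H^{s,0}}+c\|P_{v^\perp}\nabla_xg\|^2_{H^{s,0}},\]
with $c\ll b\ll a\ll1$. Possibly the coefficients have to be frequency-weighted, $a,b,c$ multiplied by $\min(1,|\xi|^{-2})$-type factors, so that $\mathcal{E}$ remains equivalent to $\|g\|^2_{H^{s,0}}$. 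The key commutator is $[\op{\nabla}_v,v\cdot\nabla_x]=P_{v^\perp}\nabla_x$, and on the sphere further curvature terms arise (commutators of $\op{\nabla}_v$ with $\op{\Delta}_v$). This is where the paper's algebraic framework of operators enters, and I would use it to close the identity
\[\dt\mathcal{E}+\mathcal{D}\leqslant 0\]
at the linear level. The dissipation satisfies $\mathcal{D}\gtrsim\|\op{\nabla}_vg\|^2+\min(1,|\xi|^2)\|g\|^2$ in the weighted sense. On the torus, the zero-average condition and the Poincaré inequality give $\mathcal{D}\gtrsim\mathcal{E}$, hence the exponential decay (i). In $\R^d$ the low frequencies are degenerate.

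For the nonlinear term $\op{\nabla}_v\cdot(\op{\nabla}_v(\mathbb{J}[g]\cdot v)g)$ I pair it against $g$ and the other components of $\mathcal{E}$, integrating by parts in $v$ so that one $\op{\nabla}_v$ falls on the test function, which is controlled by $\mathcal{D}$. It remains to bound $\|\mathbb{J}[g]\,g\|_{H^{s}_x}$-type products. This is the main obstacle: $s<\frac d2$ gives no $L^\infty_x$ control. I would exploit that $\mathbb{J}[g]$ is a velocity average, and velocity averaging (or the $P_{v^\perp}\nabla_x$ part of $\mathcal{D}$, together with $\mathbb{J}[g]$ being orthogonal to the kernel) gives roughly an extra $\frac14$-derivative, or $\frac18$ when $d=2$, in the dissipation for $\mathbb{J}[g]$. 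Product estimates in Sobolev spaces would then give
\[|\text{NL}|\lesssim\mathcal{E}^{1/2}\mathcal{D},\]
which is exactly where the thresholds on $s$ should come from. Combined with the local theory of Theorem~\ref{thm-well-posedness-Hs0}, a continuity argument for $\eta$ small yields globality and $\dt\mathcal{E}+\frac12\mathcal{D}\leqslant0$.

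For (ii) on $\R^d$, I use a Nash-type interpolation instead of $L^1$ control (the perturbation's $L^1$ norm is not controlled). The low-frequency part is bounded by $\|\hat g\|_{L^2(|\xi|<r)}$, which is controlled using the weighted norm $\int|x|^2g^2$, i.e.\ $\|\nabla_\xi\hat g\|_{L^2}$, via a Hardy/Poincaré inequality on balls in $\xi$. This requires propagating a bound on $\int|x|^2g^2$ that grows at most like $t$, obtained by differentiating it along the flow: the transport term contributes $\int x\cdot v\,g^2\leqslant\|xg\|\|g\|$, and the nonlinear term is small. Then $\mathcal{D}\gtrsim\mathcal{E}^{1+1/\epsilon'}\,/\,(\text{moment bound})^{1/\epsilon'}$, and integrating this ODE inequality gives the stated algebraic rate with some small $\epsilon$, with the normalization by $\|g^0\|^2+\int|x|^2(g^0)^2$ as in the statement.
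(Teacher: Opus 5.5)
\textbf{Gap 1: closing the cubic term at $s<\frac d2$.} This is the decisive step, and it has a genuine gap.

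Your $\mathcal E$ cannot be equivalent to $\|g\|^2_{H^{s,0}}$. No weight in the spatial frequency makes $a\|\op{\nabla}_vg\|^2_{H^{s,0}}$ bounded by $\|g\|^2_{H^{s,0}}$, so $\mathcal E(g^0)$ is not controlled by $\|g^0\|_{H^{s,0}}\leqslant\eta$. If instead you degenerate the weights at high frequency, your dissipation $\|\op{\nabla}_vg\|^2+\min(1,|\xi|^2)\|g\|^2$ gives no extra spatial regularity of $\mathbb J[g]$. Since $H^s$ is not an algebra for $s<\frac d2$, nothing then bounds $\|g\,\mathbb J[g]\|_{H^{s}}$ by $\mathcal E^{1/2}\mathcal D^{1/2}$. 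Averaging lemmas give space--time gains, not a pointwise-in-time term in $\mathcal D$, and you do not actually derive the $\frac14$ or $\frac18$.

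With constant coefficients there is a further problem. By~\eqref{PhiS2}, $\frac{\d}{\d t}c\|P_{v^\perp}\nabla_xg\|^2$ contains $+2c(d-1)\|v\cdot\nabla_xg\|^2$. The paper cancels this with an extra $\delta\|v\cdot\nabla_xg\|^2$ term, $\gamma\approx\delta$. That term also puts $\|v\cdot\nabla_xg\|$ into the dissipation, which \Cref{lemma-Sg2} requires for $d=2,3$.

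The paper's mechanism is a short-time Hérau argument for the perturbation equation~\eqref{eq-VFPg-condensed} itself, with coefficients $(\alpha_0t,\beta_0t^2,\gamma_0t^3,\delta_0t^4,\nu_0t^4)$:
\begin{itemize}
\item the linear term $\rho\,\overline{\mathcal R}(g,1,g)$ is absorbed at every level because $\rho<d$ (\Cref{lemma-RXg1g,prop-control-Rg1g});
\item $\|g\|_{H^{m,0}}$ with $m=s+\theta>\frac d2$ is reached by interpolation at cost $\nu^{-\theta/2}\sim t^{-2\theta}$;
\item \Cref{lemma-Sg2} costs $(\gamma/\delta)^{\mu/4}\sim t^{-\mu/4}$.
\end{itemize}
The cubic term is thus bounded by $\overline{\mathcal D}+Ct^{-(4\theta+\mu/2)}\overline{\mathcal F}^2$. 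Integrability at $t=0$, i.e.\ $4\theta+\frac\mu2<1$, is precisely $s>\frac d2-\frac14$ (resp.\ $s>\frac78$).

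This yields $g(1)$ small in $H^{s+2,1}$ (\Cref{prop-short-time-nonlinear}). After that, the constant-coefficient functional is used at level $m=s+1>\frac d2$ with plain Sobolev embeddings (\Cref{prop-decrease-F-nonlinear}); no averaging gain is involved for large times. \Cref{thm-well-posedness-Hs0} cannot supply the short-time step, since its smoothing bounds are in terms of $\|f^0\|_{H^{s,0}}$, which is not small (and is infinite on $\R^d$).

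\textbf{Gap 2: the whole-space decay.} This also does not go through as written.

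The bound $\int x\cdot v\,g^2\leqslant\|xg\|_2\|g\|_2$ only gives $\frac{\d}{\d t}\sqrt{\mathcal M}\lesssim\|g\|_2$, so $\mathcal M$ may grow like $t^2$. Then $\frac{\d}{\d t}\mathcal E\lesssim-\mathcal E^2/(\mathcal E+\mathcal M)$ no longer forces decay, since $\int^\infty t^{-2}\,\d t<\infty$.

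You must use the velocity diffusion. The identity $\int_{\S}(x\cdot v)g^2\,\d v=\frac1{d-1}\int_{\S}x\cdot\op{\nabla}_v(g^2)\,\d v$ lets this term be absorbed by $-2(1-\frac\rho d)\int|x|^2|\op{\nabla}_vg|^2$, giving~\eqref{estimate-ddtM-F-MJ2}. Its nonlinear part $\mathcal M\|\mathbb J[g]\|^2_{H^m}\lesssim\mathcal M\,\overline{\mathcal D}$ is handled by Grönwall using $\int_0^\infty\overline{\mathcal D}\lesssim\overline{\mathcal F}(g^0)$, not by smallness alone.

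Even a moment bound growing linearly in $t$, inserted into that differential inequality, yields only logarithmic decay. The algebraic rate needs the feedback $\mathcal M(t)\lesssim\mathcal M(g^0)+\int_0^t\overline{\mathcal F}$ and a coupled argument. The paper uses $\mathcal U=\mathcal M(g^0)+\overline{\mathcal F}(g^0)+C\int_0^t\overline{\mathcal F}$, which satisfies $\mathcal U''\mathcal U\leqslant-p\,\mathcal U'^2$; this is why $\epsilon$ depends on the constants.

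Two smaller points. Hardy's inequality in $\xi$ fails for $d=2$; the paper uses Heisenberg's inequality $\|g\|_2^2\leqslant\frac2d\|xg\|_2\|\nabla_xg\|_2$, valid in every dimension. And $\mathcal M$ must also be propagated through the short-time phase (\Cref{prop-M-short-time}).
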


It is worth noticing that one important structural difficulty that we have manage to overcome is the fact that the nonlinear operator (and even the linearized counterpart) driving the solution $g$ (see \eqref{eq-VFPg-condensed}) does not conserve positivity nor the $\Vert \cdot \Vert_{L^1}$ norm. This makes a significant difference with, \textit{e.g.}, \cite{bouin2020hypocoercivity} where Nash inequalities and mode-by-mode estimates could be used in the whole space $\R^d$. One may also observe this difficulty in a even more recent paper by Carrapatoso and Gervais \cite{carrapatoso2024boltzmann} on the Boltzmann equation in the full space, where rates of decay in $L^2$ are not obtained without using higher $L^p$ spaces, see \cite[Theorem 1.1]{carrapatoso2024boltzmann}. We shall now comment on the way we prove these results. The main difficulty to construct an hypocoercivity and hypoellipticity functional \textit{à la} Villani \cite{villani2009hypocoercivity} is due to the fact that the velocities lie on a strict submanifold of $\R^d$. One can be convinced, taking a simple example in dimension~$2$, that the standard chain of commutators of~\cite{villani2009hypocoercivity} is not enough. Let us consider the law of the kinetic Brownian motion (also called velocity spherical Brownian motion), corresponding to~\eqref{Vicsek-SDE} with~$\nu=0$ and~$\sigma=1$ : a single particle moving at constant unit speed, with velocity given by a Brownian motion on the sphere. Writing in a comfortable way for the sequel~$v=\binom{\cos\theta}{\sin\theta}$, with~$\theta\in\mathbb{R}/2\pi\mathbb{Z}$,~$\op{T}=\binom{\cos{\theta}}{\sin \theta}\cdot\nabla_x$ and~$\op{A}=\partial_\theta$, we obtain
\[\partial_tf+\op{T}f=\op{A}^2f.\]
Villani's chain of commutators would be to write~$\op{C}_0=\op{A}$, $\op{C}_{i+1}=[\op{C}_{i},\op{T}]$ and hope to obtain the missing directions. But by noting~$\op{S}=[\op{A},\op{T}]=\binom{-\sin{\theta}}{\cos\theta} \cdot \nabla_x$, we would then obtain~$\op{C}_1=\op{S}$ and $\op{C}_2=0$. This chain of commutators does not cover all the directions required to satisfy the parabolic hypoellipticity condition of Hörmander; to satisfy it we must not consider~$[\op{S},\op{T}]$ (which is zero) but use again a commutation with~$\op{A}$, getting that~$[\op{A},\op{S}]=-\op{T}$ covers the missing direction.

In dimensions greater than~$2$, the spherical gradient, divergence and Laplacian give rise to an additional difficulty. Thus, we first design an algebraic framework of operators in \Cref{section-algebraic-framework} that allows to operate nicely in any dimension $d$ and that has relevant commutation properties for our purposes. Then, we derive energy~--~energy-dissipation inequalities. First of all, we present the methodology for the kinetic Brownian motion to show important calculations, for which we can use $H^1$ estimations.
The results we obtain for the kinetic Brownian motion are interesting by themselves, and as far as we know they are new in the case of the whole space~$\mathbb{R}^d$ (or for a generic dimension~$d\geqslant4$). They can be summarized under the following theorem.
\begin{theorem}\label{thm-kinetic-Brownian-motion}
  There exist positive constants $C,\lambda,\epsilon$ such that for any smooth and compactly supported solution~$f$ to
  \begin{align*}
  &\partial_tf+v\cdot\nabla_xf=\op{\Delta}_vf\\[5pt]  &f(t=0,\cdot,\cdot) = f^0 ,
  \end{align*} 
  we have the following explicit estimates (the constants~$C$ and~$\epsilon$ are explicitly computable, and only depend on the dimension~$d$ and the shape of the spatial domain).
  \begin{enumerate}[label=(\roman*)]
    \item \label{short-time-KBM} Short-time regularizing estimates: for all~$t\in(0,1]$,
      \[\|\nabla_vf\|_{L^2}\leqslant\frac{C}{\sqrt{t}}\|f^0\|_{L^2}, \quad \|P_{v^\perp}\nabla_xf\|_{L^2}\leqslant\frac{C}{t^{\frac32}}\|f^0\|_{L^2}, \quad \|v\cdot\nabla_xf\|_{L^2}\leqslant\frac{C}{t^2}\|f^0\|_{L^2}.\]
    \item \label{long-time-KBM-torus}When the domain is a flat torus (and~$f$ has zero average), exponential decay in large time: for all~$t\geqslant0$,
      \[\|f(t,\cdot)\|_{H^1}\leqslant C\|f^0\|_{H^1}e^{-\lambda t}.\]
    \item \label{long-time-KBM-Nash}Algebraic decay controlled by~$L^1$ norm in~$\mathbb{R}^d$: for all~$t\geqslant0$,
      \[\|f(t,\cdot)\|_{H^1}\leqslant C\frac{\|f^0\|_{H^1}}{\big(1+\min(1,\frac{\|f^0\|_{H^1}}{\|f^0\|_{L^1}})^{\frac4d}\,t\big)^{\frac{d}4}}.\]
    \item \label{long-time-KBM-Heisenberg}Algebraic decay controlled by spatial $L^2$ moments in~$\mathbb{R}^d$: for all~$t\geqslant0$
      \[\|f(t,\cdot)\|_{H^1}\leqslant C\frac{\|f^0\|_{H^1}}{\big(1+\frac{\|f^0\|^2_{H^1}}{\|f^0\|^2_{H^1}+\int_{\mathbb{R}^d\times\S}|x|^2(f^0(x,v))^2\dd x \dd v}\,t\big)^{\epsilon}}.\]
    \end{enumerate}
  \end{theorem}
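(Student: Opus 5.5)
The plan is to build a Hérau/Villani-type modified $H^1$ functional adapted to the sphere and then derive (i)--(iv) from a single energy--dissipation inequality. Write $\op{A}$ for the spherical gradient $\op{\nabla}_v$, $\op{T}=v\cdot\nabla_x$, and $\op{S}=P_{v^\perp}\nabla_x=[\op{\nabla}_v,\op{T}]$ (componentwise). The basic identity is $\frac12\dt\|f\|^2=-\|\op{\nabla}_vf\|^2$, together with the commutation relations of the algebraic framework. The first is $[\op{\nabla}_v,v\cdot\nabla_x]=P_{v^\perp}\nabla_x$. The second is that the adjoint-type commutator of $\op{\nabla}_v\cdot$ with $P_{v^\perp}\nabla_x$ returns $-(d-1)\,v\cdot\nabla_x$, which is the higher-dimensional version of $[\op{A},\op{S}]=-\op{T}$ in the introduction. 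Finally, $\op{\Delta}_v$ commutes with $\nabla_x$, so all $x$-derivatives satisfy the same equation.

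For the short-time estimates~\ref{short-time-KBM} we consider
\[\mathcal{F}(t)=\|f\|^2+a t\|\op{\nabla}_vf\|^2+2b t^2\langle\op{\nabla}_vf,P_{v^\perp}\nabla_xf\rangle+c t^3\|P_{v^\perp}\nabla_xf\|^2+2e t^3\langle P_{v^\perp}\nabla_xf,\nabla_x\op{\nabla}_vf\rangle\text{-type term}+k t^4\|v\cdot\nabla_xf\|^2 ,\]
with $1\gg a\gg b\gg c\gg\dots$ and $b^2\ll ac$. Differentiating, each term contributes a negative dissipation term from $\op{\Delta}_v$. The cross term $\langle\op{\nabla}_vf,P_{v^\perp}\nabla_xf\rangle$ produces $-\|P_{v^\perp}\nabla_xf\|^2$ through the first commutator. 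A second cross term of the form $\langle P_{v^\perp}\nabla_xf,\,\op{\nabla}_v(v\cdot\nabla_xf)\rangle$ (or equivalently $\langle\op{\nabla}_v\cdot P_{v^\perp}\nabla_xf,\,v\cdot\nabla_xf\rangle$) produces $-(d-1)\|v\cdot\nabla_xf\|^2$ through the second relation. All remaining terms are controlled by Cauchy--Schwarz and absorbed thanks to the hierarchy of constants and powers of $t$; the powers $t,t^3,t^4$ are chosen so that the terms coming from $\frac{\d}{\d t}t^k$ are absorbed as well. This gives $\mathcal F'\le0$ on $(0,1]$, and (i) follows by reading off the weights.

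For the long time, we use the same functional with constant weights, applied to $\nabla_x f$ plus $\|f\|^2$. This produces $\dt\mathcal{E}\le-\kappa\big(\|\op{\nabla}_vf\|^2_{H^1}+\|\nabla_xf\|^2\big)$, where $\mathcal E\simeq\|f\|_{H^1}^2$ and $|\nabla_xf|^2=|P_{v^\perp}\nabla_xf|^2+|v\cdot\nabla_xf|^2$. The missing dissipated quantity is $\|f\|^2$ itself, or more precisely its $v$-average part. On the torus with zero mean, the spherical Poincaré inequality (on $f-\langle f\rangle_v$) and the spatial Poincaré inequality (on $\langle f\rangle_v$, using $\|\nabla_x\langle f\rangle_v\|\le\|\nabla_xf\|$) close the inequality into $\mathcal E'\le-\lambda\mathcal E$, which gives (ii). On $\R^d$ we use the Nash inequality for (iii): $\|f\|_{L^2}^{2+4/d}\le C\|f\|_{L^1}^{4/d}\|\nabla_xf\|^2$, and the $L^1$ norm is nonincreasing since the equation is linear and preserves positivity and mass. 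This gives $\mathcal E'\le-\kappa\,\mathcal E^{1+2/d}/(\|f^0\|_{L^1}^{4/d}+\mathcal E^{2/d})$, and integrating this ODE yields the stated rate with the $\min(1,\cdot)$. For (iv) we use the Heisenberg inequality instead, $\|f\|^2\le C\|xf\|\,\|\nabla_xf\|$. We compute $\dt\|xf\|^2=2\langle xf,vf\rangle\le2\|xf\|\|f\|$, so $\|xf\|$ grows at most linearly in $t$. Combining the two with the dissipation inequality gives a differential inequality of the form $\mathcal E'\le-\kappa\mathcal E^2/(M_0+t\,\mathcal E_0)^2$, with $M_0=\|f^0\|^2+\|xf^0\|^2$. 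A bootstrap on the pair $(\mathcal E,\|xf\|)$ then yields a polynomial decay with some small exponent $\epsilon$.

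The main obstacle is the algebra in dimension $d\ge3$. One must verify that the second-order cross term genuinely generates $\|v\cdot\nabla_xf\|^2$ with a good sign. The remaining terms involve $\op{\nabla}_v$ acting on the projection $P_{v^\perp}$ (curvature terms such as $-(d-1)v$ from $\op{\nabla}_v\cdot P_{v^\perp}$), and these must be bounded by quantities already dissipated. I would first compute all needed commutators in the framework of \Cref{section-algebraic-framework} exactly, then fix the constants in the hierarchy. A secondary difficulty is closing the Heisenberg bootstrap in (iv), because the moment $\|xf\|$ grows in time.
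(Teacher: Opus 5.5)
\textbf{Short-time estimate (i).} Your plan breaks at exactly the step you flagged as the main obstacle: the second cross term is not a cross term. In the paper's notation ($\op{A}=v\wedge\op{\nabla}_v$, $\op{S}=v\wedge\nabla_x$, so that $\op{S}f\cdot\op{A}g=P_{v^\perp}\nabla_xf\cdot\op{\nabla}_vg$) it equals $\langle\op{S}f,\op{AT}f\rangle$. Now use $\op{AT}=\op{TA}+\op{S}$ and the identity $\op{STA}-\op{ATS}=(d-1)\op{T}^2-\op{S}^2$ from \eqref{comATS}, which says $2\langle\op{S}f,\op{TA}f\rangle=(d-1)\|\op{T}f\|^2-\|\op{S}f\|^2$. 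This gives
\[\langle P_{v^\perp}\nabla_xf,\op{\nabla}_v(v\cdot\nabla_xf)\rangle=\tfrac12\big((d-1)\|v\cdot\nabla_xf\|^2+\|P_{v^\perp}\nabla_xf\|^2\big),\]
and your ``equivalent'' form is minus this.

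\begin{itemize}
\item By \eqref{PhiS2}--\eqref{PhiT2}, its time derivative is $-(d-1)\|\op{TA}f\|^2-\|\op{S}\otimes\op{A}f\|^2-(d-1)(d-2)\|\op{T}f\|^2+(d-2)\|\op{S}f\|^2$. For $d=2$ the term is just $\frac12\|\nabla_xf\|^2$ and produces no $\|v\cdot\nabla_xf\|^2$ at all.
\item You also missed a bad term. One has $\frac{\d}{\d t}\|P_{v^\perp}\nabla_xf\|^2=-2\|\op{S}\otimes\op{A}f\|^2+2(d-1)\|v\cdot\nabla_xf\|^2-2\|P_{v^\perp}\nabla_xf\|^2$. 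So your $ct^3$ term produces $+2(d-1)c\,t^3\|v\cdot\nabla_xf\|^2$. For small $t$ this beats the gain $-2(d-1)k\,t^4\|v\cdot\nabla_xf\|^2$ of the $t^4$ term, and, with your hierarchy $c\gg e$, also the gain from the $et^3$ term.
\item With weight $t^3$, your term places $\frac{d-1}{2}e\,t^3\|v\cdot\nabla_xf\|^2$ inside a functional you want nonincreasing. That would give $\|v\cdot\nabla_xf\|\lesssim t^{-3/2}\|f^0\|$, which fails in general. For $v$ near $\xi/|\xi|$, a Fourier mode with $|\xi|\sim t^{-2}$ is damped only at rate $\sim|\xi|^{1/2}$, so $t^{-2}$ is the correct rate.
\end{itemize}

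The missing idea is that $\|v\cdot\nabla_xf\|^2$ should be interpolated, not produced directly. The same identity gives
\[(d-1)\|\op{T}f\|^2\leqslant\|\op{S}f\|^2+2\|\op{S}f\|\,\|\op{TA}f\|\leqslant\|\op{S}f\|^2+\tfrac{1}{\sqrt{\beta\delta}}\big(\beta\|\op{S}f\|^2+\delta\|\op{TA}f\|^2\big).\]
Here $\beta\|\op{S}f\|^2$ comes from the cross term $2\beta\langle\op{\nabla}_vf,\nabla_xf\rangle$, and $\delta\|\op{TA}f\|^2=\delta\|v\cdot\nabla_x\op{\nabla}_vf\|^2$ is the dissipation of the diagonal term $\delta\|v\cdot\nabla_xf\|^2$ itself. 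Take
\[\mathcal{F}=\|f\|^2+\alpha_0t\|\op{A}f\|^2+2\beta_0t^2\langle\op{A}f,\op{S}f\rangle+\gamma_0t^3\|\op{S}f\|^2+\delta_0t^4\|\op{T}f\|^2,\]
with no further cross term. Then this inequality absorbs both the bad term $2(\gamma-\delta)(d-1)\|\op{T}f\|^2$ and the $\delta'$ term, provided $\gamma_0\ll\sqrt{\beta_0\delta_0}$ and $\delta_0\ll\beta_0$. It is also what gives $\mathcal{D}\gtrsim\|\nabla_xf\|^2$. Your treatment of (ii) and (iii) (Poincar\'e, resp.\ Nash with the nonincreasing $L^1$ norm) relies on that bound and otherwise follows the paper.

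\textbf{Long-time estimate (iv).} Your ballistic moment bound cannot close the argument. From $\frac{\d}{\d t}\|xf\|\leqslant\|f\|$ you only get $\mathcal{M}(t)\lesssim\mathcal{M}(0)+t^2\|f^0\|^2$. Then $\mathcal{F}'\leqslant-c\,\mathcal{F}^2/(\mathcal{F}+\mathcal{M})$ only yields $(1/\mathcal{F})'\gtrsim\big(\mathcal{F}(0)+\mathcal{M}(0)+t^2\mathcal{F}(0)\big)^{-1}$. The right-hand side is integrable on $(0,\infty)$, so no decay follows. A bootstrap does not rescue this: assuming $\mathcal{F}\lesssim t^{-\epsilon}$ only improves the bound to $\mathcal{M}\lesssim t^{2-\epsilon}$, and $t^{\epsilon-2}$ is still integrable.

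What you need is the diffusive bound. Keep the term $-2\int|x|^2|\op{\nabla}_vf|^2$, and use \eqref{IntGradv} to write $\int_\S(v\cdot x)f^2\d v=\frac{2}{d-1}\int_\S f\,x\cdot\op{\nabla}_vf\d v$. This gives $\frac{\d}{\d t}\mathcal{M}\leqslant\frac{2}{(d-1)^2}\|f\|^2\leqslant\frac{2}{(d-1)^2}\mathcal{F}$. Set $\mathcal{U}:=\mathcal{F}(0)+\mathcal{M}(0)+\frac{2}{(d-1)^2}\int_0^t\mathcal{F}$. Then $\mathcal{F}+\mathcal{M}\leqslant\mathcal{U}$ and $\mathcal{U}''\mathcal{U}\leqslant-p\,\mathcal{U}'^2$ with $p=\frac{(d-1)^2c}{2}$. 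Integrating twice gives the power law, with $\epsilon$ determined by $p$.
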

  Some of the results of this theorem were already present in some way in other works. In~\cite{baudoin2018hypocoercive}, estimates similar to~\ref{short-time-KBM} were given with less optimal weights in time, in a more general case where the space is also a Riemannian manifold. In~\cite{cotizelati2023orientation}, the first two estimates of~\ref{short-time-KBM} were given in the case of the flat torus (it actually may be shown thanks to commutators that it implies the third one in that case) of dimension~$d=3$. They also present an estimate corresponding to~\ref{long-time-KBM-torus} (in~$L^2$ norm instead), where an additional enhanced dissipation effect is described: when the intensity of the Brownian motion in velocity is small (or equivalently when the size of the torus is small), effects due to mixing allow to improve the constant~$\lambda$. Let us insist that in our work, we are not interested in this kind of effect: we are in the regime corresponding to a noise above the threshold of phase transition, and we really want to understand the behaviour independently of the shape of the domain. This is why we focus on result on the whole~$\mathbb{R}^d$, such as~\ref{long-time-KBM-Nash} and~\ref{long-time-KBM-Heisenberg}. More comments regarding these two works are given in~\Cref{remark-baudoin-tardif,remark-CZDGV}. To our knowledge, the results~\ref{long-time-KBM-Nash} and~\ref{long-time-KBM-Heisenberg} in the whole space are new, as long as~\ref{short-time-KBM} and~\ref{long-time-KBM-torus} in the case~$d\geqslant4$. 

  Then we use higher order functionals, in the spirit of the seminal paper of Mouhot and Neumann~\cite{mouhot2006quantitative} to deal with the nonlinear alignment term in~\eqref{eq-VFP}. The fact that we do not need to have~$s>\frac{d}2$ to use~$L^\infty$ estimates through the standard Sobolev injections is due to the fact that the derivatives in space of the moments in velocity (the projection on spherical harmonics) are actually controled by a weaker norm than the~$H^{1,0}$ one, thanks to~\Cref{lemma-Sg2} which given in appendix. For instance, from~\Cref{thm-kinetic-Brownian-motion}--\ref{short-time-KBM}, we would obtain that~$\|\mathbb{J}[f](t)\|_{\dot{H}^{1}(\mathbb{R}^d)}\leqslant\frac{C}{t^{\frac32+\frac{\mu}4}}\|f^0\|_{L^2}$, where~$\mu=1$ when~$d=2$, $\mu$ can be arbitrarily chosen in~$(0,1]$ for~$d=3$, and~$\mu=0$ for~$d\geqslant4$. At the end, we may consider that we have obtained a gain of one fourth of a derivative (when~$d\geqslant3$), which is reminiscent to the results of kinetic averaging lemmas. We were not able though to make an explicit link with this theory of averaging lemmas.

      When considering the large time behaviour in the whole space, an additional difficulty came from the fact that we could not control the~$L^1$ norm of the perturbation, even at the linearized level, to be able to use an inequality of Nash type to obtain a result similar to~\Cref{thm-kinetic-Brownian-motion}--\ref{long-time-KBM-Nash}. This is why we needed an alternative result such as~\Cref{thm-kinetic-Brownian-motion}--\ref{long-time-KBM-Heisenberg}, that we proved thanks to Heisenberg’s uncertainty principle instead of Nash inequality, at the price of having a more complex (and second order) differential inequality to solve, and at having an exponent~$\epsilon$ in time which actually depends on the constant arising in the energy~--~energy-dissipation inequality, which is not the case when we use an a priori bound on the~$L^1$ norm.

      Let us now comment on two very recent works that are related to the present paper.

      In~\cite{merino2025stability}, the authors consider the localized Vicsek-BGK equation on the torus, for which the phase transition behaviour at the homogeneous level is similar to the case of Fokker--Planck (though the orientation of~$\mathbb{J}$ is preserved in time, which helps to describe the long-time behaviour). They prove that it is well-posed in~$H^{s,0}$ with~$s>\frac{d}2$, which is natural since we do not expect any regularizing effect from the BGK relaxation operator. Regarding the long-time behaviour, they prove nonlinear stability of the homegeneous states up to the threshold and even a little bit after the threshold (by a perturbation argument). However a crucial assumption is that they need a sufficiently large speed of the particle (or equivalently, a sufficiently small torus). We believe that our method can be used to prove similar results on the whole space~$\mathbb{R}^d$, when~$\rho<d$. Some preliminary computations indicate that, at least at the linearized level, we have a nonincreasing energy~$\mathcal{F}$ equivalent to the~$H^{1}$ norm, and that a quantitative decay in time such as in~\Cref{thm-kinetic-Brownian-motion}-\ref{long-time-KBM-Heisenberg} is available. A more precise study in this direction is left to a future work.

      In the very recent paper~\cite{gu2025mixing}, the authors consider the kinetic Vicsek equation on the torus with a slightly more general (but smooth) kernel of observation. They obtain similar results as~\cite{merino2025stability}, that may persist a little bit after the threshold of phase transition. They also study effects of enhanced diffusion, which corresponds to the regime where the interaction operator is very weak compared to the free transport (or equivalently, with a small torus or a high speed). The kernel of observation may be compactly supported in space, but the constants depend on this kernel, so the intensity of the collision operator have to be very small as the kernel is close to a Dirac mass, and we do not believe their approach may be easily extended to our localized equation.

      The rest of this paper is organised as follows. After a quick reminder of useful formulas on the sphere at the end of this introduction section, \Cref{sec-cauchy-l-infini} deals with the Cauchy problem, naturally set in the space~$L^\infty(\mathbb{R}^d\times\S)$, and describes the proof of~\Cref{thm-existence-uniqueness-Linfini} along the strategy of~\cite{briant2023well}. Then, in~\Cref{section-algebraic-framework}, we describe a convenient algebraic framework of operators which allows to rewrite the evolution equation~\eqref{eq-VFP} in a condensed form, and we give the main properties of commutations of these operators. \Cref{section-kinetic-BM-Hypo-Hypo}, we present the results of hypocoercivity and hypoellipticity by constructing the energy and energy dissipation functionals in the simpler model of the kinetic Brownian motion. \Cref{thm-kinetic-Brownian-motion} is proved through~\Cref{prop-decay-linear-H1,prop-short-time-kineticBM,prop-decay-linear-H1-M}. Next, in \Cref{section-well-posedness-Hs0}, we describe the higher order functionals and we obtain estimates coming from the nonlinear part of~\eqref{eq-VFP}, in order to prove \Cref{thm-well-posedness-Hs0}. Finally, we also use these estimates in \Cref{hypocoercivity-nonlinear} and some refined estimates coming from the perturbative setting, in order to obtain stability results for the energy functional, for short and long time in order to get \Cref{thm-stabilite-nonlineaire-Hs1}. The lemma used to control the spatial derivative of~$\mathbb{J}[f]$ is presented in~Appendix~\ref{section-appendix}.

\subsection{Useful formulas on the sphere}
In order to help a reader that would not be familiar with differential calculus on the sphere, we found relevant to recall some useful formulas (see \cite{frouvelle2012dynamics} for proofs) that we may use a lot without further notice. For smooth $f,g:\mathbb{S}\to\mathbb{R}$, $F:\mathbb{S}\to\mathbb{R}^d$, and any $\mathcal{J}\in\mathbb{R}^d$, one has:
\begin{equation}
  \nabla_v(v\cdot\mathcal{J})=P_{v^\perp}\mathcal J, \qquad \nabla_v\cdot(P_{v^\perp}\mathcal{J})=-(d-1)v\cdot\mathcal{J},\label{gradvdotJ}
\end{equation}
and
\begin{equation*}
  \int_\mathbb{S}F(v)\cdot\nabla_vf  \d v =-\int_\mathbb{S}\nabla_v\cdot(P_{v^\perp}F(v))f(v)\, \d v,%\label{IPPscalarSphere}
\end{equation*}
From there, one gets very important identities for what follows: 
\begin{gather}
  \int_\mathbb{S}\nabla_vg\,\d v =(d-1)\int_\mathbb{S} v \, g \,\d v,\label{IntGradv}\\
  \int_\mathbb{S}f \, \nabla_v g \,\d v= \int_\mathbb{S}g \, [(d-1)vf-\nabla_vf] \,\d v.\label{IPPSphere}
\end{gather}
We also have (for instance from~\eqref{IntGradv} with~$g=v\cdot\mathcal{J}$):
\begin{equation}
  |\mathcal{J}|^2=d\int_\S(v\cdot\mathcal{J})^2\d v=\frac{d}{d-1}\int_\S|P_{v^\perp}\mathcal{J}|^2\d v,\label{J2}
\end{equation}
and therefore we obtain that~$f-d\,v\cdot\mathbb{J}[f]$ and~$d\, v\cdot\mathbb{J}[f]$ are orthogonal in~$L^2(\S)$, which means that the orthogonal projection~$\op{\Pi}_1f$ on spherical harmonics of degree~$1$ (that is to say functions of the form~$v\mapsto\mathcal{J}\cdot v$) is given by
\begin{equation}\label{eq-def-Pi1}
  \op{\Pi}_1f(v)=d\, v\cdot\mathbb{J}[f],
\end{equation}
and  providing
\begin{equation}\label{J2fL2}
  |\mathbb{J}[f]|^2=\frac1d\int(d\,v\cdot\mathbb{J}[f])^2\,\d v\leqslant\frac1d\|f\|^2_{L^2(\S)}.
\end{equation}
  \section{Cauchy problem in~\texorpdfstring{$L^\infty$}{L∞}: existence, uniqueness, and finite speed propagation}
\label{sec-cauchy-l-infini}

This section is devoted to show how the strategy of the recent work~\cite{briant2023well} can be directly adapted to obtain~\Cref{thm-existence-uniqueness-Linfini}. The main addition (which were not explicitly written in~\cite{briant2023well} but quite straightforward from their proofs) are the explicit estimation~\eqref{eq-Linfinity-estimate} and the fact that the initial condition does not need to be in~$L^2$.

We recall their strategy here. First of all, using the same method as in~\cite[Appendix A]{degond1986global}, one may solve, for a prescribed $\mathcal{J}\in L^\infty([0,T]\times\mathbb{R}^d)$ and initial condition~$f^0\in L^2\cap L^\infty(\mathbb{R}^d\times\mathbb{S})$, the following linear equation:
\begin{equation}\partial_tf+v\cdot\nabla_xf+\nabla_v\cdot(P_{v^\perp}\mathcal{J} f)=\op{\Delta}_vf \label{eq-FP-Lin}.\end{equation}

Writing this equation as
\[\partial_tf+v\cdot\nabla_xf+\mathcal{J}\cdot\nabla_vf-\op{\Delta}_vf=(d-1)\left(v\cdot\mathcal{J}\right) \, f,\]
and considering the right-hand side as a source term, we obtain the following estimate that comes from the maximum principle:
\begin{equation}
  \label{maximum-principle}\|f(t)\|_\infty\leqslant\|f^0\|_\infty+(d-1)\int_0^t\||\mathcal{J}(s)|f(s)\|_\infty\,\d s,
\end{equation}
which gives, after using the Grönwall lemma (integral form), 
\begin{equation*}
    \|f(t)\|_\infty \leqslant \|f^0\|_\infty \exp \Big(  (d-1)\int_0^t\|\mathcal{J}(s)\|_\infty\,\d s\Big). 
  \end{equation*}
Furthermore if~$f^0$ is nonnegative, we obtain that~$f$ is also nonnegative.

From there, we shall use a fixed-point iteration. Indeed, since we have existence and uniqueness of a solution to~\eqref{eq-FP-Lin} in~$C([0,T],L^2\cap L^\infty(\mathbb{R}^d\times\mathbb{S}))$, a weak solution to our nonlinear model~\eqref{eq-VFP} corresponds to a weak solution~$f$ of~\eqref{eq-FP-Lin} satisfying furthermore~$\mathcal{J}=\mathbb{J}[f]$.

We therefore denote
\[\Omega=\Big\lbrace f\in L^\infty([0,T]\times\mathbb{R}^d\times\mathbb{S}), \quad \forall t\in[0,T], \qquad \|f(t,\cdot)\|_{\infty}\leqslant\frac1{\|f^0\|_\infty^{-1}-\frac{d-1}{\sqrt d}t}\Big\rbrace.\]
From there, and using~\eqref{J2fL2}, we get that if~$f_{n}\in \Omega$, then~$f_{n+1}$ solving~\eqref{eq-FP-Lin} with~$\mathcal{J}=\mathbb{J}[f_n]$ satisfies
\begin{align*}
  \|f_{n+1}(t)\|_\infty &\leqslant \|f^0\|_\infty \exp \left(  (d-1)\int_0^t\|\mathbb{J}[f_n](s)\|_\infty\,\d s\right) \\
                 &\leqslant \|f^0\|_\infty \exp \left( - \frac{d-1}{\sqrt d}\, \Big[\frac{\sqrt{d}}{d-1} \ln \big(\|f^0\|_\infty^{-1} - \frac{d-1}{\sqrt{d}} s\big)\Big]_0^t\right) = \frac1{\|f^0\|_\infty^{-1}-\frac{d-1}{\sqrt d}t},
\end{align*}
and thus we get that~$f_{n+1}\in\Omega$. By careful weak compactness, one may then extract a subsequence that converges to a weak solution~$f$, thus proving the existence.

For uniqueness, let us improve a little bit the result of~\cite{briant2023well}, using the fact that information propagates at speed less than $1$, in order to prove furthermore the existence when we do not suppose~$f^0\in L^2(\mathbb{R}^d\times\mathbb{S})$. We suppose $f$ and~$\widetilde{f}$ are two solutions in $C([0,T],L^\infty(\mathbb{R}^d\times\mathbb{S}))$, with initial conditions~$f^0$ and~$\widetilde{f}^0$ in~$L^\infty(\mathbb{R}^d\times\mathbb{S})$. Denoting~$u=f-\widetilde{f}$ their difference, we obtain
\[\partial_t u + v\cdot\nabla_xu=\op{\Delta}_vu-\nabla_v\cdot(P_{v^\perp}\mathbb{J}[f]u)-\nabla_v\cdot(P_{v^\perp}\mathbb{J}[u]\widetilde{f}).\]
Let~$\varphi$ be a smooth nonnegative compactly supported function of~$x$ and~$t$, we compute
\begin{equation}
  \label{ddtphiu2}
  \frac12 \frac{\d}{\d t}\int_{\mathbb{R}^d\times\mathbb{S}}\varphi u^2 = \int_{\mathbb{R}^d\times\mathbb{S}}(\partial_t\varphi+v\cdot\nabla_x\varphi)u^2-\varphi\Big(|\nabla_vu|^2+\tfrac{d-1}2u^2\mathbb{J}[f]\cdot v-\widetilde{f}\mathbb{J}[u]\cdot\nabla_vu\Big).
\end{equation}
We now take $\varphi(x,t)=\chi(|x-x_0|+t)$ where~$\chi$ is a smooth non-increasing and nonnegative function, in order to obtain~$\partial_t\varphi+v\cdot\nabla_x\varphi=(1+v\cdot\frac{x-x_0}{|x-x_0|})\chi'(|x-x_0|+t)\leqslant0$, since~$|v|\leqslant1$. 

By denoting $C(t)=\max\big(\|f(t,\cdot)\|_\infty,\|\widetilde{f}(t,\cdot)\|_\infty\big)$, we have that~$\widetilde{f}\mathbb{J}[u]\cdot\nabla_vu\leqslant\frac14C(t)^2|\mathbb{J}[u]|^2+|\nabla_vu|^2$, and therefore using~\eqref{J2fL2}, we obtain
\[
  \frac12 \frac{\d}{\d t}\int_{\mathbb{R}^d\times\mathbb{S}}\varphi u^2\leqslant(\tfrac{d-1}2C(t)+ \tfrac1{4d}C(t)^2)\int_{\mathbb{R}^d\times\mathbb{S}}\varphi u^2.
\]
Solving this Grönwall estimate, we get $\int_{\mathbb{R}^d\times\mathbb{S}}\varphi u^2\leqslant\widetilde{C}(t)\int_{\mathbb{R}^d\times\mathbb{S}}\varphi^0(u^0)^2$. Taking now~$\chi(r)=0$ for $r\geqslant R$ and~$\chi(r)=1$ for~$r\in[0,R-\varepsilon]$, we have in the limit
\begin{equation}
  \label{uniqueness-estimate-L2}
  \int_{B(x_0,R-t)\times\mathbb{S}}|f-\widetilde{f}|^2\leqslant\widetilde{C}(t)\int_{B(x_0,R)\times\mathbb{S}}|f^0-\widetilde{f}^0|^2.
\end{equation}
This gives the uniqueness of the solution, and furthermore provides a way to define a solution for any initial condition in $L^\infty([0,T]\times\mathbb{R}^d\times\mathbb{S})$. We proceed by truncating: if $\widetilde{f}^0=\mathbf{1}_{|x-x_0|\leqslant R+T}f^0$, we set~$\widetilde{f}$ the solution with initial condition~$\widetilde{f}^0$ and we get that if~$f$ is a solution with initial condition~$f^0$, it should satisfy $f(t,v,x)=\widetilde{f}(t,v,x)$ for $|x-x_0|\leqslant R$. This gives the definition of~$f$ for any~$x$ on~$[0,T]$ by taking~$R$ sufficiently large (we recall that~$T$ only depends on the initial $L^\infty$ norm of~$f^0$ and therefore~$\widetilde{f}$ is well-defined on~$[0,T]$ for any~$R$). We directly check that it gives a weak solution, since the test functions have compact support, using the fact that~$\widetilde{f}$ is itself a weak solution.

Finally, to prove the estimate~\eqref{eq-Linfinity-estimate}, we start by proving that if~$\widetilde{f}$ is the solution to the linear equation~\eqref{eq-FP-Lin} with initial condition~$\widetilde{f}^0=\mathbf{1}_{|x-x_0|\leqslant t_0}f^0$ and with~$\mathcal{J}(t,x)=\mathbf{1}_{|x-x_0|+t\leqslant t_0}\mathbb{J}[f](t,x)$, then $\widetilde{f}$ and the solution~$f$ to the nonlinear problem~\eqref{eq-VFP} agree on the set~$\{|x-x_0|+t\leqslant t_0\}=B_{x_0,t_0-t}\times\S$. To this aim, we proceed very similarly as for the uniqueness. The computations are similar, still denoting~$u=f-\widetilde{f}$, and~$\varphi=\chi(|x-x_0|+t)$, with $\chi$ smooth, nonincreasing, such that~$\chi(r)=0$ for~$r\geqslant t_0$ and~$\chi(r)=1$ for~$r\in[0,t_0-\varepsilon]$. We end up with the same equation as~\eqref{ddtphiu2} where~$\mathbb{J}[u]$ is replaced by~$(\mathbb{J}[f]-\mathcal{J})$. But since this last quantity is zero on the support of~$\varphi$, we do not even have to worry about this term and the same strategy leads to the same estimate as~\eqref{uniqueness-estimate-L2} with~$t_0$ instead of~$R$.

We can then directly use the estimate~\eqref{maximum-principle} for~$\widetilde{f}$, and we get
\begin{align*}
  \|f(t)\|_{\infty,B(x_0,t_0-t)\times\S}&=\|\widetilde{f}(t)\|_{\infty,B(x_0,t_0-t)\times\S}  \leqslant\|\widetilde{f}(t)\|_{\infty}\\
  &\leqslant\|\widetilde{f}^0\|_\infty+(d-1)\int_0^t\||\mathcal{J}(s)|\widetilde{f}(s)\|_\infty\,\d s,\\
  &\leqslant\|f^0\|_{\infty,B(x_0,t_0)\times\S}+(d-1)\int_0^t\|\mathbb{J}[f](s)\|_{\infty,B(x_0,t_0-s)\times\S}\|f(s)\|_{\infty,B(x_0,t_0-s)\times\S}\,\d s,
\end{align*}
therefore letting~$t\to t_0$ and using the continuity of~$t\mapsto\|f(t)\|_\infty$, we obtain the desired estimate~\eqref{eq-Linfinity-estimate}, which ends the proof of~\Cref{thm-existence-uniqueness-Linfini}.

\section{Algebraic framework: angular momentum operator and \texorpdfstring{$2$}{2}-vectors}
\label{section-algebraic-framework}

The object of this section is to rewrite our kinetic equation~\eqref{eq-VFP} under the following condensed form:
\begin{equation}
  \label{eq-VFP-condensed}
  \partial_tf+\op{T}f+\frac1d\,\op{A}(f\op{U}f)=\op{A}^2f,
\end{equation}
where~$\op{T}=v\cdot\nabla_x$ is the transport operator,~$\op{A}$ is an angular momentum operator and~$\op{U}$ is a linear operator related to the alignment term. The reason we introduce these operators~$\op{A}$ and~$\op{U}$ rely on their useful properties in terms of antisymmetry and commutation behaviour, summarized in the forthcoming~\Cref{lemmeA2,lemmeAU}.

These operators act on functions from~$\S$ to $\mathbb{R}$ and return functions from~$\S$ to~$\bigwedge^2(\mathbb{R}^d)$, the space of so-called $2$-vectors, which is simply a euclidean space with orthonormal basis denoted~$(e_i\wedge e_j)_{1\leqslant i<j\leqslant d}$, where~$(e_i)_{1\leqslant i\leqslant d}$ is the canonical basis of $\R^d$. There is a natural bilinear and antisymmetric map called exterior product, from~$\mathbb{R}^d\times\mathbb{R}^d$ to~$\bigwedge^2(\mathbb{R}^d)$, given, for two vectors~$u=(u_i)$ and~$w=(w_i)$ in~$\mathbb{R}^d$, by
\begin{equation*}
  u\wedge w=\sum_{i<j}(u_iw_j-u_jw_i)\,e_i\wedge e_j,%\label{def-wedge}
\end{equation*}
which is compatible with the notation~$e_i\wedge e_j$ for the elements of the basis.
\begin{remark}\label{remark-dot-exterior} An useful observation with respect to the (bilinear, antisymmetric) exterior product is the following: if~$v,w,\tilde{v},\tilde{w}$ are four vectors of~$\mathbb{R}^d$, a direct computation shows that
\[(v\wedge w)\cdot(\tilde{v}\wedge\tilde{w})=\sum_{i<j}(v_iw_j-v_jw_i)(\tilde{v}_i\tilde{w}_j-\tilde{v}_j\tilde{w}_i)=\sum_{i\neq j}v_i\tilde{v}_iw_j\tilde{w}_j-v_i\tilde{w}_i\tilde{v}_jw_j=(v\cdot\tilde{v})(w\cdot\tilde{w})-(v\cdot\tilde{w})(\tilde{v}\cdot w).\]
Consequently, if~$v\in\S$ and~$v\cdot w=0$ or~$v\cdot\tilde{w}=0$, then~$(v\wedge w)\cdot(v\wedge\tilde{w})=w\cdot\tilde{w}$.
\end{remark}

We are now ready to define our linear operators~$\op{A}$ and~$\op{U}$, which have very concise forms once we have introduced this convenient framework.
\begin{definition}
  \label{def-A-U}
  For a smooth function~$f$ on~$\S$, we define~$\op{A}f$ and~$\op{U}f$ by
  \begin{equation*}
    \forall v\in\S,\qquad (\op{A}f)(v)=v\wedge\nabla_vf(v),\qquad (\op{U}f)(v)=d\,v\wedge\mathbb{J}[f].
  \end{equation*}
\end{definition}
We may describe~$\op{A}$ component-wise as follows. We denote~$v_i$ the function~$v\in\S\mapsto v \cdot e_i$. Then, for a smooth function~$f$ on~$\S$, we have~$\op{A}f=\sum_{i<j}\op{A}_{i,j}f \,(e_i\wedge e_j),$ where for $i\neq j$, $\op{A}_{i,j}$ is the following linear operator of degree~$1$ on the sphere:
\begin{equation}
    \op{A}_{i,j}=v_i \, e_j \cdot \nabla_v- v_j\,e_i \cdot \nabla_v,\label{def-Aij}
\end{equation}
which corresponds to the infinitesimal generator of the rotation in the plane generated by~$e_i$ and~$e_j$, sending~$e_i$ onto~$e_j$. Said differently, if we write $f(v)=\widetilde{f}(\varphi,w)$ when~$v=\sin\theta(\cos\varphi e_i+\sin\varphi e_j)+\cos\theta\,w$ where~$w\in\S\cap\{e_i,e_j\}^\perp$, then $\op{A}_{i,j}f(v)=\partial_\varphi\widetilde{f}(\varphi,w)$ (in particular in dimension~$d=2$, we recover the setting given in the introduction, with~$\op{A}=\partial_\theta$ for~$v=\binom{\cos\theta}{\sin\theta}$).

In all what follows, we will only consider scalar operators or operators with values in~$\bigwedge^2(\mathbb{R}^d)$ (such as~$\op A$ and~$\op{U}$), and any composition~$\op B\widetilde{\op B}$ between two such objects has to be understood as their (scalar) contraction\footnote{When more than one composition is involved, to avoid ambiguity, we use parentheses to depict the order of contractions; for instance~$\op B \widetilde{\op B}^2\op B=\sum_{i<j}\sum_{k<\ell}\op B_{i,j}  \widetilde{\op B}_{k,\ell}^2\op B_{i,j}$ while $(\op B \widetilde{\op B})(\widetilde{\op B}\op B)=\sum_{i<j}\sum_{k<\ell}\op B_{i,j}  \widetilde{\op B}_{i,j}\widetilde{\op B}_{k,\ell}\op B_{k,\ell}$.}~$\sum_{i<j}\op B_{i,j}\widetilde{\op B}_{i,j}$. In particular, the terms appearing in~\eqref{eq-VFP-condensed} have to be understood as
\[\op{A}^2f=\sum_{i<j}\op{A}_{i,j}^2f,\qquad\op{A}(f\op{U}f)=\sum_{i<j}\op{A}_{i,j}(f\op{U}_{i,j}f).\]

The useful properties regarding~$\op{A}$, the gradient and the Laplace--Beltrami operators on the sphere are given in the following lemma. 

\begin{lemma}\label{lemmeA2}
Fixing~$1\leqslant i,j,k,\ell\leqslant d$ with $i\neq j$ and~$k\neq\ell$, we have the following results.
  \begin{enumerate}[label=(\roman*)]
    \item The operator~$\op{A}_{i,j}$ is antiselfadjoint on~$\mathbb{S}$.\label{Antiself}
    \item If~$f,g\in C^1(\mathbb{S})$, then~$\nabla_vf \cdot \nabla_vg=\op{A}f\cdot\op{A}g$. Consequently if~$f\in C^2(\mathbb{S})$, then~$\op{\Delta}_vf=\op{A}^2f$.\label{A2Delta}
    \item We have~$\op{A}_{i,j}v_k=\delta_{jk}v_i-\delta_{ik}v_j$.\label{comAijvk}
    \item If $\{i,j\}\cap\{k,\ell\}=\emptyset$, then $[\op{A}_{i,j},\op{A}_{k,\ell}]=0$. If~$k\notin\{i,j\}$, then~$[\op{A}_{i,j},\op{A}_{k,j}]=-\op{A}_{i,k}$. As a consequence, we have that~$[\op{A}_{i,j},\op{A}^2]=0$, which means that~$\op{A}$ and~$\op{\Delta}_v$ commute.\label{comAijAkl}
    \end{enumerate}
  \end{lemma}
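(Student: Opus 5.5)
The cleanest route is to view each $\op{A}_{i,j}$ as the restriction to $\S$ of the Euclidean vector field $x_i\partial_j-x_j\partial_i$ on $\mathbb{R}^d$. This field is tangent to the sphere, since it is orthogonal to $x$. For a function $f$ on $\S$ we use its $0$-homogeneous extension $\bar f(x)=f(x/|x|)$. Then $\nabla\bar f(v)=\nabla_vf(v)$ on $\S$, so \eqref{def-Aij} is exactly this vector field applied to $\bar f$. We prove the four items in the order \ref{comAijvk}, \ref{comAijAkl}, \ref{Antiself}, \ref{A2Delta}.

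For \ref{comAijvk}, we compute directly: $\nabla_v v_k=P_{v^\perp}e_k=e_k-v_kv$ by \eqref{gradvdotJ}. Hence
\[
\op{A}_{i,j}v_k=v_i(\delta_{jk}-v_kv_j)-v_j(\delta_{ik}-v_kv_i)=\delta_{jk}v_i-\delta_{ik}v_j.
\]

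For \ref{comAijAkl}, we work with the vector fields $X_{ij}=x_i\partial_j-x_j\partial_i$ on $\mathbb{R}^d$. They are tangent to $\S$ and preserve $0$-homogeneity, so commutators on $\S$ equal commutators of the $X_{ij}$. The standard Lie bracket computation gives $[X_{ij},X_{k\ell}]=0$ when the index sets are disjoint. For $k\notin\{i,j\}$ it gives $[X_{ij},X_{kj}]=x_i\partial_k-x_k\partial_i=-X_{ik}$ (checked by expanding; the sign must match the paper's convention $\op{A}_{i,k}=v_i\partial_k-v_k\partial_i$). Then $[\op{A}_{i,j},\op{A}^2]=\sum_{k<\ell}\big([\op{A}_{i,j},\op{A}_{k,\ell}]\op{A}_{k,\ell}+\op{A}_{k,\ell}[\op{A}_{i,j},\op{A}_{k,\ell}]\big)$. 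Only pairs sharing exactly one index with $\{i,j\}$ contribute. These pair up: for each $k\notin\{i,j\}$, the pairs $\{k,j\}$ and $\{k,i\}$ give $-\op{A}_{i,k}\op{A}_{k,j}-\op{A}_{k,j}\op{A}_{i,k}$ and, using antisymmetry $\op{A}_{a,b}=-\op{A}_{b,a}$, a term of the form $+\op{A}_{k,j}\op{A}_{i,k}+\op{A}_{i,k}\op{A}_{k,j}$. These cancel. The pair $\{i,j\}$ itself commutes trivially. This sign bookkeeping is the only delicate point, and we handle it by writing $\op{A}^2=\frac12\sum_{k\neq\ell}\op{A}_{k,\ell}^2$.

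For \ref{Antiself}, $X_{ij}$ is divergence free in $\mathbb{R}^d$ and generates rotations. The Lebesgue measure on $\S$ is rotation invariant, so $\int_\S \op{A}_{i,j}(fg)\,\d v=\frac{\d}{\d\varphi}\big|_{0}\int_\S (fg)(R_\varphi v)\,\d v=0$. This yields antiselfadjointness. Alternatively we can use \eqref{IPPSphere} with the vector $F=v_ie_j-v_je_i$, noting that $\nabla_v\cdot(P_{v^\perp}F)=0$.

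For \ref{A2Delta}, we apply \Cref{remark-dot-exterior}: since $v\cdot\nabla_vf=0$, we get $(v\wedge\nabla_vf)\cdot(v\wedge\nabla_vg)=\nabla_vf\cdot\nabla_vg$. Then for any test function $g$, using \ref{Antiself},
\[
\int g\,\op{A}^2f=-\int\op{A}g\cdot\op{A}f=-\int\nabla_vg\cdot\nabla_vf=\int g\,\op{\Delta}_vf,
\]
so $\op{A}^2=\op{\Delta}_v$. The commutation of $\op{A}$ with $\op{\Delta}_v$ then follows from \ref{comAijAkl}.
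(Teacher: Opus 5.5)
Your proof is correct. Items (ii) and (iii) are argued exactly as in the paper, and so is the final summation in (iv): for each $k\notin\{i,j\}$, the contributions of $\op{A}_{k,j}^2$ and $\op{A}_{k,i}^2$ cancel.

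The route differs in (i) and in how you obtain the commutation relations of (iv).

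For (i), the paper applies \eqref{IPPSphere} to $v_ie_j\cdot\nabla_v$ and to $v_je_i\cdot\nabla_v$ separately. It then observes that the zero-order terms, being symmetric in $(i,j)$, cancel. Your rotation-invariance argument, $\int_\S\op{A}_{i,j}(fg)\,\d v=\frac{\d}{\d\varphi}\big|_{\varphi=0}\int_\S(fg)(R_\varphi v)\,\d v=0$, is shorter and more conceptual. Your alternative with the tangent field $F=v_ie_j-v_je_i$ is the paper's computation in condensed form. It has zero divergence because, by \eqref{gradvdotJ}, $\nabla_v\cdot(v_iP_{v^\perp}e_j)=\delta_{ij}-d\,v_iv_j$ is symmetric in $i,j$.

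For (iv), the paper stays intrinsic on the sphere. Commutators of first-order operators are again first-order, so it suffices to test the identities on the coordinate functions $v_m$, then on polynomials, then conclude by density. Everything thus reduces to (iii). You instead lift to $\mathbb{R}^d$ through the $0$-homogeneous extension. This is legitimate because the linear fields $X_{ij}$ commute with dilations, so $X_{ij}\bar f=\overline{\op{A}_{i,j}f}$. You then read off the standard $\mathfrak{so}(d)$ relations from the Euclidean bracket. At heart the two are the same computation, since a linear vector field is determined by its action on coordinate functions. The paper's version needs no extension, while yours makes explicit that the $\op{A}_{i,j}$ are the rotation generators.

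One slip needs correcting. You write $[X_{ij},X_{kj}]=x_i\partial_k-x_k\partial_i=-X_{ik}$, but $x_i\partial_k-x_k\partial_i$ is $X_{ik}$ itself. Expanding with $[x_a\partial_b,x_p\partial_q]=\delta_{bp}\,x_a\partial_q-\delta_{qa}\,x_p\partial_b$ gives $[X_{ij},X_{kj}]=x_k\partial_i-x_i\partial_k=-X_{ik}$. So your final identity, and hence the cancellation in your sum, is right; only the intermediate expression has the wrong sign.
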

  Before proving this lemma, let us remark that the first two properties~\ref{Antiself}--\ref{A2Delta} (together with~\Cref{remark-dot-exterior}) directly justify the condensed formula~\eqref{eq-VFP-condensed} of our kinetic equation~\eqref{eq-VFP}, as for any smooth function $g$ and~$f$, we have 
  \begin{align*}
    \int_\S g\sum_{i<j}\frac{1}d\,&\op{A}_{i,j}(f\op{U_{i,j}}f)\,\d v=-\int_\S\frac{1}d\,\sum_{i<j}\op{A}_{i,j}gf\op{U}_{i,j}f\,\d v=-\int_\S\frac{1}d\,\op{A}g\cdot f\op{U}f\,\d v\\
    &=-\int_\S(v\wedge\nabla_vg)\cdot(v\wedge\mathbb{J}[f]f)\,\d v=-\int_\S\nabla_vg\cdot\mathbb{J}[f]f\,\d v=\int_\S g\nabla_v\cdot(P_{v^\perp}\mathbb{J}[f]f)\,\d v.
  \end{align*}
  \begin{proof}[\textbf{Proof of~\Cref{lemmeA2}}]
    We start with the proof of~\ref{Antiself}. For smooth~$f$ and~$g$, using the integration by parts~\eqref{IPPSphere}, we have
    \begin{align*}
    \int_{\S}fv_i e_j\cdot\nabla_vg \, \d v& = e_j \cdot \int_{\S}g\, [(d-1)vfv_i-\nabla_v(f v_i)] \,\d v \\
    &= \int_\S(d-1)(v_iv_j-e_j\cdot P_{v^\perp}e_i)fg \, \d v - \int_{\S}gv_i e_j\cdot\nabla_vf \, \d v.
    \end{align*}
    And therefore we get, using the same computation and exchanging~$i$ and~$j$,
    \[\int_{\S}f \,\op{A}_{i,j}g \, \d v=- \int_{\S}g \,v_i e_j\cdot\nabla_vf \, \d v+ \int_{\S}g\,v_j e_i\cdot\nabla_vf \, \d v=-\int_{\S}g \,\op{A}_{i,j}f \, \d v.\]
    The first part of~\ref{A2Delta} comes from~\Cref{def-A-U} and~\Cref{remark-dot-exterior}. The next part comes from~\ref{Antiself}, as we then have, for all smooth~$g$,
    \[\int_{\S}g\op{\Delta}_v f \, \d v = -\int_{\S}\nabla_vg\cdot\nabla_v f \, \d v =   -\int_{\S}\op{A}g\cdot\op{A} f \, \d v = \int_{\S}g\cdot\op{A}^2 f \, \d v.\]
    The next point~\ref{comAijvk}, using the fact thanks to~\eqref{gradvdotJ} that $e_i\cdot\nabla_v(v_j)=e_i\cdot P_{v^\perp}e_i=\delta_{ij}-v_iv_j$, is a direct computation from~\eqref{def-Aij}.
   To prove the first part of~\ref{comAijAkl}, which are equalities on first order operators, we only need to prove their validity when applied to the functions~$(v_m)_{1\leqslant m\leqslant d}$. Indeed if two first order linear operators~$\op{B}$ and~$\widetilde{\op{B}}$ agree on such functions, then by induction they agree on any polynomial $P(v)$ of these functions thanks to the fact that \[\op{B}(v_mP(v))=(\op{B}v_m)P(v)+v_m\op{B}(P(v))=(\widetilde{\op{B}}v_m)P(v)+v_m\widetilde{\op{B}}(P(v))=\widetilde{\op{B}}(v_mP(v)).\]
   Therefore by density they agree on any smooth function.

   Finally, we have, thanks to~\ref{comAijvk},
   \[ [\op{A}_{i,j},\op{A}_{k,\ell}]v_m=\op{A}_{i,j}(\delta_{\ell m}v_k-\delta_{km}v_\ell)-\op{A}_{k,\ell}(\delta_{jm}v_i-\delta_{im}v_j),\]
   which is zero thanks to~\ref{comAijvk} again whenever~$\{i,j\}\cap\{k,\ell\}=\emptyset$. Taking~$\ell=j$ and~$k\notin\{i,j\}$, we get
   \[ [\op{A}_{i,j},\op{A}_{k,j}]v_m=-\delta_{km}v_i+\delta_{im}v_k=-\op{A}_{i,k}v_m,\]
   which proves the first part of~\ref{comAijAkl}. We then have, for any $1 \leqslant  i < j \leqslant d$ and $1 \leqslant k < \ell \leqslant d$,
\[ [\op{A}_{i,j},\op{A}_{k,\ell}^2]=\op{A}_{k,\ell}[\op{A}_{i,j},\op{A}_{k,\ell}] + [\op{A}_{i,j},\op{A}_{k,\ell}]\op{A}_{k,\ell},\]
which gives immediately that $[\op{A}_{i,j},\op{A}_{k,\ell}^2] = 0$ when~$\{k,\ell\}\cap\{i,j\}=\emptyset$, and we therefore get, since~$\op{A}_{k,\ell}^2=\op{A}_{\ell,k}^2$,
% n(n-1)/2 (k<\ell)- (n-2)(n-1)/2 (k<\ell, both \notin{i,j}) - 1 (k=i,\ell=j) terms = 2n-4 terms 
\begin{align*}
   [\op{A}_{i,j},\op{A}^2]&=\sum_{k<\ell}\,[\op{A}_{i,j},\op{A}_{k,\ell}^2]=\sum_{m\notin\{i,j\}}[\op{A}_{i,j},\op{A}_{i,m}^2]+[\op{A}_{i,j},\op{A}_{m,j}^2]\\
&=\sum_{m\notin\{i,j\}} \op{A}_{i,m}[\op{A}_{i,j},\op{A}_{i,m}] + [\op{A}_{i,j},\op{A}_{i,m}]\op{A}_{i,m} + \op{A}_{m,j}[\op{A}_{i,j},\op{A}_{m,j}] + [\op{A}_{i,j},\op{A}_{m,j}]\op{A}_{m,j}, \\
&=\sum_{m\notin\{i,j\}} -\op{A}_{i,m}\op{A}_{j,m} - \op{A}_{j,m}\op{A}_{i,m} - \op{A}_{m,j}\op{A}_{i,m} - \op{A}_{i,m}\op{A}_{m,j}=0. \qedhere
 \end{align*}
 \end{proof} 

We now turn to properties related to the linear operator~$\op{U}$, which may also be described component-wise, by 
\[\op{U}_{i,j}f= d \left(v_i \, e_j \cdot \mathbb{J}[f] - v_j  \, e_i\cdot \mathbb{J}[f] \right). \]

\begin{lemma}\label{lemmeAU} Fixing~$1\leqslant i,j,k\leqslant d$ with~$i\neq j$, we have the following properties.
  \begin{enumerate}[label=(\roman*)]
  \item The operator~$\op{U}_{i,j}$ is antiselfadjoint on~$\S$. \label{Ustar}
    \item We have~$\op{U}_{i,j}v_k=\delta_{jk}v_i-\delta_{ik}v_j$. \label{Uijvk}
    \item If~$f\in C^1(\S)$, then $\op{U}^2f=\op{AU}f=\op{UA}f=-(d-1)d\,v \cdot \mathbb{J}[f]$, and~$d(d-1)|\mathbb{J}[f]|^2=\|\op{U}f\|_2^2\leqslant\|\op{A}f\|_2^2$.\label{U2}
      \item The operator~$\op{U}_{i,j}$ commutes with~$\op{U}^2$ and we have \label{AU2}
\[
\op{AU}^2=\op{U}^2\op{A}=\op{A}^2\op{U}=\op{UA}^2=\op{U}^3=-(d-1)\op{U}.
\]
  \end{enumerate}
\end{lemma}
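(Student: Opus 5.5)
The plan is to verify everything on the finite-dimensional space of degree-one harmonics, since $\op{U}$ only sees $\mathbb{J}[f]$, and to reduce each identity to the computations \ref{comAijvk} of \Cref{lemmeA2} and \eqref{J2}.

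\textbf{Points \ref{Ustar} and \ref{Uijvk}.} For \ref{Ustar}, I write
\[\int_\S g\,\op{U}_{i,j}f\,\d v=d\big(\mathbb{J}[g]_i\mathbb{J}[f]_j-\mathbb{J}[g]_j\mathbb{J}[f]_i\big).\]
This expression is antisymmetric in $(f,g)$, which gives antiselfadjointness. For \ref{Uijvk}, I first compute $\mathbb{J}[v_k]=\int_\S v\,v_k\,\d v=\frac1d e_k$, using \eqref{J2} and the fact that $\int_\S v_iv_k\,\d v=\frac1d\delta_{ik}$ by symmetry. Substituting gives $\op{U}_{i,j}v_k=v_i\delta_{jk}-v_j\delta_{ik}$.

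\textbf{Point \ref{U2}.} The key observation is that $\op{U}$ factors through $\op{\Pi}_1$, namely $\op{U}f=\op{U}\op{\Pi}_1f$. By \ref{Uijvk} and \ref{comAijvk} of \Cref{lemmeA2}, the operators $\op{U}_{i,j}$ and $\op{A}_{i,j}$ coincide on the span of the $v_k$. Hence $\op{U}^2f=\op{A}\op{U}f$, since $\op{U}f$ has components in that span.

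It remains to compute $\op{A}^2$ on a linear function $v\cdot\mathcal{J}$. By \Cref{lemmeA2}\ref{A2Delta} this equals $\op{\Delta}_v(v\cdot\mathcal{J})=-(d-1)v\cdot\mathcal{J}$, using \eqref{gradvdotJ}. With $\mathcal{J}=d\,\mathbb{J}[f]$ this gives $\op{U}^2f=-(d-1)d\,v\cdot\mathbb{J}[f]$.

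For $\op{U}\op{A}f=\sum\op{U}_{i,j}\op{A}_{i,j}f$, I use $\mathbb{J}[\op{A}_{i,j}f]=-\int_\S f\,\op{A}_{i,j}v\,\d v$ by antiselfadjointness. This expresses it as $\op{U}$ applied to $\op{\Pi}_1$-data, so $\op{U}\op{A}f=\op{U}\op{A}\op{\Pi}_1f=\op{U}^2\op{\Pi}_1f$, which is the same value.

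For the norm, I combine $\|\op{U}f\|_2^2=-\int f\,\op{U}^2f=d(d-1)|\mathbb{J}[f]|^2$ with \eqref{J2}. The inequality $\|\op{U}f\|_2\leqslant\|\op{A}f\|_2$ then follows because
\[\|\op{U}f\|^2=-\langle f,\op{A}^2\op{\Pi}_1f\rangle=\|\op{A}\op{\Pi}_1f\|^2\leqslant\|\op{A}f\|^2,\]
where the last step uses that $\op{A}^2=\op{\Delta}_v$ commutes with $\op{\Pi}_1$ and is nonpositive.

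\textbf{Point \ref{AU2}.} Here $\op{U}^2f=-(d-1)\op{\Pi}_1f$. So $\op{U}_{i,j}$ commutes with $\op{U}^2$ because $\op{U}_{i,j}$ maps into and factors through degree-one harmonics. Each listed composition is then $-(d-1)$ times $\op{U}$ restricted to degree one:
\begin{itemize}
\item $\op{A}\op{U}^2=\op{U}\op{U}^2$ on that span;
\item $\op{U}^2\op{A}f=-(d-1)\op{\Pi}_1\op{A}f$, and $\op{\Pi}_1\op{A}=\op{A}\op{\Pi}_1$ (as $\op{A}$ commutes with $\op{\Delta}_v$), which equals $\op{U}\op{\Pi}_1f=\op{U}f$ on degree one;
\item $\op{A}^2\op{U}f=\op{\Delta}_v$ applied to degree-one functions;
\item $\op{U}\op{A}^2f=\op{U}\op{\Pi}_1\op{\Delta}_vf$.
\end{itemize}

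\textbf{Main obstacle.} The delicate step is the commutation $\op{\Pi}_1\op{A}_{i,j}=\op{A}_{i,j}\op{\Pi}_1$, together with the precise contraction conventions. I would justify the commutation from \Cref{lemmeA2}\ref{comAijAkl}: $\op{A}_{i,j}$ commutes with $\op{\Delta}_v$, hence preserves its eigenspaces, and $\op{\Pi}_1$ is the orthogonal projection on the eigenspace for the eigenvalue $-(d-1)$.
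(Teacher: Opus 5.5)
Your proof is correct, but it is organized differently from the paper's. Points (i) and (ii) coincide with the paper.

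For (iii) and (iv), the paper stays componentwise and relies on antiselfadjointness. It computes $\op{U}_{i,j}^2f$ and $\op{A}_{i,j}\op{U}_{i,j}f$ explicitly to get $\op{A}\op{U}=\op{U}^2$, deduces $\op{U}\op{A}=\op{U}^2$ by taking adjoints, and gets the inequality in one line from $\|\op{U}f\|_2^2=\langle\op{A}f,\op{U}f\rangle$ and Cauchy--Schwarz. In (iv) it computes $\op{A}_{i,j}\op{U}^2$, $\op{A}^2\op{U}_{i,j}$, $\op{U}_{i,j}\op{U}^2$ and $\op{U}^2\op{U}_{i,j}$ directly, since the last-applied factor always acts on $\mathrm{span}\{v_k\}$, where point (iii) of \Cref{lemmeA2} and point (ii) apply. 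It then transfers these to $\op{U}^2\op{A}$ and $\op{U}\op{A}^2$ by adjunction.

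You instead build everything on the factorization $\op{U}_{i,j}=\op{A}_{i,j}\op{\Pi}_1$, which the paper only mentions as a heuristic before its proof. You combine it with the commutation of $\op{\Pi}_1$ with $\op{A}_{i,j}$ and $\op{\Delta}_v$, so the inequality becomes the spectral statement $\|\op{A}\op{\Pi}_1f\|_2\leqslant\|\op{A}f\|_2$. Your version explains why every composition in (iv) collapses to $-(d-1)\op{U}$ and makes explicit the identity $\op{U}f=\op{A}\op{\Pi}_1f$. The paper's version is purely algebraic and needs no spherical-harmonic decomposition.

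Two small remarks. First, for $f\in C^1$ your inequality step should not apply $\op{\Delta}_v$ to $f$. It suffices to expand $\|\op{A}f\|_2^2$ along $f=\op{\Pi}_1f+(f-\op{\Pi}_1f)$; the cross term is a multiple of $\langle\op{\Pi}_1f,f-\op{\Pi}_1f\rangle=0$. Second, the step you flag as the main obstacle is easier than you suggest. By point (iii) of \Cref{lemmeA2}, $\op{A}_{i,j}$ maps $\mathrm{span}\{v_k\}$ into itself, and being antiselfadjoint it also preserves the orthogonal complement. Hence it commutes with $\op{\Pi}_1$ without appealing to the commutation with $\op{\Delta}_v$. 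Your own computation of $\mathbb{J}[\op{A}_{i,j}f]$ in (iii) already shows this, and it is exactly where the paper's adjoint argument sits inside your approach.
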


Before the proof, let us remark that the property~\ref{Uijvk} corresponds exactly to~\Cref{lemmeA2}--\ref{comAijvk}. This motivates the introduction of the factor~$d$ in~\Cref{def-A-U} of~$\op{U}$. Since~$\mathbb{J}[f]=0$ for any spherical harmonic of degree different from~$1$, this operator~$\op{U}$ is therefore the composition of~$\op{A}$ and the projection~$\op{\Pi}_1$ on spherical harmonics of degree~$1$, which also enlightens the last two properties of this lemma.

\begin{proof}[\textbf{Proof of~\Cref{lemmeAU}}] For~$f,g\in C^1(\S)$, we have~$\int f\op{U}g=d \, \mathbb{J}[f] \wedge \mathbb{J}[g]$, which gives~\ref{Ustar}.
  From~\eqref{J2}, since~$\int_\mathbb{S}v_k^2\d v=\frac1d$, one obtains~$\mathbb{J}[v_k]=\frac1de_k$, and we get~\ref{Uijvk}. Therefore $\mathbb{J}[\op{U}_{i,j}f]=(e_j \cdot \mathbb{J}[f]) e_i - (e_i \cdot \mathbb{J}[f]) e_j$, so we get
\[\op{U}_{i, j}^2f=-d\,(v_i(e_i \cdot \mathbb{J}[f])+v_j(e_j \cdot \mathbb{J}[f])),\]
and summing over~$i<j$ we obtain
\[\op{U}^2f=-(d-1)d\,v \cdot \mathbb{J}[f].\]

In fact, by \Cref{lemmeA2}--\ref{comAijvk}, we also have $\op{A}_{i,j}\op{U}_{i,j}f=-d (v_ie_i \cdot \mathbb{J}[f]+v_je_j\cdot \mathbb{J}[f])$, so by summing over~$i<j$ we also get~$\op{A}\op{U}=\op{U}^2$, and by antiselfadjointness this proves the first part of~\ref{U2}. From there we obtain~$\|\op{U}f\|_2^2=\langle\op{A}f,\op{U}f\rangle\leqslant\|\op{U}f\|_2\|\op{A}f\|_2$, and, thanks to~\eqref{J2},~$d(d-1)|\mathbb{J}[f]|^2=\langle-f,\op{U}^2f\rangle=\|\op{U}f\|_2^2$, which proves the last part of~\ref{U2}.

Similarly, still using \Cref{lemmeA2}--\ref{comAijvk}, we obtain that~$\op{A}_{i,j}\op{U}^2=- (d-1) \op{U}_{i,j}$. Then, since $\op{A}^2$ is the Laplace--Beltrami operator, from~\eqref{gradvdotJ} we get~$\op{A}^2v_{k}=-(d-1)v_k$, and therefore~$\op{A}^2\op U_{i,j}=-(d-1)\op{U}_{i,j}$. Finally, a direct computation of~$\op{U}_{i,j}\op{U}^2$ and~$\op{U}^2\op{U}_{i,j}$ using~\ref{Uijvk} and~\ref{U2}, together with the antiselfadjoint character of~$\op{U}$ and~$\op{A}$, gives~\ref{AU2}.
\end{proof}

\section{The kinetic Brownian motion: hypoellipticity and hypocoercivity.}\label{section-kinetic-BM-Hypo-Hypo}

As a starter to study \eqref{eq-VFP}, we leave aside the nonlinear alignment term for the moment. We are therefore interested in the Kolmogorov equation on the sphere, given by
\begin{equation*}
  \partial_tf+v\cdot\op{\nabla}_xf=\op{\Delta}_vf,%\label{eq-toy-model}
\end{equation*}
resulting of being the law of kinetic Brownian motion in velocity, which corresponds to the following stochastic process:
\begin{equation*}
  \begin{cases} 
    \d X= V \d t, \\
    \d V = P_{V^\perp}\circ\d B_{t},
  \end{cases}
\end{equation*}
studied by Baudoin and Tardif \cite{baudoin2018hypocoercive} (in a more general framework of Riemannian manifold in space as well). Using estimates based on~$\Gamma$-generalized calculus, they obtain a quantitative estimate of hypoelliptic (non-time-optimal) regularity. The goal of this section is to show that we can in fact remain in the classical case of Sobolev-type quadratic functionals, without deploying the machinery of the~$\Gamma$-calculus, and that it may provides precise and constructive decay estimates, both in short and long time.

As before, we note~$\op T=v \cdot \nabla_x$ and the equation is thus written
\begin{equation}
  \partial_tf+\op T f=\op A^2f.\label{eq-FPtoy}
\end{equation}

\subsection{The (quadratic) dissipation associated to the kinetic Brownian motion.}

For a linear (scalar) operator~$\mathsf X$ and a real ~$\mathcal{C}^\infty$ function $f$ with compact support~in~$\mathbb{R}^d\times\S$, we define the quadratic quantity
\begin{equation*}
  Q_{\op X}(f)=\int_{\mathbb{R}^d\times\mathbb{S}}f \, \op Xf \, \dd x \dd v,
\end{equation*}
which we will simply denote~$Q_{\op X}$ if the context is clear (and which therefore depends only on the self-adjoint part of~$\op X$). If~$f$ is a solution~to~\eqref{eq-FPtoy}, we have, after a direct computation, 
\begin{equation}\label{ddtQX}
  \frac{\d}{\d t}Q_{\op X}(f)=Q_{\Phi(\op{X})}(f),
\end{equation}
where the operator $\Phi(\op X)$ (which is self-adjoint when~$X$ is self-adjoint) goes as follows:
\begin{equation}
  \Phi(\op X)=\op A^2\op X+\op X\op A^2+[\op T,\op X]=2\op A\op X\op A+[\op A,[\op A,\op X]]+[\op T,\op X],\label{def-Phi}
\end{equation}
where in the last expression we recall that~$\op A\op X\op A$ stands for~$\sum_{i<j}\op A_{i,j}\op X\op A_{i,j}$. 

We now introduce~$\op{S}=[\op A,\op T]$. Thanks to \Cref{lemmeA2}--\ref{comAijvk}, we obtain $\op{S}_{i,j}=v_i\partial_{x_j}-v_j\partial_{x_i}$, which we can write~$\op S=v\wedge\nabla_x=v\wedge P_{v^\perp}\nabla_x$.
By direct calculation, we obtain:
\begin{equation}\label{eq-decomp-Deltax}
  \op{S}^2+\op T^2=\op{\Delta}_x,
\end{equation}
which is the main interest of our framework. 
The operators~$\op S$ and~$\op T$ commute, but thanks again to \Cref{lemmeA2}--\ref{comAijvk}, we have~$\op {[\op{A}_{i,j},S_{i,j}]}=-v_i \partial_{x_i}-v_j\partial_{x_j}$ and summing over~$i<j$ we find that~$[\op A,\op S]=-(d-1)\op T$.

To improve readability, we first calculate $\Phi(\op X)$ for all the operators that will be of interest.
\begin{proposition}\label{prop-computations-Phi} We have
  \begin{align}
    &\Phi(\op{Id})=2 \op A^2,\label{PhiId}\\
   &\Phi(- \op A^2)=-2 \op A^4 + \op{(AS+SA)},\label{PhiA2}\\
    &\Phi(-(\op{AS+SA}))=2\op S^2-2(\op{A}^2\op{SA}+\op{AS}\op{A}^2)+(d-1)(\op{AS+SA}),\label{PhiASpSA}\\
   & \Phi(-\op S^2)=-2\op{AS}^2\op{A}-2(d-1)\op{T}^2+2\,\op{S}^2.\label{PhiS2}\\
   & \Phi(-\op T^2)=-2\op{AT}^2\op{A}+2(d-1)\op{T}^2-2\,\op{S}^2.\label{PhiT2}
  \end{align}
\end{proposition}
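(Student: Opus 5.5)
The plan is a direct computation from the second expression in \eqref{def-Phi},
\[\Phi(\op X)=2\op{AXA}+[\op A,[\op A,\op X]]+[\op T,\op X],\]
using only three inputs:
\[[\op A,\op T]=\op S,\qquad [\op A,\op S]=-(d-1)\op T,\qquad [\op S,\op T]=0,\]
together with \Cref{lemmeA2}--\ref{comAijAkl}, which says that $\op A^2$ commutes with each $\op A_{i,j}$. Some care is needed with contracted indices: $\op S$ and $\op A$ are $\bigwedge^2$-valued, so $[\op A,\op S]$ denotes the contraction $\sum_{i<j}[\op A_{i,j},\op S_{i,j}]$. In contrast, $[\op A,\op S^2]$ with a free index $(i,j)$ means $[\op A_{i,j},\op S^2]$, and this has to be computed componentwise.

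\textbf{The scalar-type cases.} For $\Phi(\op{Id})$ only $2\op A\op A=2\op A^2$ survives. For $\op X=-\op A^2$, commutativity gives $2\op{AXA}=-2\op A^4$ and kills the double commutator. The transport term is
\[-[\op T,\op A^2]=\op A[\op A,\op T]+[\op A,\op T]\op A=\op{AS}+\op{SA},\]
which is \eqref{PhiA2}.

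\textbf{The first-order operator $\op X=-(\op{AS+SA})$.}
\begin{itemize}
\item The transport term is $-[\op T,\op{AS}+\op{SA}]=\op{S}^2+\op S^2=2\op S^2$, using $[\op T,\op A_{i,j}]=-\op S_{i,j}$ and $[\op T,\op S]=0$.
\item For $2\op{AXA}+[\op A,[\op A,\op X]]$, the cleaner route is the first form $\op A^2\op X+\op X\op A^2$. This gives $-\op A^2\op{AS}-\op A^2\op{SA}-\op{AS}\op A^2-\op{SA}\op A^2$.
\item Since $\op A^2$ commutes with $\op A$, this rewrites as $-2\op A^2\op{SA}-2\op{AS}\op A^2$ plus correction terms. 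These are $\op A^2\op{SA}-\op A\op A^2\op S$ and $\op{AS}\op A^2-\op{S}\op A^2\op A$, i.e.\ expressions such as $\op A[\op A^2,\op S]$ and $[\op S,\op A^2]\op A$ up to reordering.
\item One needs $[\op A^2,\op S_{k,\ell}]$ componentwise. I would compute $[\op A_{i,j},\op S_{k,\ell}]$ from \Cref{lemmeA2}--\ref{comAijvk}, which gives $\delta_{jk}\op S_{i,\ell}$-type terms. Summing, I expect $[\op A^2,\op S]=-(d-1)\op S+2(\text{terms in }\op A\op T)$ or similar.
\item The corrections should then collapse to $(d-1)(\op{AS+SA})$, the extra $\op{AT}$ pieces cancelling against each other.
\end{itemize}

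\textbf{The quadratic operators $\op X=-\op S^2$ and $-\op T^2$.} Here $2\op{AXA}$ directly gives $-2\op{AS}^2\op A$ and $-2\op{AT}^2\op A$. The transport commutators vanish, because $\op T$ commutes with $\op S^2$ and with $\op T^2$. For the double commutator, $[\op A_{i,j},\op T^2]=-(\op S_{i,j}\op T+\op T\op S_{i,j})=-2\op S_{i,j}\op T$. Applying $\op A_{i,j}$ again and summing gives
\[[\op A,[\op A,-\op T^2]]=2\sum\bigl([\op A_{i,j},\op S_{i,j}]\op T+\op S_{i,j}[\op A_{i,j},\op T]\bigr)=-2(d-1)\op T^2\cdot(-1)-2\op S^2,\]
with signs to be checked, giving \eqref{PhiT2}. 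Finally, \eqref{PhiS2} follows without new computation: by \eqref{eq-decomp-Deltax}, $\op S^2=\op\Delta_x-\op T^2$, and $\op\Delta_x$ commutes with $\op A$. Hence $[\op A,[\op A,-\op S^2]]=-[\op A,[\op A,-\op T^2]]$, which flips the sign of the last two terms.

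\textbf{Main obstacle.} The only delicate step is \eqref{PhiASpSA}. The free-index commutator $[\op A_{k,\ell},\op S_{i,j}]$ for $(k,\ell)\neq(i,j)$ produces off-diagonal $\op S_{i,\ell}$ terms. Their summation must be organised to recover exactly $(d-1)(\op{AS+SA})$, and this index bookkeeping, modelled on the proof of \Cref{lemmeA2}--\ref{comAijAkl}, is where I expect errors to arise.
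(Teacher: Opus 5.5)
Your treatment of \eqref{PhiId}, \eqref{PhiA2}, \eqref{PhiT2} and \eqref{PhiS2} is the paper's. In particular, deducing \eqref{PhiS2} from \eqref{PhiT2} through $\op S^2=\op{\Delta}_x-\op T^2$ and $[\op A,\op{\Delta}_x]=0$ is exactly what the paper does. However, the one non-trivial identity, \eqref{PhiASpSA}, is left unproved. You correctly reduce it to showing that the correction terms
\[
\op C=\big(\op A^2\op{SA}-\op A^2\op{AS}\big)+\big(\op{AS}\op A^2-\op{SA}\op A^2\big)
\]
equal $(d-1)(\op{AS}+\op{SA})$. You then only announce a componentwise computation of $[\op A^2,\op S_{k,\ell}]$, guess its result (``or similar''), and say a cancellation ``should'' occur. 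That step is the entire content of \eqref{PhiASpSA}, so the argument has a hole there. The detour is also harder than you suggest. From \Cref{lemmeA2} one gets $[\op A^2,\op S_{k,\ell}]=-(d-1)\op S_{k,\ell}+2\big(e_k\cdot\nabla_v\,\partial_{x_\ell}-e_\ell\cdot\nabla_v\,\partial_{x_k}\big)$. Its second part carries free spatial indices rather than $\op T$, so the bookkeeping would be genuinely messy.

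The missing observation is that no free-index commutator is needed. Every $\op A_{i,j}$ commutes with $\op A^2$, so you may pull $\op A^2$ to the left in the first bracket and to the right in the second:
\[
\op C=\op A^2\sum_{i<j}[\op S_{i,j},\op A_{i,j}]+\sum_{i<j}[\op A_{i,j},\op S_{i,j}]\,\op A^2=(d-1)\big(\op A^2\op T-\op T\op A^2\big)=(d-1)(\op{AS}+\op{SA}).
\]
This uses only the contracted identity $[\op A,\op S]=-(d-1)\op T$ and $[\op A^2,\op T]=\op{AS}+\op{SA}$, which you already proved for \eqref{PhiA2}. It is exactly the paper's argument, which writes $\op{AS}+\op{SA}=2\op{SA}-(d-1)\op T$ inside $\op A^2(\cdot)$ and $\op{AS}+\op{SA}=2\op{AS}+(d-1)\op T$ inside $(\cdot)\op A^2$. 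Even along your own route, the Jacobi identity and $[\op A_{i,j},\op A^2]=0$ give $\op C=-\sum_{i<j}[\op A_{i,j},[\op A^2,\op S_{i,j}]]=-[\op A^2,[\op A,\op S]]$, which again bypasses $[\op A^2,\op S_{k,\ell}]$.

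A minor point in \eqref{PhiT2}: since $\op S_{i,j}=[\op A_{i,j},\op T]$, one has $[\op A_{i,j},\op T^2]=+2\op S_{i,j}\op T$. Hence $[\op A,[\op A,-\op T^2]]=-2\sum_{i<j}\big([\op A_{i,j},\op S_{i,j}]\op T+\op S_{i,j}^2\big)=2(d-1)\op T^2-2\op S^2$. Your intermediate lines carry inconsistent signs, although the final value you state is correct.
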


\begin{proof}[\textbf{Proof of \Cref{prop-computations-Phi}.}] 
Since $[\op T,\op{Id}]=0$, we immediately get~\eqref{PhiId}. We combine $\op{[A,T]=S}$ (by definition) and $[\op T,\op A^2] = \op{A [T,A] + [T,A] A}$  to obtain~\eqref{PhiA2}. With the first expression of~$\Phi$ in~\eqref{def-Phi}, we use~$\op{[A,S]}=-(d-1)\op{T}$ and the fact that~$\op{S}$ and~$\op{T}$ commute to write
\begin{align*}
  \Phi(\op{AS+SA})&=\op A^2(2\op{SA}-(d-1)\op{T}) + (2\op{AS}+(d-1)\op{T})\op{A}^2+[\op T,\op{AS}]+[\op T, \op{SA}]\\
               & =2 (\op{A}^2\op{SA}+\op{ASA}^2)+(d-1)[\op{T},\op{A}^2]+[\op T,\op{A}]\op S+\op S[\op T, \op{A}]\\
  &=2 (\op{A}^2\op{SA}+\op{ASA}^2)-(d-1)(\op{AS+SA})-2\op{S}^2,
\end{align*}
which gives~\eqref{PhiASpSA}. Finally, to get \eqref{PhiS2}, we first proceed to prove~\eqref{PhiT2}. Indeed, since $\op{A}$ and $\op{\Delta}_x$ commute, we have $\Phi_{-\op{\Delta}_x}=-2\op{A\Delta}_x\op{A}$, which proves \eqref{PhiS2} from~\eqref{PhiT2} thanks to~\eqref{eq-decomp-Deltax}.

 We have, thanks to the fact that~$\op{S}$ and~$\op{T}$ commute:
\begin{equation}
    [\op A,\op T^2] = \op{T[A,T] + [A,T]T}=2\op{TS}.\label{comAT2}
\end{equation}
We then have
  \begin{equation}
    \op{[A,TS]=T[A,S]+[A,T]S}=-(d-1)\op{T}^2+\op S^2,\label{comATS}
  \end{equation}
and with the second expression of~$\Phi$ in~\eqref{def-Phi}, this gives~\eqref{PhiT2}.
\end{proof}

\subsection{Constructing suitable hypocoercivity functionals on the sphere.}

\begin{definition}\label{def-F-D}
  For~$\alpha,\beta,\gamma,\delta$ some positive parameters that we will be adapted later on, we define two energy functionals~$\mathcal{F}_0$,~$\mathcal{F}_1$ and two dissipation functionals~$\mathcal{D}_0$,~$\mathcal{D}_1$ as follows: 
 \begin{align*}
  \mathcal{F}_0& =Q_{\op{Id}},&\mathcal{F}_1& =\alpha Q_{-\op{A}^2}+\beta Q_{-\op{SA-AS}}+\gamma Q_{-\op{S}^2}+\delta Q_{-\op{T}^2},\\%\label{def-F0-F1}\\
  \mathcal{D}_0&=Q_{-\op{A}^2},&\mathcal{D}_1& =\alpha Q_{\op{A}^4}+\beta Q_{\op{A}^2\op{SA+ASA}^2}+\beta Q_{-\op{S}^2}+\gamma Q_{\op{AS}^2\op{A}}+\delta Q_{\op{AT}^2\op{A}}.%\label{def-D0-D1}
 \end{align*}

 Furthermore, we define the total energy and dissipation functionals as
 \begin{equation*}
   \mathcal{F}=\mathcal{F}_0+\mathcal{F}_1,\quad \mathcal{D}=\mathcal{D}_0+\mathcal{D}_1.
 \end{equation*}
\end{definition}

The main interest of the definition is the following energy~--~energy-dissipation identity, which proof is immediate from~\eqref{ddtQX}--\eqref{def-Phi} and \Cref{prop-computations-Phi}.

\begin{proposition}\label{prop-def-B}
 For any smooth and compactly supported solution~$f$ of equation~\eqref{eq-FPtoy} (which is the case if~$f^0$ is compactly supported), we have~$\forall t >0$,
  \begin{equation*}
    \frac{\d}{\d t}\mathcal{F}_0(f(t,\cdot))=-2\mathcal{D}_0(f(t,\cdot)), \qquad \frac{\d}{\d t}\mathcal{F}_1(f(t,\cdot))=-2\mathcal{D}_1(f(t,\cdot))+\mathcal{B}(f(t,\cdot)),    
  \end{equation*}
  where the functional~$\mathcal{B}$ (corresponding to “bad” terms without sign which will be controlled by the dissipative ones) is defined as follows:
  \begin{equation}
    \label{def-B}
   \mathcal{B}=-(\alpha+\beta(d-1))Q_{-\op{AS-SA}} + 2(\gamma-\delta)[(d-1)Q_{-\op{T}^2}-\,Q_{-\op{S}^2}].
 \end{equation}
\end{proposition}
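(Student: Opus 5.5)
The plan is to apply the evolution rule~\eqref{ddtQX} term by term, which turns each time derivative into $Q_{\Phi(\op X)}$, and then substitute the formulas of \Cref{prop-computations-Phi}. Since $f^0$ is smooth and compactly supported, finite speed of propagation and hypoelliptic regularity keep $f(t,\cdot)$ smooth and compactly supported in $x$. Hence all the quadratic quantities are well defined, and the integrations by parts behind~\eqref{ddtQX} are legitimate: $\op T$ is antisymmetric in $x$, and the $\op A_{i,j}$ are antiselfadjoint on $\S$ by \Cref{lemmeA2}--\ref{Antiself}.

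For $\mathcal{F}_0$, formula~\eqref{PhiId} gives $\frac{\d}{\d t}Q_{\op{Id}}=2Q_{\op A^2}=-2Q_{-\op A^2}=-2\mathcal{D}_0$. For $\mathcal{F}_1$, the map $\op X\mapsto\Phi(\op X)$ and the map $\op X\mapsto Q_{\op X}$ are both linear, so
\[
\frac{\d}{\d t}\mathcal{F}_1=\alpha Q_{\Phi(-\op A^2)}+\beta Q_{\Phi(-\op{AS}-\op{SA})}+\gamma Q_{\Phi(-\op S^2)}+\delta Q_{\Phi(-\op T^2)}.
\]
We insert~\eqref{PhiA2}--\eqref{PhiT2} and collect the terms.

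\begin{itemize}
\item The $\alpha$ term is $-2Q_{\op A^4}+Q_{\op{AS}+\op{SA}}$.
\item The $\beta$ term is $2Q_{\op S^2}-2Q_{\op A^2\op{SA}+\op{ASA}^2}+(d-1)Q_{\op{AS}+\op{SA}}$. Here $2Q_{\op S^2}=-2Q_{-\op S^2}$, which is exactly the $\beta Q_{-\op S^2}$ entry of $-2\mathcal{D}_1$.
\item The $\gamma$ term is $-2Q_{\op{AS}^2\op A}-2(d-1)Q_{\op T^2}+2Q_{\op S^2}$.
\item The $\delta$ term is $-2Q_{\op{AT}^2\op A}+2(d-1)Q_{\op T^2}-2Q_{\op S^2}$.
\end{itemize}

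The leading pieces reassemble into $-2\mathcal{D}_1$. What remains is
\[
(\alpha+\beta(d-1))Q_{\op{AS}+\op{SA}}+2(\gamma-\delta)\bigl[(d-1)Q_{-\op T^2}-Q_{-\op S^2}\bigr],
\]
and rewriting $Q_{\op{AS}+\op{SA}}=-Q_{-\op{AS}-\op{SA}}$ gives exactly $\mathcal{B}$ as in~\eqref{def-B}.

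The only delicate point is sign bookkeeping. In particular, I need to check that $-2(d-1)\gamma Q_{\op T^2}=2(d-1)\gamma Q_{-\op T^2}$ and $2\gamma Q_{\op S^2}=-2\gamma Q_{-\op S^2}$, and that the $\delta$ contributions enter with the opposite signs. There is no genuine obstacle beyond this verification and the justification of regularity.
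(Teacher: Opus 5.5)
Your proof is correct and follows the paper's argument exactly: the paper calls the identity immediate from \eqref{ddtQX}--\eqref{def-Phi} and \Cref{prop-computations-Phi}, and your term-by-term substitution is that computation. Your leftover terms regroup, signs included, into $-(\alpha+\beta(d-1))Q_{-\op{AS-SA}}+2(\gamma-\delta)[(d-1)Q_{-\op{T}^2}-Q_{-\op{S}^2}]$, which is $\mathcal{B}$.
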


\noindent An interested reader would point out that~$\mathcal{F}_0(f)=\|f\|_2^2$ and, 
\begin{equation*}
\mathcal{F}_1(f)=\alpha\|\nabla_vf\|_2^2+2\beta\langle\nabla_vf, \nabla_xf\rangle+\gamma\|P_{v^\perp}\nabla_xf\|^2_2+\delta\|v\cdot\nabla_xf\|_2^2,
\end{equation*}
and from there it seems clear that~$\mathcal{F}_0+\mathcal{F}_1$ is equivalent to the square of the~$H^1$ norm of~$f$, as long as~$\beta<\alpha\gamma$.
Similar terms appearing in the dissipation functionals can also be put under the form of standard Sobolev semi-norms, such as~$\mathcal{D}_0(f)=Q_{-\op{A}^2}=\|\nabla_vf\|^2_2=\|f\|_{\dot{H}^{0,1}}^2$, or~$Q_{\op{A}^4}=\|\op{\Delta}_vf\|^2_2=\|f\|_{\dot{H}^{0,2}}^2$ appearing in~$\mathcal{D}_1$, but the meaning of some other terms is somewhat more intricate. The next proposition quantifies more precisely these equivalences with Sobolev semi-norms, and provides the main estimates of this paragraph.

\begin{proposition}\label{prop-estimates-FDB}

  Assume $\beta^2<\alpha\gamma$. Then the functional~$\mathcal{F}_1$ is equivalent to the square of the~$\dot{H}^1$ semi-norm, and more precisely, for any function~$f\in C_c^\infty(\mathbb{R}^d\times\mathbb{S})$, we have
  \begin{align}\label{eq-F-Hs1-upper}
    \mathcal{F}_1(f)&\leqslant(1+\tfrac{\beta}{\sqrt{\alpha\gamma}})\big(\alpha\|\op{A}f\|_2^2+\gamma\|\op{S}f\|_2^2\big)+\delta\|\op{T}f\|_2^2,\\
    \label{eq-F-Hs1-lower}\mathcal{F}_1(f)&\geqslant(1-\tfrac{\beta}{\sqrt{\alpha\gamma}})\big(\alpha\|\op{A}f\|_2^2+\gamma\|\op{S}f\|_2^2\big)+\delta\|\op{T}f\|_2^2,
  \end{align}
  and we recall that $\|f\|^2_{\dot{H}^1}=\|f\|^2_{\dot{H}^{0,1}}+\|f\|^2_{\dot{H}^{1,0}}=\|\op{A}f\|_2^2+\|\op{S}f\|_2^2+\|\op{T}f\|_2^2$.
  
  Similarly~$\mathcal{D}_1$ is equivalent to the square of a Sobolev-type semi-norm given by~$\|\cdot\|^2_{\dot{H}^{0,2}}+\|\cdot\|^2_{\dot{H}^{1,0}}+ \|\cdot\|^2_{\dot{H}^{1,1}}$ through the following estimates:
  \begin{align}
    \mathcal{D}_1(f)&\leqslant(1+\tfrac{\beta}{\sqrt{\alpha\gamma}})(\alpha\|\op{A}^2f\|^2_2+\gamma\|\op{S}\otimes\op{A}f\|_2^2)+\beta\|\op{S}f\|^2_2+\delta \|\op{TA}f\|^2_2,\label{eq-D-Hs2-upper}\\
    \mathcal{D}_1(f)&\geqslant(1-\tfrac{\beta}{\sqrt{\alpha\gamma}})\big(\alpha\|\op{A}^2f\|_2^2+\gamma\|\op{S}\otimes\op{A}f\|_2^2\big)+\beta\|\op{S}f\|_2^2+\delta\|\op{TA}f\|^2_2\label{eq-D-Hs2-lower1}\\
                      &\geqslant(1-\tfrac{\beta}{\sqrt{\alpha\gamma}})\big(\alpha\|\op{A}^2f\|_2^2+\gamma\|\op{S}\otimes\op{A}f\|_2^2\big)+\tfrac{d-1}2\min(\beta,\sqrt{\beta\delta})\|\op{T}f\|^2_2\label{eq-D-Hs2-lower2}
  \end{align}
  where~$\|\op{S}\otimes\op{A}f\|_2^2=Q_{\op{AS}^2\op{A}}=\sum_{i<j}\sum_{k<\ell}\|\op{S}_{i,j}\op{A}_{k,\ell}f\|_2^2=\sum_{k<\ell}\|\op{S}\op{A}_{k,\ell}f\|_2^2=\sum_{i<j}\|\op{S}_{i,j}\op{A}f\|_2^2$, therefore giving
  \[\|f\|^2_{\dot{H}^{0,2}}+\|f\|^2_{\dot{H}^{1,0}}+ \|f\|^2_{\dot{H}^{1,1}}=\|\op{A}^2f\|_2^2+\|\op{S}f\|_2^2+\|\op{T}f\|_2^2+\|\op{S}\otimes\op{A}f\|_2^2+\|\op{TA}f\|^2_2.\]

Finally, the “bad” term~$\mathcal{B}(f)$ given by~\Cref{prop-def-B} is controlled by the dissipation: for any function~$f\in C_c^\infty(\mathbb{R}^d\times\mathbb{S})$,
\begin{equation}
\label{estimate-B-D0-D1} \mathcal{B}(f)\leqslant\frac{\alpha+(d-1)\beta}{\sqrt{\beta}}\big(\mathcal{D}_0(f)+\mathcal{D}_1(f)\big)+\frac{2|\gamma-\delta|}{\sqrt{\beta\delta}}\mathcal{D}_1(f).
\end{equation}

In particular, we obtain~$\frac{\d}{\d t}\mathcal{F}(t)\leqslant-(2-\varepsilon)\mathcal{D}(t)$ for $\varepsilon$ as small as we want, as soon as we have
    \begin{equation}\label{conditions-coeffs-notime}
      \alpha\ll\sqrt{\beta},\quad \beta\ll1,\quad |\gamma-\delta|\ll\sqrt{\beta\delta},
    \end{equation}
    where we use the symbol~$\ll$ to denote inequality up to a small positive constant~$C_\varepsilon$ that may depend only on~$\varepsilon$ and~$d$.
\end{proposition}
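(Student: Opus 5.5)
Every quadratic form $Q_{\op X}$ appearing in $\mathcal{F}_1$, $\mathcal{D}_1$ and $\mathcal{B}$ will be rewritten as a sum of squared $L^2$ norms plus cross terms. This uses the antiselfadjointness of $\op A$, $\op S$, $\op T$ on $L^2(\mathbb{R}^d\times\S)$; for $\op S_{i,j}=v_i\partial_{x_j}-v_j\partial_{x_i}$ it holds because the multiplication by $v$ commutes with $\partial_x$.

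\textbf{Energy equivalence.} We have $Q_{-\op A^2}=\|\op Af\|^2$, $Q_{-\op S^2}=\|\op Sf\|^2$ and $Q_{-\op T^2}=\|\op Tf\|^2$. The cross term is $Q_{-\op{SA}-\op{AS}}=2\langle\op Af,\op Sf\rangle$. Cauchy--Schwarz followed by Young in the form $2\beta ab\le \frac{\beta}{\sqrt{\alpha\gamma}}(\alpha a^2+\gamma b^2)$ gives \eqref{eq-F-Hs1-upper} and \eqref{eq-F-Hs1-lower}.

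\textbf{Dissipation upper bound and first lower bound.} The diagonal terms are identified as follows: $Q_{\op A^4}=\|\op A^2f\|^2$, $Q_{\op{AS}^2\op A}=\|\op S\otimes\op Af\|^2$ and $Q_{\op{AT}^2\op A}=\|\op{TA}f\|^2$. For the last two I use that $\op S_{i,j}$ and $\op T$ are antiselfadjoint and that $\op A_{k,\ell}$ is too; the various index orderings of $\|\op S\otimes\op A f\|$ coincide by expanding the sums. The cross term is $Q_{\op A^2\op{SA}+\op{ASA}^2}=2\langle\op A^2f,\op{SA}f\rangle$, where $\op{SA}f=\sum_{i<j}\op S_{i,j}\op A_{i,j}f$. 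By Cauchy--Schwarz on the $2$-vector contraction, $\|\op{SA}f\|\le\|\op S\otimes\op Af\|$, since $\op{SA}f$ is a contraction of the tensor $(\op S_{i,j}\op A_{k,\ell}f)$ with the identity. In fact I would check whether the paper intends $|\sum_{i<j}x_{ij}|$ to be bounded using only the diagonal part. If the constant then came out wrong, I would instead bound the cross term by $2\langle\op A(\op Af),\op S(\op Af)\rangle$ componentwise. Young's inequality then gives \eqref{eq-D-Hs2-upper} and \eqref{eq-D-Hs2-lower1}.

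\textbf{Second lower bound \eqref{eq-D-Hs2-lower2}.} This is the step I expect to be the main obstacle: extracting $\|\op Tf\|$ from $\beta\|\op Sf\|^2+\delta\|\op{TA}f\|^2$. I would use $[\op A,\op S]=-(d-1)\op T$ to write
\[(d-1)\|\op Tf\|^2=-\langle \op Tf,[\op A,\op S]f\rangle=\langle \op A\op Tf,\op Sf\rangle-\langle\op S\op Tf,\op Af\rangle .\]
Since $\op T$ and $\op S$ commute, antisymmetry turns the last term into $\langle\op{TA}f,\op Sf\rangle$ plus a commutator piece $\langle[\op A,\op T]f,\op Sf\rangle=\|\op Sf\|^2$. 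Writing $\op A\op T=\op T\op A+\op S$ in the first term gives
\[(d-1)\|\op Tf\|^2=2\langle\op{TA}f,\op Sf\rangle+c\|\op Sf\|^2\]
for some constant $c$. From this, $(d-1)\|\op Tf\|^2\le 2\|\op{TA}f\|\|\op Sf\|+c\|\op Sf\|^2$. I would bound this by $C\big(\sqrt{\beta\delta}^{-1}\cdot\text{(stuff)}\big)$, distinguishing the cases $\beta\le\delta$ and $\beta\ge\delta$. With the right weighting, $\beta\|\op S f\|^2+\delta\|\op{TA}f\|^2\ge\frac{d-1}{2}\min(\beta,\sqrt{\beta\delta})\|\op Tf\|^2$. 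I would keep half of $\beta\|\op Sf\|^2$ in reserve if needed.

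\textbf{Bad terms and conclusion.} $\mathcal B$ contains $2\langle\op Af,\op Sf\rangle$, which is bounded by $\frac{1}{\sqrt\beta}\|\op Af\|^2+\sqrt\beta\|\op Sf\|^2\le\frac1{\sqrt\beta}(\mathcal D_0+\mathcal D_1)$. It also contains $|(d-1)\|\op Tf\|^2-\|\op Sf\|^2|$, which \eqref{eq-D-Hs2-lower2} controls by $\frac{C}{\sqrt{\beta\delta}}\mathcal D_1$ (using $\beta\le\sqrt{\beta\delta}$ when $\beta\lesssim\delta$). This gives \eqref{estimate-B-D0-D1}. Under \eqref{conditions-coeffs-notime} both prefactors are $\le\varepsilon$, so Proposition~\ref{prop-def-B} yields $\frac{\d}{\d t}\mathcal F\le-(2-\varepsilon)\mathcal D$.
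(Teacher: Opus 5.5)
Your overall architecture is the paper's (Cauchy--Schwarz/Young on the cross terms, a commutator identity to recover $\op T$, then bounding $\mathcal B$). However, the step on which all the dissipation estimates rest is not justified. The inequality $\|\op{SA}f\|_2\leqslant\|\op S\otimes\op Af\|_2$ does not follow from Cauchy--Schwarz on the contraction. Indeed, $\op{SA}f$ is the trace of the $N\times N$ array $(\op S_{i,j}\op A_{k,\ell}f)$, with $N=\frac{d(d-1)}2$, and Cauchy--Schwarz only gives $\|\op{SA}f\|_2^2\leqslant N\sum_{i<j}\|\op S_{i,j}\op A_{i,j}f\|_2^2$. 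That would replace $1\pm\frac{\beta}{\sqrt{\alpha\gamma}}$ by $1\pm\sqrt N\frac{\beta}{\sqrt{\alpha\gamma}}$ in \eqref{eq-D-Hs2-upper}--\eqref{eq-D-Hs2-lower1}. The lower bound would then no longer hold under $\beta^2<\alpha\gamma$, and neither would $\beta\|\op Sf\|_2^2\leqslant\mathcal{D}_1$, which you use later for $\mathcal B$. Your fallback does not repair this: $\sum_{k<\ell}\langle\op A\op A_{k,\ell}f,\op S\op A_{k,\ell}f\rangle$ is not the cross term $\langle\op A^2f,\op{SA}f\rangle$, because moving $\op A_{k,\ell}$ past $\op{SA}$ produces commutators $[\op A_{k,\ell},\op S_{i,j}]\neq0$. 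The missing idea is that the $\op S_{i,j}$ are antiselfadjoint and commute with one another, so
\[\|\op{SA}f\|_2^2=\sum_{i<j}\sum_{k<\ell}\langle\op S_{i,j}\op A_{i,j}f,\op S_{k,\ell}\op A_{k,\ell}f\rangle=\sum_{i<j}\sum_{k<\ell}\langle\op S_{k,\ell}\op A_{i,j}f,\op S_{i,j}\op A_{k,\ell}f\rangle.\]
This pairs the array with its transpose, and Cauchy--Schwarz and Young then give $\|\op{SA}f\|_2^2\leqslant\|\op S\otimes\op Af\|_2^2$ with constant $1$.

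Two further corrections concern $\op T$. First, your constant is exactly $c=1$. We have $-\langle\op{ST}f,\op Af\rangle=\langle\op Sf,\op{TA}f\rangle$ with no commutator piece; only $\op{AT}=\op{TA}+\op S$ in the first term creates one. Hence $(d-1)\|\op Tf\|_2^2-\|\op Sf\|_2^2=2\langle\op{TA}f,\op Sf\rangle$, which is what produces $\frac{d-1}2$ in \eqref{eq-D-Hs2-lower2}; $c=2$ would give $\frac{d-1}3$. More importantly, for $\mathcal B$ you should use this identity two-sidedly, with no case distinction:
\[\big|(d-1)\|\op Tf\|_2^2-\|\op Sf\|_2^2\big|\leqslant2\|\op Sf\|_2\|\op{TA}f\|_2\leqslant\frac1{\sqrt{\beta\delta}}\big(\beta\|\op Sf\|_2^2+\delta\|\op{TA}f\|_2^2\big)\leqslant\frac1{\sqrt{\beta\delta}}\mathcal D_1.\]
Bounding $(d-1)\|\op Tf\|_2^2$ via \eqref{eq-D-Hs2-lower2} and $\|\op Sf\|_2^2$ via $\mathcal D_1/\beta$ separately loses a factor $2$ when $\delta\leqslant\beta$. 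When $\beta<\delta$ it only gives $\frac2\beta\mathcal D_1$, which is larger than $\frac{1}{\sqrt{\beta\delta}}\mathcal D_1$; your parenthetical inequality goes the wrong way. So neither the constant in \eqref{estimate-B-D0-D1} nor the conclusion under $|\gamma-\delta|\ll\sqrt{\beta\delta}$ would follow, since you would need $|\gamma-\delta|\ll\beta$.
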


\begin{proof}[\textbf{Proof of \Cref{prop-estimates-FDB}.}]
  We start with the estimation of~$\mathcal{F}_1$. Let us recall that we have~$\op A=v\wedge \nabla_v$ and~$\op S=v\wedge P_{v^\perp}\nabla_x$, hence we get:
  \[Q_{-\op{A}^2}=\|\op{A}f\|_2^2=\|\nabla_vf\|_2^2=\|f\|_{\dot{H}^{0,1}}^2,\quad Q_{-\op{S}^2}+Q_{-\op{T}^2}=\|\op{S}f\|_2^2+\|\op{T}f\|_2^2=Q_{-\op{\Delta}_x}=\|f\|_{\dot{H}^{1,0}}^2.\]

  We therefore only need to estimate $Q_{\op{SA+AS}}$. By Cauchy--Schwarz and (anti)selfadjointness of $\op{A}$ and~$\op{S}$, we have $|Q_{\op{SA+AS}}|\leqslant2\sqrt{Q_{-\op{A}^2}Q_{-\op{S}^2}}\leqslant\sqrt\frac{\alpha}{\gamma}Q_{-\op{A}^2}+\sqrt\frac{\gamma}{\alpha}Q_{-\op{S}^2}$ by Young’s inequality, and this provides the upper and lower bounds~\eqref{eq-F-Hs1-upper}--\eqref{eq-F-Hs1-lower}.

We now estimate $\mathcal{D}_1$. We only have to estimate $Q_{-\op{A}^2\op{SA-ASA}^2}$, since $Q_{\op{A}^4}$, $Q_{-\op{S}^2}$, $Q_{\op{AS}^2\op{A}}$ and $Q_{\op{A\Delta}_x\op{A}}$ are good terms. Using same type of arguments as above, we write, by Cauchy--Schwarz,
\[|Q_{\op{A}^2\op{SA}}|=|Q_{\op{ASA}^2}|\leqslant\sqrt{Q_{\op{A}^4}Q_{\op{(AS)(SA)}}}.\] 
To estimate $Q_{\op{(AS)(SA)}}$ one needs to pay attention to the subtleties of parentheses (and use the fact that~$\op{S}_{i,j}$ and~$\op{S}_{k,\ell}$ commute, and again Cauchy--Schwarz and Young). We write
\begin{align}
  Q_{\op{(AS)(SA)}}&=\sum_{i<j}\sum_{k<\ell}Q_{\op{A}_{i,j}\op{S}_{i,j}\op{S}_{k,\ell}\op{A}_{k,\ell}}=\sum_{i<j}\sum_{k<\ell}Q_{\op{A}_{i,j}\op{S}_{k,\ell}\op{S}_{i,j}\op{A}_{k,\ell}}\nonumber\\
&\leqslant\sum_{i<j}\sum_{k<\ell}\sqrt{Q_{\op{A}_{i,j}\op{S}_{k,\ell}^2\op{A}_{i,j}}}\sqrt{Q_{\op{A}_{k,\ell}\op{S}_{i,j}^2\op{A}_{k,\ell}}}\leqslant\sum_{i<j}\sum_{k<\ell}\frac12(Q_{\op{A}_{i,j}\op{S}_{k,\ell}^2\op{A}_{i,j}}+Q_{\op{A}_{k,\ell}\op{S}_{i,j}^2\op{A}_{k,\ell}})=Q_{\op{AS}^2\op{A}}. \label{ASSAvsAS2A} 
\end{align}
As a consequence, $|Q_{\op{A}^2\op{SA+ASA}^2}| \leqslant \sqrt\frac{\alpha}{\gamma} Q_{\op{A}^4} + \sqrt\frac{\gamma}{\alpha} Q_{\op{AS}^2\op{A}}$, and thus we obtain the upper bound and the first lower bound for~$\mathcal{D}_1$ given in~\eqref{eq-D-Hs2-upper}--\eqref{eq-D-Hs2-lower1}.

To prove the last lower bound for~$\mathcal{D}_1$ given in~\eqref{eq-D-Hs2-lower1}, let us show how this combination of~$\|\op{S}f\|_2^2=Q_{-\op{S}^2}$ and~$\|\op{T}f\|_{\dot{H}^{1,0}}^2=Q_{\op{A\Delta}_x\op{A}}$ allow to control~$Q_{-\op{T}^2}=\|\op{T}f\|_2^2$.
Recall that $(\op{STA})^\star = \op{A}^\star \op{T}^\star \op{S}^\star = - \op{ATS}$, and thus, by Cauchy--Schwarz~$|Q_{\op{ATS}}|=|Q_{\op{STA}}|\leqslant\sqrt{Q_{\op{AT}^2\op{A}}Q_{-\op{S}^2}}$. Using the commutator calculated previously in~\eqref{comATS}, whe have~$(d-1)\op{T}^2 - \op{S}^2=\op{STA-ATS}$, and using Young’s inequality we obtain
\begin{equation}
  |(d-1)Q_{-\op{T}^2}-Q_{-\op{S}^2}|\leqslant2\sqrt{Q_{\op{AT}^2\op{A}}Q_{-\op{S}^2}}\leqslant\sqrt{\frac{\beta}{\delta}} Q_{-\op{S}^2} + \sqrt{\frac{\delta}{\beta}} Q_{\op{AT}^2\op{A}}=\frac{1}{\sqrt{\beta\delta}}(\beta\|\op{S}f\|_2^2+\delta\|\op{TA}f\|_2^2),\label{Q-T2-S2-boundedbyD1}
\end{equation}
and therefore 
\begin{align}
  (d-1)Q_{-\op{T}^2}&\leqslant Q_{-\op{S}^2}+ \frac{1}{\sqrt{\beta\delta}}(\beta\|\op{S}f\|_2^2+\delta\|\op{TA}f\|^2_2) \nonumber\\
&\leqslant\left( \frac{1}{\beta}+ \frac{1}{\sqrt{\beta\delta}} \right)(\beta\|\op{S}f\|_2^2+\delta\|\op{TA}f\|_2^2)\leqslant\frac{2}{\min(\beta,\sqrt{\delta\beta})}(\beta\|\op{S}f\|_2^2+\delta\|\op{TA}f\|_2^2),\label{Q-T2-boundedbyD1} 
\end{align}
which completes the proof of the estimate~\eqref{eq-D-Hs2-lower2}.

To estimate~$\mathcal{B}$, we have
\begin{equation}
  \label{estimate-ASplusSA}
  |Q_{-\op{AS-SA}}|\leqslant2\sqrt{Q_{-\op{A}^2}Q_{-\op{S}^2}}\leqslant\frac{2\sqrt{ \mathcal{D}_0\mathcal{D}_1}}{\sqrt{\beta}}\leqslant\frac{1}{\sqrt{\beta}}(\mathcal{D}_0+\mathcal{D}_1),\qquad |(d-1)Q_{-\op{T}^2}-Q_{-\op{S}^2}|\leqslant\frac{1}{\sqrt{\beta\delta}}\mathcal{D}_1,
\end{equation}
thanks to~\eqref{Q-T2-S2-boundedbyD1} and~\eqref{eq-D-Hs2-lower1}. Therefore, with the definition~\eqref{def-B} of~$\mathcal{B}$, it gives~\eqref{estimate-B-D0-D1} and this ends the proof.
\end{proof}

These estimates ensure the decay of the solution to a constant isotropic state, when the initial condition belongs to~$H^1(\mathbb{R}^d\times\S)$, as stated in the next proposition.

\begin{proposition}\label{prop-decay-linear-H1} Given a $d$-dimensional flat torus, there exist positive constants $C$ and~$\lambda$ such that for any smooth solution~$f$ of~\eqref{eq-FPtoy} with zero average, we have for all positive time~$t$:
  \[\|f(t,\cdot)\|_{H^1}\leqslant C\|f(0,\cdot)\|_{H^1}e^{-\lambda t}.\]
  On~$\mathbb{R}^d$, there exists a positive constant $C$ such that for any smooth and compactly supported solution~$f$ of~\eqref{eq-FPtoy}, we have for all positive time~$t$:
  \[\|f(t,\cdot)\|_{H^1}\leqslant C\frac{\|f(0,\cdot)\|_{H^1}}{\big(1+\min(1,\frac{\|f(0,\cdot)\|_{H^1}}{\|f(0,\cdot)\|_{L^1}})^{\frac4d}\,t\big)^{\frac{d}4}}.\]

  The constants in this statement can be constructed explicitly and only depend on the dimension~$d$ and the shape of the domain.
\end{proposition}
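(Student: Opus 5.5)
We fix the coefficients $\alpha,\beta,\gamma,\delta$ once and for all (depending only on $d$) so that $\beta^2<\alpha\gamma$, say $\beta\leqslant\frac12\sqrt{\alpha\gamma}$, and so that the conditions~\eqref{conditions-coeffs-notime} hold with $\varepsilon=1$. A concrete choice is $\gamma=\delta=1$, $\beta$ small, and $\alpha$ with $\beta^2\ll\alpha\ll\sqrt\beta$ (for instance $\alpha=\beta^{3/4}$ for $\beta$ small). Then \Cref{prop-def-B} and \Cref{prop-estimates-FDB} give $\frac{\d}{\d t}\mathcal{F}\leqslant-\mathcal{D}$. By~\eqref{eq-F-Hs1-upper}--\eqref{eq-F-Hs1-lower}, $\mathcal{F}$ is equivalent to $\|f\|_{H^1}^2$ with constants depending only on $d$. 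By~\eqref{eq-D-Hs2-lower2},
\[
\mathcal{D}\geqslant c\big(\|\op{A}f\|_2^2+\|\op{A}^2f\|_2^2+\|\op{S}\otimes\op{A}f\|_2^2+\|\op{S}f\|_2^2+\|\op{T}f\|_2^2\big)\geqslant c\big(\|\op{A}f\|_2^2+\|\nabla_xf\|_2^2+\|\op{S}\otimes\op{A}f\|_2^2+\|\op{TA}f\|_2^2\big),
\]
using $\op{S}^2+\op{T}^2=\op{\Delta}_x$ and keeping the $\delta\|\op{TA}f\|_2^2$ term from~\eqref{eq-D-Hs2-lower1}. So $\mathcal{D}\geqslant c\,\|f\|_{\dot H^1}^2$ plus a control of the mixed terms. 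The only missing piece of $\|f\|_{H^1}^2$ is therefore $\|f\|_2^2$.

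\textbf{Torus.} Since $f$ has zero average in $(x,v)$, write $f=\langle f\rangle_v+(f-\langle f\rangle_v)$. The spherical Poincaré inequality gives $\|f-\langle f\rangle_v\|_2^2\leqslant\frac1{d-1}\|\op{A}f\|_2^2$. The average $\langle f\rangle_v$ has zero mean in $x$, so the Poincaré inequality on the torus gives $\|\langle f\rangle_v\|_2^2\leqslant C_P\|\nabla_x\langle f\rangle_v\|_2^2\leqslant C_P\|\nabla_xf\|_2^2$. Hence $\mathcal{D}\geqslant c'\|f\|_{H^1}^2\geqslant c''\mathcal{F}$. Grönwall then gives $\mathcal{F}(t)\leqslant e^{-c''t}\mathcal{F}(0)$, and the equivalence of $\mathcal{F}$ with $\|f\|_{H^1}^2$ yields the claim with $\lambda=c''/2$.

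\textbf{Whole space.} Here we use a Nash inequality. The $L^1$ norm is nonincreasing: the equation is linear, preserves positivity, and conserves mass, so $\|f(t)\|_{L^1}\leqslant\|f^0\|_{L^1}$ by splitting $f=f^+-f^-$. Applying Nash in $x$ for each $v$ and then Hölder in $v$ gives
\[
\|f\|_2^{2+\frac4d}\leqslant C_N\|f\|_{L^1}^{\frac4d}\|\nabla_xf\|_2^2.
\]
The delicate point is that the Nash inequality as written is not uniform in $v$ through the $L^1_x$ norm. We first apply Nash to $\int_\S|f|\,\d v$, whose $x$-gradient is bounded by $\int_\S|\nabla_xf|\,\d v$, and pair this with the bound $\|f-\langle f\rangle_v\|_2^2\lesssim\|\op{A}f\|_2^2$, writing $\|f\|_2^2\leqslant 2\|\langle f\rangle_v\|_2^2+2\|f-\langle f\rangle_v\|_2^2$. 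Nash applied to $\langle f\rangle_v$, with $\|\langle f\rangle_v\|_{L^1}\leqslant\|f\|_{L^1}$, gives $\|f\|_2^2\lesssim \|f^0\|_{L^1}^{\frac4{d+2}}\mathcal{D}^{\frac d{d+2}}+\mathcal{D}$.

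Combining with $\mathcal{D}\gtrsim\|f\|_{\dot H^1}^2$, we get $\mathcal{F}\lesssim\mathcal{D}+\|f^0\|_{L^1}^{\frac4{d+2}}\mathcal{D}^{\frac d{d+2}}$. Setting $y=\mathcal{F}$, this becomes
\[
y'\leqslant-c\min\big(y,\;y^{1+\frac2d}\|f^0\|_{L^1}^{-\frac4d}\big).
\]
Integrating this ODE (with $y$ nonincreasing, comparing separately the two regimes) gives
\[
y(t)\lesssim\frac{y(0)}{\big(1+\min(1,y(0)^{\frac12}/\|f^0\|_{L^1})^{\frac4d}t\big)^{\frac d2}}.
\]
Taking square roots and using $y(0)\simeq\|f^0\|_{H^1}^2$ gives the stated rate.

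The main obstacle is this last step: handling the two regimes of the ODE cleanly to produce the single $\min(1,\cdot)$ form, and checking the $L^1$ monotonicity for smooth compactly supported solutions, which follows from the maximum principle and mass conservation.
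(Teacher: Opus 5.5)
Your proof is correct and follows the paper's argument: fixed coefficients giving $\frac{\d}{\d t}\mathcal{F}\leqslant-\mathcal{D}$, spherical and toroidal Poincaré for the exponential decay, and on $\mathbb{R}^d$ the $L^1$ monotonicity plus Nash applied to $\op{\Pi}_0 f$, yielding $\mathcal{F}\lesssim\mathcal{D}+\|f^0\|_{L^1}^{\frac4{d+2}}\mathcal{D}^{\frac{d}{d+2}}$ and the same differential inequality. Your choice $\gamma=\delta=1$, $\alpha=\beta^{3/4}$ works as well as the paper's $(\beta,\gamma,\delta)=(\alpha^{\frac32},4\alpha^2,4\alpha^2)$, and your worry that Nash is ``not uniform in $v$'' is unfounded: your first route, Nash for each fixed $v$ followed by H\"older in $v$ with exponents $\frac{d+2}{4}$ and $\frac{d+2}{d}$, already gives $\|f\|_2^{2+\frac4d}\lesssim\|f\|_{L^1}^{\frac4d}\|\nabla_xf\|_2^2$ without the detour through $\langle f\rangle_v$.
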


\begin{proof}[\textbf{Proof of \Cref{prop-decay-linear-H1}.}] We fix coefficients~$\alpha$,~$\beta$,~$\gamma$ and~$\delta$ satisfying~\eqref{conditions-coeffs-notime}, ensuring for instance that~$\frac{\d}{\d t}\mathcal{F}\leqslant-\mathcal{D}$. This is doable by taking for instance $(\beta,\gamma,\delta)=(\alpha^{\frac32},4\alpha^2,4\alpha^2)$ with $\alpha$ sufficiently small (remember we must ensure $\beta^2<\alpha\gamma$).

    Using the Poincaré inequalities on the torus and on the sphere we obtain, for a smooth function~$f$ with zero average (denoting~$\op{\Pi}_0f$ its local average~$\int_{\S}f(\cdot,v)\d v$):
    \[\mathcal{F}_0=\|f\|_2^2=\|f-\op{\Pi}_0f\|_2^2+\|\op{\Pi}_0f\|_2^2\leqslant\frac1{d-1}\|\op{A}f\|_2^2+\frac1{\Lambda}\|\op{\Pi}_0f\|_{\dot{H}^1(\mathbb{R}^d)}^2\leqslant\frac1{d-1}\|\op{A}f\|_2^2+\frac1{\Lambda}\|f\|_{\dot{H}^{1,0}}^2.\]
    From there, we use~\Cref{prop-estimates-FDB} to get that~$\mathcal{D}\geqslant\widetilde{\Lambda}\mathcal{F}$ for some positive constant~$\widetilde{\Lambda}$ (only depending on the shape of the torus and the chosen coefficients~$\alpha$,~$\beta$,~$\gamma$,~$\delta$). By Gronwal’s inequality and equivalence of~$\mathcal{F}$ with the~$H^1$ squared norm, we obtain the first claim of the proposition.

    Let us now deal with the case of~$\mathbb{R}^d$. We use the fact that the $L^1$ norm of~$f$ is decreasing, as can be checked by multiplying the equation by the sign of~$f$ and integrating by parts. By Nash inequality on $\mathbb{R}^d$ we obtain
    \[\|\op{\Pi}_0f\|_2^{2+\frac4d}\leqslant C_{d}\|\op{\Pi}_0f\|_{\dot{H}^1(\mathbb{R}^d)}^2\|\op{\Pi}_0f\|_{L^1}^{\frac4d}\leqslant C_{d}\|f\|_{\dot{H}^1(\mathbb{R}^d)}^2\|f(0,\cdot)\|_{L^1}^{\frac4d}.\]
    Therefore we get, as before (using~$\lesssim$ to denote inequality up to an explicit constant only depending on~$d$), 
    \[\mathcal{F}_0\lesssim\|\op{A}f\|_2^2+\|f\|_{\dot{H}^{1,0}}^{\frac{2d}{d+2}}\|f(0,\cdot)\|_{L^1}^{\frac4{d+2}}.\]
    Thanks to~\Cref{prop-estimates-FDB}, we therefore get $\mathcal{F}\lesssim\mathcal{D}+\mathcal{D}^{\frac{d}{d+2}}\|f(0,\cdot)\|_{L^1}^{\frac4{d+2}}$. Using the fact that~$\mathcal{F}$ is nondecreasing, this leads (by separating the cases~$\mathcal{D}\leqslant\|f(0,\cdot)\|_{L^1}^2$ and~$\mathcal{D}\geqslant\|f(0,\cdot)\|_{L^1}^2$) to \[\mathcal{F}^{1+\frac2d}\lesssim\max(\|f(0,\cdot)\|_{L^1}^{\frac4d},\mathcal{F}(0)^{\frac2d})\mathcal{D}.\]
    Solving the corresponding differential inequality and using the equivalence of~$\mathcal{F}$ with the~$H^1$ squared norm ends the proof.
\end{proof}
  
From there, we now present some improvements regarding the short and long time behaviour, in view of the study of the nonlinear equation~\eqref{eq-VFP-condensed}. In short time, we want to obtain regularizing estimates (of hypoellipticity type à la Hérau~\cite{herau2007short}) allowing to have local well-posedness in a larger space. And in long time, we would like to obtain a constructive decay estimate in the whole space~$\mathbb{R}^d$ without relying on the~$L^1$ norm, which is unfortunately not controlled in the presence of the alignment term. Those improvements are easier to present in the simple case of the kinetic Brownian motion, which is done in the next two subsections.

\subsection{Short-time hypoelliptic estimates.}
\label{subsec-short-time}
The regularizing estimation à la Hérau~\cite{herau2007short} consists in letting the coefficients of the energy functional depend on time. By chosing appropriate powers of time, we obtain a decreasing quantity which, at time~$0$, is the $L^2$ norm. In order to keep light notations, we now suppose that the coefficients $\alpha$, $\beta$, $\gamma$, $\delta$ are functions of a parameter $\tau$, so we may write~$\mathcal{F}(\tau,f)=\mathcal{F}_0(f)+\mathcal{F}_1(\tau,f)$ to emphasize this dependence. We therefore have, after~\Cref{def-F-D},
\begin{equation*} \partial_\tau\mathcal{F}=\partial_\tau\mathcal{F}_1=\alpha'Q_{-\op{A}^2}+\beta'Q_{-\op{SA-AS}}+\gamma'Q_{-\op{S}^2}+\delta'Q_{-\op{T}^2}.%\label{def-dtauF}
\end{equation*}
 However, the functionals will be always evaluated for~$\tau=t$, so we will still denote~$\mathcal{F}(t)$ as a shortcut for $\mathcal{F}(t,f(t,\cdot))$. We proceed similarly for the other functionals that involve those coefficients, such as~$\mathcal{D}$ and~$\mathcal{B}$. Therefore we get from~\Cref{prop-def-B}, still denoting~$\mathcal{D}=\mathcal{D}_0+\mathcal{D}_1$
\begin{equation*}%\label{ddtFvariable}
  \frac{\d}{\d t}\mathcal{F}(t)=-2\mathcal{D}(t)+\mathcal{B}(t)+\partial_\tau\mathcal{F}_1(t).
\end{equation*}
We can then estimate~$\partial_\tau\mathcal{F}_1$ as was done for~$\mathcal{B}$ in~\eqref{estimate-B-D0-D1}, in order to obtain conditions ensuring decay of the total energy.
  \begin{proposition}\label{prop-conditions-decreaseF-time}
    We have
    \[\partial_\tau\mathcal{F}_1\leqslant|\alpha'|\mathcal{D}_0+\frac{|\beta'|}{\sqrt{\beta}}(\mathcal{D}_0+\mathcal{D}_1)+\Big(\frac{|\gamma'|}{\beta}+\frac{2|\delta'|}{(d-1)\min(\beta,\sqrt{\beta\delta})}\Big)\mathcal{D}_1.\]

    Therefore we obtain~$\frac{\d}{\d t}\mathcal{F}(t)\leqslant-(2-\varepsilon)\mathcal{D}(t)$ for $\varepsilon$ as small as we want, as soon as $\beta<\sqrt{\alpha\gamma}$ and
    \begin{equation}\label{conditions-coeffs-time}
      \alpha\ll\sqrt{\beta},\quad \alpha'\ll1,\quad \beta\ll1,\quad \beta'\ll\sqrt{\beta},\quad |\gamma-\delta|\ll\sqrt{\beta\delta},\quad |\gamma'|\ll\beta,\quad |\delta'|\ll\min(\beta,\sqrt{\beta\delta}).
    \end{equation}
    
    \end{proposition}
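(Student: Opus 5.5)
The plan is to bound each of the four terms of $\partial_\tau\mathcal{F}_1=\alpha'Q_{-\op{A}^2}+\beta'Q_{-\op{SA-AS}}+\gamma'Q_{-\op{S}^2}+\delta'Q_{-\op{T}^2}$ separately, using only the estimates already obtained in the proof of \Cref{prop-estimates-FDB}. The sign of the derivatives is irrelevant, so we bound each term by its absolute value.

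\emph{The $\alpha'$ term.} Since $Q_{-\op{A}^2}=\mathcal{D}_0$, this term is at most $|\alpha'|\mathcal{D}_0$.

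\emph{The $\beta'$ term.} We invoke \eqref{estimate-ASplusSA}, which gives $|Q_{-\op{AS-SA}}|\leqslant\frac{1}{\sqrt\beta}(\mathcal{D}_0+\mathcal{D}_1)$. This relies on $Q_{-\op{S}^2}\leqslant\mathcal{D}_1/\beta$, which holds because the lower bound \eqref{eq-D-Hs2-lower1} contains $\beta\|\op{S}f\|_2^2$ and its other terms are nonnegative when $\beta<\sqrt{\alpha\gamma}$.

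\emph{The $\gamma'$ term.} The same observation gives $|\gamma'|Q_{-\op{S}^2}\leqslant\frac{|\gamma'|}{\beta}\mathcal{D}_1$.

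\emph{The $\delta'$ term.} We use \eqref{Q-T2-boundedbyD1}, namely $(d-1)Q_{-\op{T}^2}\leqslant\frac{2}{\min(\beta,\sqrt{\beta\delta})}(\beta\|\op{S}f\|_2^2+\delta\|\op{TA}f\|_2^2)$. The right-hand bracket is bounded by $\mathcal{D}_1$, again by \eqref{eq-D-Hs2-lower1}. This yields the term $\frac{2|\delta'|}{(d-1)\min(\beta,\sqrt{\beta\delta})}\mathcal{D}_1$.

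Summing the four bounds gives the claimed inequality.

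For the consequence, we start from $\frac{\d}{\d t}\mathcal{F}=-2\mathcal{D}+\mathcal{B}+\partial_\tau\mathcal{F}_1$. We bound $\mathcal{B}$ by \eqref{estimate-B-D0-D1}, whose constants $\frac{\alpha+(d-1)\beta}{\sqrt\beta}$ and $\frac{2|\gamma-\delta|}{\sqrt{\beta\delta}}$ are each at most $\varepsilon/4$ under $\alpha\ll\sqrt\beta$, $\beta\ll1$ (so that $(d-1)\sqrt\beta$ is small) and $|\gamma-\delta|\ll\sqrt{\beta\delta}$. Likewise, the conditions on $\alpha',\beta',\gamma',\delta'$ in \eqref{conditions-coeffs-time} make every coefficient in the bound on $\partial_\tau\mathcal{F}_1$ as small as desired. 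Choosing each implicit constant $C_\varepsilon$ so that the total coefficient in front of $\mathcal{D}_0$ and of $\mathcal{D}_1$ is at most $\varepsilon$ gives $\frac{\d}{\d t}\mathcal{F}\leqslant-(2-\varepsilon)\mathcal{D}$.

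The argument is essentially bookkeeping. The one point needing care is that using \eqref{eq-D-Hs2-lower1} to dominate $\beta\|\op{S}f\|^2$ and $\delta\|\op{TA}f\|^2$ by $\mathcal{D}_1$ requires the condition $\beta<\sqrt{\alpha\gamma}$ stated in the proposition. This is where I would be careful, together with checking that the $\tau$-dependence of the coefficients does not affect the time-independent estimates, which are pointwise in $\tau$.
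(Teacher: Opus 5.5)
Your proposal is correct and matches the paper's proof: the paper also bounds the four terms of $\partial_\tau\mathcal{F}_1$ one by one, using $Q_{-\op{A}^2}\leqslant\mathcal{D}_0$, $\beta Q_{-\op{S}^2}\leqslant\mathcal{D}_1$ (from \eqref{eq-D-Hs2-lower1} with $\beta<\sqrt{\alpha\gamma}$) and the estimates \eqref{estimate-ASplusSA}--\eqref{Q-T2-boundedbyD1}, and then combines with \eqref{estimate-B-D0-D1} to obtain the conditions \eqref{conditions-coeffs-time}. You implicitly read the conditions on $\alpha',\beta',\gamma',\delta'$ as bounds on their absolute values, which is the right reading; it is genuinely needed for $\beta'$, since $Q_{-\op{SA-AS}}$ has no sign.
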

\begin{proof}[\textbf{Proof of \Cref{prop-conditions-decreaseF-time}.}]
The estimate is directly obtained from the proof of~\Cref{prop-estimates-FDB}, since we have~$Q_{-\op{A}^2}\leqslant\mathcal{D}_0$, $\beta Q_{-\op{S}^2}\leqslant\mathcal{D}_1$, and the estimates for~$Q_{-\op{AS-SA}}$ and~$Q_{-\op{T}^2}$ are given in~\eqref{estimate-ASplusSA}--\eqref{Q-T2-boundedbyD1}. The new conditions on the coefficients, compared to~\eqref{conditions-coeffs-notime}, therefore follow.
\end{proof}    

Let us now focus on the case of powers of time, therefore we write $\alpha=\alpha_0\tau^{p_\alpha}$, $\beta=\beta_0\tau^{p_\beta}$, $\gamma=\gamma_0\tau^{p_\gamma}$, and~$\delta=\delta_0\tau^{p_\delta}$. We want to satisfy the conditions~\eqref{conditions-coeffs-time} on~$\tau\in[0,1]$. First of all, for~$\tau=1$, we only need the following for the coefficients~$\alpha_0$,~$\beta_0$,~$\gamma_0$,~and~$\delta_0$ (remember that we also need~$\beta_0<\sqrt{\alpha_0\gamma_0}$):
    \begin{equation}\label{conditions-coeffs0}
      \alpha_0\ll\sqrt{\beta_0},\quad \beta_0\ll1,\quad \gamma_0\ll\sqrt{\beta_0\delta_0},\quad \delta_0\ll\beta_0.
    \end{equation}
    Indeed the conditions $\alpha_0\ll1$, $\beta_0\ll\sqrt{\beta_0}$, $\gamma_0\ll\beta_0$ are then implied. As before in the beginning of the proof of~\Cref{prop-decay-linear-H1}, taking $(\beta_0,\gamma_0,\delta_0)=(\alpha_0^{\frac32},4\alpha_0^2,4\alpha_0^2)$ with $\alpha_0\ll1$ allows to satisfy these conditions.

    And then the following conditions on the exponents~$p_\alpha$,~$p_\beta$,~$p_\gamma$ and~$p_\delta$ ensure that the conditions~\eqref{conditions-coeffs-time} (as long as~$\beta<\sqrt{\alpha\gamma}$) are satisfied for all~$\tau\in(0,1]$:
    \begin{equation*}%\label{conditions-exponents-short-time}
      2p_\beta\geqslant p_\alpha+p_\gamma,\quad 2p_\alpha\geqslant p_\beta,\quad p_\alpha\geqslant1, \quad p_\beta\geqslant2, \quad 2p_\gamma\geqslant p_\beta+p_\delta, \quad p_\gamma\geqslant p_\beta+1,\quad p_\delta\geqslant p_\beta+2. 
    \end{equation*}
    Indeed the remaining conditions $p_\beta\geqslant0$, $p_\delta\geqslant p_\beta$, $p_\delta\geqslant p_\beta+1$ are then implied. And from here, it is clear that the minimal exponents that we can take are~$(p_\alpha,p_\beta,p_\gamma,p_\delta)=(1,2,3,4)$. We therefore have~$\mathcal{F}$ nonincreasing on~$[0,1]$. In the following proposition, we summarize the consequences in terms of short time estimates for the components of the $H^1$ norm of~$f$, and we also present a small improvement in weight regarding for averaged quantities such as~$\mathbb{J}[f]$.
    \begin{proposition}\label{prop-short-time-kineticBM}
      There exists a constant~$C$ (only depending on the dimension~$d$) such that for any smooth and compactly supported solution~$f$ of~\eqref{eq-FPtoy}, we have for all~$t\in(0,1]$ :
      \[\|\nabla_vf\|_{L^2}\leqslant\frac{C}{\sqrt{t}}\|f^0\|_{L^2}, \quad \|P_{v^\perp}\nabla_xf\|_{L^2}\leqslant\frac{C}{t^{\frac32}}\|f^0\|_{L^2}, \quad \|v\cdot\nabla_xf\|_{L^2}\leqslant\frac{C}{t^{2}}\|f^0\|_{L^2}.\]
      Furthermore, if we fix~$\mu=1$ when~$d=2$,~$\mu\in(0,1]$ when~$d=3$ or~$\mu=0$ for~$d\geqslant4$, there exists a constant~$C$ (depending on~$\mu$ for~$d=3$) such that for all~$t\in(0,1]$ :
      \[\|\nabla_x\mathbb{J}[f]\|_{L^2(\mathbb{R}^d)}\leqslant\frac{C}{t^{\frac32+\frac{\mu}4}}\|f^0\|_{L^2}.\]
    \end{proposition}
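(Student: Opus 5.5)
The first three estimates come directly from the time-dependent functional constructed just before the statement. I take $\alpha=\alpha_0\tau$, $\beta=\beta_0\tau^2$, $\gamma=\gamma_0\tau^3$, $\delta=\delta_0\tau^4$, with $(\beta_0,\gamma_0,\delta_0)=(\alpha_0^{3/2},4\alpha_0^2,4\alpha_0^2)$ and $\alpha_0$ small depending only on $d$. Then $\beta^2=\alpha_0^3\tau^4<\sqrt{\ }$-condition $\beta<\sqrt{\alpha\gamma}=2\alpha_0^{3/2}\tau^2$ holds for all $\tau$, and \Cref{prop-conditions-decreaseF-time} gives $\frac{\d}{\d t}\mathcal{F}(t)\leqslant-(2-\varepsilon)\mathcal{D}(t)\leqslant 0$ on $(0,1]$.

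Since $\mathcal{F}(0)=\mathcal{F}_0(f^0)=\|f^0\|_2^2$, we get $\mathcal{F}(t)\leqslant\|f^0\|_2^2$ for $t\in(0,1]$. The lower bound \eqref{eq-F-Hs1-lower}, with $1-\beta/\sqrt{\alpha\gamma}=1-\tfrac12$, then yields
\[\tfrac12\alpha_0 t\|\op{A}f\|_2^2+\tfrac12\gamma_0t^3\|\op{S}f\|_2^2+\delta_0t^4\|\op{T}f\|_2^2\leqslant\|f^0\|_2^2 .\]
We conclude using $\|\op{A}f\|_2=\|\nabla_vf\|_2$ from \Cref{lemmeA2}, $\|\op{S}f\|_2=\|P_{v^\perp}\nabla_xf\|_2$ from \Cref{remark-dot-exterior}, and $\op{T}f=v\cdot\nabla_xf$. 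Continuity at $t=0$ is harmless by smoothness and compact support.

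For $\mathbb{J}[f]$, the naive bound $|\nabla_x\mathbb{J}[f]|^2\leqslant\frac1d\|\nabla_xf\|^2_{L^2(\S)}$ only gives $t^{-2}$, driven by the $\op T$ term, so the $\op{T}f$ contribution must be controlled differently. Write $\partial_{x_k}\mathbb{J}[f]=\int_\S v\,\partial_{x_k}f\,\d v$ and split $\nabla_xf=vv\cdot\nabla_xf+P_{v^\perp}\nabla_xf$. The $P_{v^\perp}$ part is bounded by $\|\op{S}f\|_2\lesssim t^{-3/2}\|f^0\|_2$.

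For the $v(v\cdot\nabla_x f)$ part, one only needs the projection of $\op Tf$ onto low spherical harmonics, namely degree $\leqslant 2$ components tested against $v\otimes v$. This is where I would invoke the appendix \Cref{lemma-Sg2}: the moment of $\op{T}f$ against a fixed polynomial should be controllable by interpolating between $\|\op{S}f\|_2$ (weight $t^{-3/2}$) and a quantity like $\|\op{T}f\|_2$ (weight $t^{-2}$), or $\|\op{S}\otimes\op{A}f\|$. The identity \eqref{eq-decomp-Deltax} $\op S^2+\op T^2=\op\Delta_x$ is the natural tool here. In Fourier variables $\xi$, one compares $\int (v\cdot\xi)^2|\hat f|^2$ with $\int|P_{v^\perp}\xi|^2|\hat f|^2$ on the low modes. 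The degree-1 and degree-2 harmonics concentrated near $v=\pm\xi/|\xi|$ give the dimension-dependent loss.

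The expected outcome is an interpolation of the form $\|\nabla_x\mathbb{J}[f]\|\lesssim\|\op Sf\|^{1-\mu/2}\,\|\op Tf\|^{\mu/2}$ (plus $\|\op Sf\|$). With weights $t^{-3/2}$ and $t^{-2}$ this gives $t^{-3/2-\mu/4}$. The value of $\mu$ reflects integrability of $(1-(v\cdot\hat\xi)^2)^{-\theta}$ on $\S$, which is the Lebesgue measure of a cap near the poles scaling like $r^{d-1}$: for $d=2$ we need $\mu=1$, for $d=3$ a log divergence forces $\mu>0$, and for $d\geqslant4$ we get $\mu=0$.

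The main obstacle is this last step: correctly stating and proving (or applying) the appendix lemma bounding degree-$\leqslant2$ moments of $\op Tf$ by $\op Sf$ with this dimension-dependent exponent. I would first try a Cauchy--Schwarz in $v$ with weight $|P_{v^\perp}\hat\xi|^{-2\theta}$ mode by mode in $\xi$.
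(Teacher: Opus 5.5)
Your argument for the first three bounds is the paper's own. It uses the time-weighted functional with exponents $(1,2,3,4)$, its monotonicity on $(0,1]$ via \Cref{prop-conditions-decreaseF-time}, $\mathcal{F}(0)=\|f^0\|_2^2$, and the lower bound \eqref{eq-F-Hs1-lower} with $\beta/\sqrt{\alpha\gamma}=\tfrac12$.

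For $\mathbb{J}[f]$ you arrive at the paper's inequality, but through an unnecessary detour and a misreading of the appendix. \Cref{lemma-Sg2} is not about moments of $\op{T}f$; it bounds $\|\nabla_x\op{\Pi}_\ell g\|_2$ directly by $\|\op{S}g\|_2$ and $\|\op{T}g\|_2$. Since $\op{\Pi}_1f=d\,v\cdot\mathbb{J}[f]$ commutes with $\nabla_x$, \eqref{J2fL2} gives $\|\nabla_x\mathbb{J}[f]\|_2^2=\frac1d\|\nabla_x\op{\Pi}_1f\|_2^2$. The lemma with $\ell=1$, applied to $f$ itself, then gives $\|\nabla_x\mathbb{J}[f]\|_2^2\lesssim\|\op{S}f\|_2^2+\|\op{S}f\|_2^{2-\mu}\|\op{T}f\|_2^{\mu}\lesssim\big(t^{-3}+t^{-\frac32(2-\mu)-2\mu}\big)\|f^0\|_2^2\lesssim t^{-3-\frac{\mu}2}\|f^0\|_2^2$ on $(0,1]$. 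That is exactly the paper's one-line proof. Splitting $\nabla_xf=v\,\op{T}f+P_{v^\perp}\nabla_xf$ only forces you to control $\op{\Pi}_\ell$ up to $\ell=3$.

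If instead you mean to prove the interpolation yourself by the weighted Cauchy--Schwarz you sketch, it misses the stated exponent in $d=2$. Write $|\int_\S hY\,\d v|^2\leqslant\int_\S h^2w^{1-\frac{\mu}2}\,\d v\int_\S Y^2w^{\frac{\mu}2-1}\,\d v$ with $w=1-(v\cdot e)^2$, and interpolate the first factor between $\mathcal{I}_e(h)$ and $\|h\|_2^2$. This requires $\int_0^\pi\sin^{d-4+\mu}\theta\,\d\theta<\infty$, i.e.\ $\mu>3-d$. That gives $\mu=0$ for $d\geqslant4$ and any $\mu\in(0,1]$ for $d=3$. For $d=2$ it gives only $\mu>1$, because the zonal harmonic $v\cdot e$ does not vanish at the poles, so you would get $t^{-\frac74-\epsilon}$ rather than $t^{-\frac74}$. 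Your remark that the integrability count yields exactly $\mu=1$ in dimension $2$ is therefore off at the endpoint. The paper reaches $\mu=1$ only through an explicit Fourier (Chebyshev) computation of the zonal component: it writes $\int_0^\pi h^2\sin^2\theta\,\d\theta=\frac{\pi}8\sum_\ell(a_\ell-a_{\ell+2})^2$ and then telescopes.
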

    \begin{proof}[\textbf{Proof of~\Cref{prop-short-time-kineticBM}.}]
      First of all, once we have chosen~$(p_\alpha,p_\beta,p_\gamma,p_\delta)=(1,2,3,4)$, and the coefficients~$\alpha_0$,~$\beta_0$,~$\gamma_0$ and~$\delta_0$ as in~\eqref{conditions-coeffs-time} (with~$\beta_0<\sqrt{\alpha_0\gamma_0}$), the nonincreasing quantity~$\mathcal{F}$ is equivalent to~$\|f\|_{L^2}^2+t\|\nabla_vf\|_{L^2}^2+t^3\|\op{S}f\|_{L^2}^2+t^4\|\op{T}f\|_{L^2}^2$.
      Since~$\|\op{S}f\|_{L^2}^2=\|P_{v^\perp}\nabla_xf\|_{L^2}^2$ and~$\|\op{T}f\|_{L^2}^2=\|v\cdot\nabla_xf\|_{L^2}^2$, this gives the first part of the proposition.
      The second part comes from~\Cref{lemma-Sg2} given in appendix, which implies that~$\|\nabla_x\mathbb{J}[f]\|_{L^2(\mathbb{R}^d)}^2\leqslant C(\|\op{S}f\|_{L^2}^2+\|\op{S}f\|_{L^2}^{2-\mu}\|\op{T}f\|_{L^2}^\mu$). Notice that, still using~\Cref{lemma-Sg2}, we would obtain the same estimates for the local density, or any local quantity only involving a finite number of modes in~$v$.
    \end{proof}

    \begin{remark} \label{remark-baudoin-tardif} To obtain the regularity estimates in~\cite{baudoin2018hypocoercive}, if we were to re-translate their~$\Gamma$-generalized-calculus approach into our framework, it would amount to using the following functional:
  \begin{equation*}
    Q_{\op{Id}}+at^2Q_{-\op{A}^2}+bt^4Q_{\op{-SA-AS}}+ct^6Q_{-\op{S}^2}+\hat{a}t^4Q_{\op{A}^4}+\hat{b}t^6Q_{[\op{T},\op{A}^2]}+\hat{c}t^8Q_{-\op{T}^2}.
  \end{equation*}
  In our case, we have used exponents two times smaller, and our functional is written with fewer terms:
  \[\mathcal{F}=Q_{\op{Id}}+\alpha_0tQ_{-\op{A}^2}+\beta_0t^2Q_{\op{-SA-AS}}+\gamma_0t^3Q_{-\op{S}^2}+\delta_0t^4Q_{-\op{T}^2}.\]
As we understand it, the presence of the term~$Q_{\op{A}^4}$ is there in~\cite{baudoin2018hypocoercive} simply to ensure that the functional is positive by taking as criterion~$\hat{b}^2<\hat{a}\hat{c}$. But we've actually seen in the beginning of the proof of~\Cref{prop-computations-Phi} that~$Q_{[\op{T},\op{A}^2]}=Q_{\op{-SA-AS}}$, so we don't need this term (which is the same as an already existing term, with a lower weight). We don't know if in the context of a Riemannian manifold also in space, which is the case in~\cite{baudoin2018hypocoercive}, our simple approach would work (there may be more terms with the curvature of the variety, in our case we used for example that~$\op{\Delta}_v$ and~$\op{\Delta}_x$ commute).
\end{remark}

\begin{remark} \label{remark-CZDGV} The enhanced dissipation estimates from~\cite{cotizelati2023orientation} come with hypocoercive estimates on the torus, derived through an energy, mode by mode in space. By reconstructing the total energy, we would recover an energy of the form to~$\|f\|_{L^2}^2+at\|\nabla_vf\|_{L^2}^2+bt^2\langle\nabla_xf,\nabla_vf\rangle+ct^3\|P_{v^\perp}\nabla_xf\|_{L^2}^2$, which corresponds to the first terms of our functional. Even if in~\cite{cotizelati2023orientation} this energy is not proven to be decreasing, its derivative is controlled mode by mode, and it allows the authors to obtain hypocoercive decay in $L^2(\S)$ for any mode. If this energy is bounded, then it would be actually possible (after some work using the fact that~$v\cdot\nabla_xf$ can be seen as a commutator) to recover the control of the form~$\|v\cdot\nabla_xf\|_{L^2}^2\leqslant\frac{C}{t^2}\|f^0\|_{L^2}$. However on the whole space, since the amplitude of the modes is not bounded below, we did not manage to control such an energy without adding the term~$\delta t^4\|v\cdot\nabla_xf\|_{L^2}^2$ directly into the functional, except in the case~$d\geqslant6$. In that case, the estimate~\eqref{Q-T2-S2-boundedbyD1} can be written differently, as
  \[(d-1)Q_{-\op{T}^2}-Q_{-\op{S}^2}\leqslant2\sqrt{Q_{-\op{T}^2}Q_{\op{AS}^2\op{A}}}\leqslant\sqrt{d-1}Q_{-\op{T}^2}+\frac{1}{\sqrt{d-1}}Q_{\op{AS}^2\op{A}},\]
  which gives, since~$Q_{-\op{S}^2}\geqslant0$ that~$(d-1)Q_{-\op{T}^2}-Q_{-\op{S}^2}\leqslant\frac{1}{\sqrt{d-1}-1}Q_{\op{AS}^2\op{A}}$. When~$d>5$ we have~$\sqrt{d-1}-1>1$ and therefore the last term in the expression~\eqref{def-B} of~$\mathcal{B}$ can now be controlled by~$c\mathcal{D}$ with~$c<2$ (compared to $\varepsilon\mathcal{D}$ for $\varepsilon$ as small as we want when we include the term~$\delta_0t^4Q_{-\op{T}^2}$), which is sufficient to get the decay.

  Let us also remark that, even if in our work we are not interested in the case of small diffusion~$\sigma$, our functional gives another related approach to obtain their enhanced dissipation result (which therefore is valid in any dimension~$d\geqslant2$). Indeed, by keeping~$\sigma$ in front of~$\op{A}^2$ in~\eqref{eq-FPtoy}, and rewriting our functional as \[Q_{\op{Id}}+\sigma\alpha_0tQ_{-\op{A}^2}+\sigma\beta_0t^2Q_{\op{-SA-AS}}+\sigma\gamma_0t^3Q_{-\op{S}^2}+\sigma^2\delta_0t^4Q_{-\op{T}^2},\] we still obtain that it is nonincreasing under the same conditions~\eqref{conditions-coeffs-time} on the coefficients~$\alpha_0$,~$\beta_0$,~$\gamma_0$,~$\delta_0$ (for~$\sigma t$ bounded). Together with the Poincaré inequality, this directly provides~$(1+C\sigma^2t^4)\|f\|_{L_2}^2\leqslant\|f^0\|_{L_2}^2$, thus giving exponential decay with rate controlled by~$\sqrt{\sigma}$ when~$\sigma$ is small. In our approach, what plays the role of their interpolation between~$\|P_{v^\perp}e f\|^2_{L^2_v}$ (for a unit vector~$e$) and~$\|\nabla_vf\|^2_{L^2_v}$ to control~$\|f\|_{L^2_v}$, is our inequality~\eqref{Q-T2-boundedbyD1} which allows to control~$\|v\cdot\nabla_xf\|^2$ by terms in the dissipation functional~$\mathcal{D}$. Let us mention that the interplay between index of hypoellipticity and time scale of enhanced dissipation has already been noticed in the literature, see for instance~\cite{albritton2022enhanced}.
\end{remark}

\subsection{Long-time estimates.}\label{subsec-long-time-KBM}

In this section, we are interested in deriving a long time estimate without using any information on the mass, that is, forgetting about the conservation of $\Vert f \Vert_{L^1(\R^d \times \S)}$. Indeed, this latter conservation will no longer be available for the nonlinear Vicsek equation (and neither for its linearised version) on the whole space. The next proposition is therefore quite similar to the second part of~\Cref{prop-decay-linear-H1}, but we control the $H^1$ norm thanks to $L^2$ moments in~$x$ instead of the~$L^1$ norm (using Heisenberg uncertainty principle instead of the Nash inequality), at the price of a worse exponent in time. 

\begin{proposition}\label{prop-decay-linear-H1-M} 
  There exist positive constants $C,\epsilon$ (which can be computed explicitly and only depend on the dimension~$d$) such that for any smooth and compactly supported solution~$f$ of~\eqref{eq-FPtoy} on~$\mathbb{R}^d$, denoting~$\mathcal{M}(t)=\int_{\R^d \times \S} \vert x \vert^2 \, f^2 \, \dd x\dd v$, we have for all positive time~$t$:
  \[\|f(t,\cdot)\|_{H^1}\leqslant C\frac{\|f(0,\cdot)\|_{H^1}}{\big(1+\frac{\|f(0,\cdot)\|^2_{H^1}}{\mathcal{M}(0)+\|f(0,\cdot)\|^2_{H^1}}\,t\big)^{\epsilon}}.\]
\end{proposition}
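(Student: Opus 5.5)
We keep the time-independent functional $\mathcal{F}=\mathcal{F}_0+\mathcal{F}_1$ of \Cref{def-F-D}, with coefficients fixed as in the proof of \Cref{prop-decay-linear-H1}. We then have $\frac{\d}{\d t}\mathcal{F}\leqslant-\mathcal{D}$ and $\mathcal{F}\simeq\|f\|_{H^1}^2$. The plan is to replace the Nash inequality by the Heisenberg uncertainty principle, using the moment $\mathcal{M}$. This is paid for by an a priori growth estimate on $\mathcal{M}(t)$.

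\textbf{Step 1: lower bound on the dissipation.} Heisenberg on $\mathbb{R}^d$, applied for each $v$ and integrated in $v$, gives
\[\|f\|_2^2\leqslant\tfrac{2}{d}\,\mathcal{M}^{1/2}\,\|\nabla_xf\|_2=\tfrac2d\,\mathcal{M}^{1/2}\|f\|_{\dot H^{1,0}}.\]
By \Cref{prop-estimates-FDB}, $\|\op{A}f\|_2^2+\|f\|_{\dot H^{1,0}}^2\lesssim\mathcal{D}$. Hence $\mathcal{F}_0\lesssim\mathcal{M}^{1/2}\mathcal{D}^{1/2}$. Moreover $\mathcal{F}_1\lesssim\|\op{A}f\|_2^2+\|f\|_{\dot H^{1,0}}^2$, and here I would need $\mathcal{F}_1\lesssim\mathcal{D}$. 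Unfortunately $\mathcal{D}_1$ only controls $\|\op A^2 f\|^2$, which dominates $\|\op{A}f\|^2$ by Poincaré on the sphere, but $\mathcal{D}_0=\|\op Af\|_2^2$ is anyway included, and $\|\op Sf\|^2,\|\op Tf\|^2$ are controlled by $\mathcal{D}_1$ through \eqref{eq-D-Hs2-lower2}. So indeed $\mathcal{F}_1\lesssim\mathcal{D}$. We conclude
\[\mathcal{F}\lesssim\mathcal{D}+\mathcal{M}^{1/2}\mathcal{D}^{1/2}.\]

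\textbf{Step 2: growth of the moment.} Multiply the equation by $|x|^2f$ and integrate. The Laplacian term gives $-2\int|x|^2|\nabla_vf|^2\leqslant0$, and the transport term gives $\int (v\cdot x)f^2$. Therefore
\[\mathcal{M}'\leqslant 2\mathcal{M}^{1/2}\|f\|_2\lesssim\mathcal{M}^{1/2}\mathcal{F}^{1/2}.\]
If we crudely bound $\mathcal{F}\leqslant\mathcal{F}(0)$, this gives $\mathcal{M}^{1/2}(t)\leqslant\mathcal{M}(0)^{1/2}+Ct\mathcal{F}(0)^{1/2}$, i.e. linear growth of $\mathcal{M}^{1/2}$. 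I expect this coupling to be the main obstacle. Linear growth inserted into Step 1 only yields something like $\mathcal{F}'\lesssim-\mathcal{F}^2/(\mathcal{M}_0+t^2\mathcal{F}_0)$, which is borderline and gives at best logarithmic decay. To get a genuine power $\epsilon$, we must use the decay of $\mathcal{F}$ inside the moment estimate, which produces a coupled second-order system.

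\textbf{Step 3: closing the coupled differential inequalities.} Set $Y=\mathcal{M}^{1/2}$. From Step 1, in the regime $\mathcal{D}\leqslant\mathcal{M}$ (the other regime gives exponential decay directly), we get $\mathcal{D}\gtrsim\mathcal{F}^2/Y^2$. Hence
\[\mathcal{F}'\leqslant-c\,\mathcal{F}^2/Y^2,\qquad Y'\leqslant C\mathcal{F}^{1/2}.\]
I would look for a supersolution of power type: $\mathcal{F}=\mathcal{F}(0)(1+\kappa t)^{-2\epsilon}$ and $Y\lesssim Y_0+\mathcal{F}(0)^{1/2}(1+\kappa t)^{1-\epsilon}/\kappa$, with $\kappa=\mathcal{F}(0)/(\mathcal{M}(0)+\mathcal{F}(0))$. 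Plugging in, we need $c\,\mathcal{F}(0)^2(1+\kappa t)^{-4\epsilon}/Y^2\geqslant 2\epsilon\kappa\,\mathcal{F}(0)(1+\kappa t)^{-2\epsilon-1}$. Since $Y^2\lesssim(\mathcal{M}_0+\mathcal{F}_0)(1+\kappa t)^{2-2\epsilon}/\kappa\cdot$const, this reduces to $c'(1+\kappa t)^{-2\epsilon-2+2\epsilon+1}\gtrsim\epsilon(1+\kappa t)^{-1}\cdot$const. The powers match, so the condition holds provided $\epsilon$ is small compared to the constants $c,C$. This explains why $\epsilon$ depends on the constants of the energy–dissipation inequality. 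I would make this rigorous with a comparison/continuity argument on $\Psi=\mathcal{F}(1+\kappa t)^{2\epsilon}$: at a first time where $\Psi$ exceeds $2\mathcal{F}(0)$, the bound on $Y$ (valid up to that time) forces $\Psi'<0$, a contradiction. Finally, $\|f\|_{H^1}^2\simeq\mathcal{F}$ converts this into the stated bound, after renaming $\epsilon$.
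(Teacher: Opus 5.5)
\textbf{There is a genuine gap at Step~2, and an arithmetic slip in Step~3 hides it.} Your Step~1 is fine; it is the paper's Heisenberg argument, though the two regimes should be merged into the single bound $\mathcal{D}\gtrsim\mathcal{F}^2/(\mathcal{F}+\mathcal{M})$ rather than treated separately.

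Dropping $-2\int|x|^2|\nabla_vf|^2$ and using $v\cdot x\leqslant|x|$ only gives ballistic growth, $(\mathcal{M}^{1/2})'\leqslant\|f\|_2$. Take your ansatz $\mathcal{F}=\mathcal{F}(0)(1+\kappa t)^{-2\epsilon}$ together with your bound $Y^2\lesssim(\mathcal{M}(0)+\mathcal{F}(0))(1+\kappa t)^{2-2\epsilon}/\kappa$. Then the dissipation side is of order $\kappa^2\mathcal{F}(0)(1+\kappa t)^{-2-2\epsilon}$. The required side is $2\epsilon\kappa\mathcal{F}(0)(1+\kappa t)^{-1-2\epsilon}$. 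So the condition is $c'\kappa\geqslant\epsilon(1+\kappa t)$, which fails for large $t$. The powers do not match: you divided only one side by $(1+\kappa t)^{-2\epsilon-1}$, so the factor $(1+\kappa t)^{-1}$ you kept on the right should have cancelled.

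No choice of $\epsilon$ repairs this, because the pair $\mathcal{F}'\leqslant-c\mathcal{F}^2/(\mathcal{F}+Y^2)$, $Y'\leqslant C\mathcal{F}^{1/2}$ implies no decay at all. Take the solution with equality in both inequalities. Since $\mathcal{F}$ is nonincreasing, $Y(t)\geqslant Ct\,\mathcal{F}(t)^{1/2}$. Hence $(\log\mathcal{F})'\geqslant-\frac{c}{C^2t^2}$, which is integrable at infinity, so $\mathcal{F}$ stays bounded away from zero.

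\textbf{The missing idea:} the transport contribution $\int(v\cdot x)f^2$ only sees the anisotropic part of $f$, so it can be absorbed by the velocity dissipation you discarded. By \eqref{IntGradv}, $\int_\S(v\cdot x)f^2\,\d v=\frac1{d-1}\int_\S x\cdot\nabla_v(f^2)\,\d v$. Using $|x\cdot\nabla_vf|\leqslant|x||\nabla_vf|$ and completing the square,
\[\tfrac12\mathcal{M}'=\tfrac{2}{d-1}\int f\,x\cdot\nabla_vf-\int|x|^2|\nabla_vf|^2\leqslant-\int\Big(x\cdot\nabla_vf-\tfrac{f}{d-1}\Big)^2+\tfrac{1}{(d-1)^2}\int f^2,\]
that is, $\mathcal{M}'\leqslant\frac{2}{(d-1)^2}\mathcal{F}_0$. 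This is diffusive rather than ballistic growth.

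With this bound your continuity argument on $\mathcal{F}(1+\kappa t)^{2\epsilon}$ does close. Indeed $\mathcal{F}+\mathcal{M}\lesssim(\mathcal{M}(0)+\mathcal{F}(0))(1+\kappa t)^{1-2\epsilon}$, the exponents genuinely match, and one only needs $\epsilon\lesssim c$.

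The paper closes the argument differently. It sets $\mathcal{U}(t)=\mathcal{F}(0)+\mathcal{M}(0)+\frac{2}{(d-1)^2}\int_0^t\mathcal{F}$, which satisfies $\mathcal{F}+\mathcal{M}\leqslant\mathcal{U}$ and $\mathcal{U}''\mathcal{U}\leqslant-p\,\mathcal{U}'^2$. Integrating twice gives $\mathcal{U}(t)\leqslant\mathcal{U}(0)\big(1+C\frac{\mathcal{F}(0)}{\mathcal{U}(0)}t\big)^{1/(p+1)}$, and then decay of $\mathcal{F}$ with exponent $\frac{p}{p+1}$.
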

\begin{proof}[\textbf{Proof of~\Cref{prop-decay-linear-H1-M}}]

  We proceed as in the proof of~\Cref{prop-decay-linear-H1}, by first fixing the coefficients in order to have~$\frac{\d}{\d t}\mathcal{F}\leqslant-\mathcal{D}$. Now we have
\begin{align}
  \frac12  \dt \mathcal{M}(t) & = \int_{\R^d \times \S} \vert x \vert^2 \, f \left( - v \cdot \nabla_x f + \Delta_v f\right)  \, \dd x\dd v \nonumber \\
& = \int_{\R^d \times \S}  \left(v \cdot x \right) f^2\, \dd x\dd v - \int_{\R^d \times \S} \vert x \vert^2 \,\left\vert \nabla_v f\right\vert^2   \, \dd x\dd v \nonumber \\ 
& = \frac{2}{d-1}\int_{\R^d \times \S}  (x \cdot \nabla_v f)\, f \,\dd x\dd v - \int_{\R^d \times \S} \vert x \vert^2 \,\left\vert \nabla_v f\right\vert^2   \, \dd x\dd v \nonumber \\ 
&\leqslant  \int_{\R^d \times \S} -\big(x\cdot\nabla_vf-\frac{f}{d-1}\big)^2+\frac{f^2}{(d-1)^2} \, \dd x\dd v \leq \frac{1}{(d-1)^2} \mathcal{F}_0(t).\label{estimate-ddtM-linear}
\end{align}
where we have used that $  \int_\mathbb{S} x \cdot \nabla_v (f^2)\,\d v =(d-1)\int_\mathbb{S} (v \cdot x) \, f^2 \,\d v$ after what has been stated in \eqref{IntGradv}.

By Heisenberg's uncertainty principle,  
\begin{equation*}
\mathcal{F}_0(t)=\int_{\R^d \times \S} f^2 \, \dd x\dd v \leq \frac{2}{d} \left( \int_{\R^d \times \S} \vert x \vert^2 f^2 \, \dd x\dd v \right)^{\frac{1}{2}}\left( \int_{\R^d \times \S} \vert \nabla_x f  \vert^2 \, \dd x\dd v \right)^{\frac{1}{2}}
\end{equation*}
Combining with the hypocoercive estimate of \Cref{prop-estimates-FDB}, we obtain
\[\mathcal{F}(t)=\mathcal{F}_0(t)+\mathcal{F}_1(t)\leqslant C(\sqrt{\mathcal{M}(t)}\sqrt{\mathcal{D}(t)}+\mathcal{D}(t)),\]
from which we get (by separating the cases~$\mathcal{D}\leqslant\mathcal{F}$ and~$\mathcal{D}\geqslant\mathcal{F}$):
\[\mathcal{F}(t)^2\leqslant\widetilde{C}(\mathcal{M}(t)+\mathcal{F}(t))\mathcal{D}(t).\]

We therefore have
\begin{align}
  \dt \mathcal{F}(t) &\leq - c \, \frac{\mathcal{F}(t)^2}{\mathcal{F}(t)+\mathcal{M}(t)}, \label{dtF-F2}\\
  \dt \mathcal{M}(t) &\leq \frac{2}{(d-1)^2} \mathcal{F}(t). \label{dtM-F}
\end{align}
The functional $\mathcal{U}(t) = \mathcal{F}(0) + \mathcal{M}(0) + \frac{2}{(d-1)^2}\int_0^t \mathcal{F}(s) \, \dd s$ satisfies, 
\begin{equation*}
    \mathcal{U}''(t) = \frac{2\mathcal{F}'(t) }{(d-1)^2}\leq - \frac{(d-1)^2c}{2} \, \frac{\mathcal{U}'(t)^2}{\mathcal{F}(t)+\mathcal{M}(t)},
  \end{equation*}
and by using~\eqref{dtM-F} and the fact that~$\mathcal{F}(t)$ is nondecreasing, we get
\begin{equation}
  \mathcal{F}(t)+\mathcal{M}(t) \leq \mathcal{F}(0)+\mathcal{M}(0) + \frac{2}{(d-1)^2}\int_0^t \mathcal{F}(s) \, \dd s = \mathcal{U}(t).\label{eq-FM-U}
\end{equation}

Thus we finally obtain~$\mathcal{U}''(t)\mathcal{U}(t) \leq - p\,\mathcal{U}'(t)^2$ for~$p=\frac{(d-1)^2c}{2}$, from which one deduces after integration that
\begin{equation*}
  \mathcal{U}'(t)\mathcal{U}(t)^{p}\leqslant\mathcal{U}'(0)\mathcal{U}(0)^p=\tfrac{2}{(d-1)^2}\mathcal{F}(0)\mathcal{U}(0)^p,
\end{equation*}
and therefore by a second integration
\[\mathcal{U}(t)\leqslant\Big(\mathcal{U}(0)^{p+1}+\tfrac{2(p+1)}{(d-1)^2}\mathcal{F}(0)\mathcal{U}(0)^pt\Big)^\frac1{p+1}=\mathcal{U}(0)\Big(1+\tfrac{2(p+1)\mathcal{F}(0)}{(d-1)^2\mathcal{U}(0)}t\Big)^{\frac1{p+1}}.\]
Using~\eqref{dtF-F2} and~\eqref{eq-FM-U}, we therefore get, after another integration
\[  \frac{1}{\mathcal{F}(t)}\geqslant\frac1{\mathcal{F}(0)}+\frac{c}{\mathcal{U}(0)}\int_0^t\Big(1+\tfrac{2(p+1)\mathcal{F}(0)}{(d-1)^2\mathcal{U}(0)}s\Big)^{\frac{-1}{p+1}}\d s\geqslant\frac{\widetilde{c}}{\mathcal{F}(0)}\Big(1+ \tfrac{\mathcal{F}(0)}{\mathcal{U}(0)}t\Big)^{\frac{p}{p+1}}.\]
Using~$\mathcal{U}(0)=\mathcal{F}(0)+\mathcal{M}(0)$ and the equivalence of~$\mathcal{F}$ with the~$H^1$ squared norm ends the proof.
\end{proof}

\section{Well-posedness and continuity with respect to initial condition in \texorpdfstring{$H^{s,0}$}{Hˢ⁰}.}
\label{section-well-posedness-Hs0}
\subsection{Higher order functionals and their derivatives in time}
We now study the nonlinear kinetic equation under its condensed form~\eqref{eq-VFP-condensed}. As in the previous section, when~$\op X$ is a selfadjoint linear operator, we study the same quadratic quantity~$Q_{\op X}(f)=\int_{\mathbb{R}^d\times\S}f\op Xf\dd x\dd v$. New terms appear in its derivative in time, corresponding to the nonlinear alignment term:
\begin{equation}\label{ddtQXnonlin}
  \frac{\d}{\d t}Q_{\op{X}}(f)=Q_{\Phi(\op{X})}(f)+\frac2d\int_{\mathbb{R}^d\times\mathbb{S}}(\op{A}\op{X} f)\,(\op{U} f)\,f\,\dd x\dd v.
\end{equation}
where~$\Phi(\op X)$ is defined in~\eqref{def-Phi}. Regarding the existence and uniqueness of solutions to~\eqref{eq-VFP-condensed}, we will also be interested in the difference between solutions to the equation, and in solutions to a similar equation when some of the terms have been frozen, as in~\eqref{eq-FP-Lin}, in order to interpret the solution as a fixed point. Therefore, the cubic term in the right-hand side may be taken with different functions instead of three times the function~$f$, which motivates the following definition.

\begin{definition}\label{def-RX}
   For~$f$,~$g$,~$h$ in~$C_c^\infty(\mathbb{R}^d\times\S)$, we define
\[  R_{\op{X}}(f,g,h)=\int_{\mathbb{R}^d\times\mathbb{S}}(\op{A}\op{X} f) \,g\,(\op{U} h)\,\dd x\dd v.
\]
\end{definition}

We start by describing the main components of our higher-order energy functionals. 
\begin{definition}\label{def-Fs-Ds-Bs-Rs}
  For $s\geqslant0$, and~$\alpha,\beta,\gamma,\delta$ some positive parameters (as in~\Cref{subsec-short-time}, they may later depend on a parameter~$\tau$) we define,~
  \begin{align*}
    \mathcal{F}_{0,s}&=\mathcal{F}_{0}((-\op{\Delta}_x)^{\frac{s}2}\cdot)=Q_{(-\op{\Delta}_x)^s}=\|\cdot\|^2_{\dot{H}^{s,0}},& \mathcal{F}_{1,s}&=\mathcal{F}_{1}((-\op{\Delta}_x)^{\frac{s}2}\cdot)=Q_{(-\op{\Delta}_x)^s\op{X}_1},\\%\label{def-F0-F1-s}\\
    \mathcal{D}_{0,s}&=\mathcal{D}_{0}((-\op{\Delta}_x)^{\frac{s}2}\cdot)=Q_{-(-\op{\Delta}_x)^s\op{A}^2},& \mathcal{D}_{1,s}&=\mathcal{D}_{1}((-\op{\Delta}_x)^{\frac{s}2}\cdot)=Q_{(-\op{\Delta}_x)^s\op{Y}_1},%\label{def-D0-D1-s}
  \end{align*}
  where the functionals $\mathcal{F}_0$,~$\mathcal{F}_1$,~$\mathcal{D}_0$,~$\mathcal{D}_1$ are defined in~\Cref{def-F-D}, giving
  \begin{align*}
    \op{X}_1&=-\alpha\op{A}^2-\beta(\op{SA+AS})-\gamma\op{S}^2-\delta\op{T}^2,\\%\label{def-X1}\\
    \op{Y}_1&=\alpha\op{A}^4+\beta(\op{A}^2\op{SA+ASA}^2)-\beta\op{S}^2+\gamma\op{AS}^2\op{A}+\delta\op{AT}^2\op{A}.%\label{def-Y1}
  \end{align*}

  We then define similarly the equivalent “bad” terms, as in~\eqref{def-B}, by
  \begin{equation*}
    \mathcal{B}_{s}=\mathcal{B}((-\op{\Delta}_x)^{\frac{s}2}\cdot)=Q_{(-\op{\Delta}_x)^s\op{Z}},%\label{def-B-s}
  \end{equation*}
  where
  \begin{equation*}
    \op{Z}=[(\alpha+\beta(d-1))(\op{AS+SA})+2(\gamma-\delta)(\op{S}^2-(d-1)\op{T}^2)].%\label{def-Z}.
  \end{equation*}
  Finally, for any smooth functions $f$,~$g$ and~$h$, we define the following remainder functionals:
  \begin{equation*}
    \mathcal{R}_{0,s}(f,g,h)=R_{(-\op{\Delta}_x)^s}(f,g,h),\qquad \mathcal{R}_{1,s}(f,g,h)=R_{(-\op{\Delta}_x)^s\op{X}_1}(f,g,h).%\label{def-R0-R1-s}
  \end{equation*}
\end{definition}

From these definitions, when~$f$ is a smooth solution of~\eqref{eq-VFP-condensed}, and when the parameter~$\tau$ corresponds to time~$t$, we immediately obtain the derivatives of the functionals thanks to~\eqref{ddtQXnonlin}. For all $s\geqslant0$, we have the following identities:
  \begin{align}
    \label{ddtFs0f}
    \frac{\d}{\d t}\mathcal{F}_{0,s}(f)&=-2\mathcal{D}_{0,s}(f)+\frac2d\mathcal{R}_{0,s}(f,f,f)\\
    \label{ddtFs1f}
    \frac{\d}{\d t}\mathcal{F}_{1,s}(f)&=-2\mathcal{D}_{1,s}(f)+\mathcal{B}_s(f)+\partial_\tau\mathcal{F}_{1,s}(f)+\frac2d\mathcal{R}_{1,s}(f,f,f).   
  \end{align}

  In order to simplify the expressions coming from the estimations of~\Cref{prop-estimates-FDB}, we make the following assumptions, which will be compatible with the final conditions on the coefficients.
  \begin{assumption}\label{assumption-coeffs}
    In all this section, whenever we pick some parameters~$\alpha,\beta,\gamma,\delta$ and~$\nu$, we suppose
    \[\alpha\leqslant1, \quad \beta\leqslant\frac12\sqrt{\alpha\gamma}, \quad \delta\leqslant\gamma\leqslant\sqrt{\beta\delta}, \quad \text{and}\quad \nu\leqslant\min(1,\frac{\gamma}2,\delta).\]
  \end{assumption}
  Let us then summarize the inequalities we can obtain from~\Cref{prop-estimates-FDB} and~\Cref{lemma-Sg2}.
  \begin{lemma}\label{lemma-inequalities-s} For any smooth function~$f$ and any~$s\geqslant0$, we have
    \begin{equation}
      \nu\,\mathcal{F}_{0,s+1}\leqslant \mathcal{F}_{1,s},\quad \nu\,\mathcal{D}_{0,s+1}\leqslant\mathcal{D}_{1,s},\label{eq-nu-sp1} 
    \end{equation}
    and
    \begin{align}
      \|\op{A}f\|_{\dot{H}^{s,0}}&\leqslant\sqrt{\frac{2}{\alpha}\,\mathcal{F}_{1,s}}, & \|\op{A}f\|_{\dot{H}^{s,0}}&=\sqrt{\,\mathcal{D}_{0,s}}, & \|\op{A}^2f\|_{\dot{H}^{s,0}}&\leqslant\sqrt{\frac{2}{\alpha}\,\mathcal{D}_{1,s}},\label{eq-A-A2-s}\\
      \|\op{S}f\|_{\dot{H}^{s,0}}&\leqslant\sqrt{\frac{2}{\gamma}\,\mathcal{F}_{1,s}}, & \|\op{S}f\|_{\dot{H}^{s,0}}&\leqslant\sqrt{\frac1{\beta}\,\mathcal{D}_{1,s}},& \|\op{S}\otimes\op{A}f\|_{\dot{H}^{s,0}}&\leqslant\sqrt{\frac{2}{\gamma}\,\mathcal{F}_{1,s}}\label{eq-S-SA-s},\\
      \|\op{T}f\|_{\dot{H}^{s,0}}&\leqslant\sqrt{\frac{1}{\delta}\,\mathcal{F}_{1,s}}, & \|\op{T}f\|_{\dot{H}^{s,0}}&\leqslant\sqrt{\frac{2}{(d-1)\sqrt{\beta\delta}}\,\mathcal{D}_{1,s}},& \|\op{TA}f\|_{\dot{H}^{s,0}}&\leqslant\sqrt{\frac{1}{\delta}\,\mathcal{F}_{1,s}}.\label{eq-T-TA-s}
    \end{align}
   Finally, letting~$\mu=1$ for~$d=2$,~$\mu\in(0,1]$ for~$d=3$, or~$\mu=0$ for~$d=4$, we have
    \begin{equation}
      \|(-\op{\Delta}_x)^{\frac12}\mathbb{J}[f]\|_{\dot{H}^{s}(\mathbb{R}^d)}\lesssim\sqrt{\frac{1}{\gamma^{1-\frac{\mu}2}\delta^{\frac{\mu}2}}\,\mathcal{F}_{1,s}},\quad  \|(-\op{\Delta}_x)^{\frac12}\mathbb{J}[f]\|_{\dot{H}^{s}(\mathbb{R}^d)}\lesssim\sqrt{\frac{1}{\beta^{1-\frac{\mu}4}\delta^{\frac{\mu}4}}\,\mathcal{D}_{1,s}},\label{eq-gradJf-s}
    \end{equation}
    where the symbol~$\lesssim$ depicts inequality up to a multiplicative constant which only depends on~$\mu$ and~$d$.
  \end{lemma}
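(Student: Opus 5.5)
The plan is to reduce everything to the case $s=0$: apply \Cref{prop-estimates-FDB} to the function $(-\op{\Delta}_x)^{\frac{s}2}f$. This is legitimate because $(-\op{\Delta}_x)^{\frac s2}$ is a Fourier multiplier in $x$. It therefore commutes with $\op{A}$, $\op{S}$, $\op{T}$ and is selfadjoint, so each quantity $\|\op{B}f\|_{\dot{H}^{s,0}}$ equals $\|\op{B}(-\op{\Delta}_x)^{\frac s2}f\|_2$. The same holds for $\mathcal{F}_{1,s}$, $\mathcal{D}_{1,s}$ by \Cref{def-Fs-Ds-Bs-Rs}. Strictly, $(-\op{\Delta}_x)^{\frac s2}f$ is not compactly supported, but it lies in the Schwartz class in $x$, and the estimates of \Cref{prop-estimates-FDB} extend to it by density. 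It then suffices to prove all inequalities for $s=0$.

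Under \Cref{assumption-coeffs} we have $\beta/\sqrt{\alpha\gamma}\leqslant\frac12$, so \eqref{eq-F-Hs1-lower} gives
\[\mathcal{F}_1\geqslant\tfrac12\alpha\|\op{A}f\|_2^2+\tfrac12\gamma\|\op{S}f\|_2^2+\delta\|\op{T}f\|_2^2.\]
This yields the first column of \eqref{eq-A-A2-s}--\eqref{eq-T-TA-s}. The equality $\|\op{A}f\|_2^2=\mathcal{D}_0$ is by definition. Likewise \eqref{eq-D-Hs2-lower1} gives
\[\mathcal{D}_1\geqslant\tfrac12\alpha\|\op{A}^2f\|^2+\tfrac12\gamma\|\op{S}\otimes\op{A}f\|^2+\beta\|\op{S}f\|^2+\delta\|\op{TA}f\|^2.\]
The bound on $\|\op{T}f\|$ comes from \eqref{eq-D-Hs2-lower2}, since $\delta\leqslant\gamma\leqslant\sqrt{\beta\delta}$ forces $\delta\leqslant\beta$ and hence $\min(\beta,\sqrt{\beta\delta})=\sqrt{\beta\delta}$.

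The statement as printed places $\|\op{S}\otimes\op{A}f\|$ and $\|\op{TA}f\|$ against $\mathcal{F}_{1,s}$. I read these as typos for $\mathcal{D}_{1,s}$, which is what the lower bound above controls. Otherwise one would need the extra derivative, and that is not available from $\mathcal{F}_1$.

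For \eqref{eq-nu-sp1}, write $\mathcal{F}_{0,1}=\|\op{S}f\|^2+\|\op{T}f\|^2$ using \eqref{eq-decomp-Deltax}. Then $\nu\leqslant\min(\gamma/2,\delta)$ gives $\nu\mathcal{F}_{0,1}\leqslant\mathcal{F}_1$. Similarly, $\mathcal{D}_{0,1}=Q_{-\op{A}^2(-\op{\Delta}_x)}=Q_{\op{A\Delta}_x\op{A}}=\|\op{S}\otimes\op{A}f\|^2+\|\op{TA}f\|^2$, because $\op{A}$ commutes with $\op{\Delta}_x$. The lower bound on $\mathcal{D}_1$ then gives $\nu\mathcal{D}_{0,1}\leqslant\mathcal{D}_1$.

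Finally, for \eqref{eq-gradJf-s}, invoke \Cref{lemma-Sg2} as in the proof of \Cref{prop-short-time-kineticBM}:
\[\|\nabla_x\mathbb{J}[f]\|^2\lesssim\|\op{S}f\|^2+\|\op{S}f\|^{2-\mu}\|\op{T}f\|^{\mu}.\]
Insert the bounds $\|\op{S}f\|^2\leqslant\frac2\gamma\mathcal{F}_1$ and $\|\op{T}f\|^2\leqslant\frac1\delta\mathcal{F}_1$. Using $\delta\leqslant\gamma$, the first term $\gamma^{-1}$ is dominated by $\gamma^{-1+\mu/2}\delta^{-\mu/2}$. For the dissipation version, use $\|\op{S}f\|^2\leqslant\frac1\beta\mathcal{D}_1$ and $\|\op{T}f\|^2\lesssim(\beta\delta)^{-1/2}\mathcal{D}_1$. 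This produces $\beta^{-1+\mu/2}(\beta\delta)^{-\mu/4}=\beta^{-1+\mu/4}\delta^{-\mu/4}$, and again $\delta\leqslant\beta$ lets it dominate $\beta^{-1}$.

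The main obstacle is only bookkeeping. One must check that \Cref{assumption-coeffs} indeed gives $\delta\leqslant\beta$ (it does, from $\gamma^2\leqslant\beta\delta$ and $\delta\leqslant\gamma$), and that \Cref{lemma-Sg2} applies componentwise after the multiplier $(-\op{\Delta}_x)^{s/2}$, which commutes with $\mathbb{J}$.
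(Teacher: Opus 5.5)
Your proposal is correct and matches the paper's proof essentially step by step. You apply \Cref{prop-estimates-FDB} to $(-\op{\Delta}_x)^{\frac s2}f$ (its lower bounds, with $1-\frac{\beta}{\sqrt{\alpha\gamma}}\geqslant\frac12$ and $\delta\leqslant\beta$ from \Cref{assumption-coeffs}), use $\op{\Delta}_x=\op{S}^2+\op{T}^2$ and $\nu\leqslant\min(\frac\gamma2,\delta)$ for \eqref{eq-nu-sp1}, and combine \Cref{lemma-Sg2} with these bounds for \eqref{eq-gradJf-s}. You also correctly read the third-column bounds on $\|\op{S}\otimes\op{A}f\|_{\dot{H}^{s,0}}$ and $\|\op{TA}f\|_{\dot{H}^{s,0}}$ as being against $\mathcal{D}_{1,s}$ rather than $\mathcal{F}_{1,s}$: as printed they would fail, since they involve one more derivative than $\mathcal{F}_{1,s}$ controls, and the $\mathcal{D}_{1,s}$ version is exactly what the paper uses later in the proof of \Cref{prop-estimates-barR}.
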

  \begin{proof}[\textbf{Proof of~\Cref{lemma-inequalities-s}}]
    We have thanks to~\Cref{assumption-coeffs}:\[\nu Q_{-\op{\Delta}_x}=\nu Q_{-\op{S}^2-\op{T}^2}\leqslant\frac{\gamma}2Q_{-\op{S}^2}+\delta Q_{-\op{T}^2},\quad \nu Q_{\op{A}^2\op{\Delta}_x}=\nu Q_{\op{A}\op{S}^2\op{A}+\op{A}\op{T}^2\op{A}}\leqslant\frac{\gamma}2Q_{\op{A}\op{S}^2\op{A}}+\delta Q_{\op{A}\op{T}^2\op{A}},\] and therefore we get~$\nu\mathcal{F}_{0,1}\leqslant\mathcal{F}_{1,0}$ and~$\nu\mathcal{D}_{0,1}\leqslant\mathcal{D}_{1,0}$ thanks to~\eqref{eq-F-Hs1-lower} and~\eqref{eq-D-Hs2-lower1}, which applied to~$(-\op{\Delta}_x)^{\frac{s}2}f$ give~\eqref{eq-nu-sp1}. The next inequalities~\eqref{eq-A-A2-s},~\eqref{eq-S-SA-s} and~\eqref{eq-T-TA-s} directly come from~\Cref{prop-estimates-FDB} applied to~$(-\op{\Delta}_x)^{\frac{s}2}f$, since~\Cref{assumption-coeffs} implies that~$\delta\leqslant\beta$ and therefore~$\sqrt{\beta\delta}\leqslant\beta$. Finally, the last inequalities come from~\Cref{lemma-Sg2} applied to~$(-\op{\Delta}_x)^{\frac{s}2}f$, giving
    \[\|(-\op{\Delta}_x)^{\frac12}\mathbb{J}[f]\|_{\dot{H}^{s}(\mathbb{R}^d)}\lesssim\|\op{S}f\|_{\dot{H}^{s,0}}+\|\op{S}f\|_{\dot{H}^{s,0}}^{1-\frac{\mu}2}\|\op{T}f\|_{\dot{H}^{s,0}}^{\frac{\mu}2}.\]
    Using~\eqref{eq-S-SA-s},~\eqref{eq-T-TA-s} and~\Cref{assumption-coeffs}, we obtain~\eqref{eq-gradJf-s}.
  \end{proof}

We finally define the main functionals, combining the different orders.
\begin{definition}\label{def-F-D-B-R-combined}
Let~$m>\frac{d}2$,~$\theta\in[0,1]$ and~$\alpha,\beta,\gamma,\delta,\nu$ be given as positive parameters (once more, they may depend on a parameter~$\tau$). For a smooth function~$f$, we define its total energy functional~$\overline{\mathcal{F}}(f)$ as
  \[
    \overline{\mathcal{F}}(f)=\mathcal{F}_{0,0}(f)+\mathcal{F}_{0,m-\theta}(f)+\mathcal{F}_{1,0}(f)+\mathcal{F}_{1,m-\theta}(f)+\nu\,\mathcal{F}_{1,m-\theta+1}(f).%\label{def-F-combined}
  \]
  Similarly, we define the total energy dissipation, total “bad” term, and total remainder (for smooth functions~$g$ and~$h$) as
  \begin{align*}
    \overline{\mathcal{D}}(f)&=\mathcal{D}_{0,0}(f)+\mathcal{D}_{0,m-\theta}(f)+\mathcal{D}_{1,0}(f)+\mathcal{D}_{1,m-\theta}(f)+\nu\,\mathcal{D}_{1,m-\theta+1}(f),\\%\label{def-D-combined}\\
    \overline{\mathcal{B}}(f)&=\mathcal{B}_{0}(f)+\mathcal{B}_{m-\theta}(f)+\nu\,\mathcal{B}_{m-\theta+1}(f),\\%\label{def-B-combined}\\
    \overline{\mathcal{R}}(f,g,h)&=\mathcal{R}_{0,0}(f,g,h)+\mathcal{R}_{0,m-\theta}(f,g,h)+\mathcal{R}_{1,0}(f,g,h)+\mathcal{R}_{1,m-\theta}(f,g,h)+\nu\,\mathcal{R}_{1,m-\theta+1}(f,g,h).%\label{def-R-combined}
  \end{align*}

\end{definition}

In virtue of~\eqref{ddtFs0f}--\eqref{ddtFs1f}, we therefore have, when~$f$ is a smooth solution of~\eqref{eq-VFP-condensed}:
  \begin{equation*}
    %\label{ddtFf-combined}
    \frac{\d}{\d t}\overline{\mathcal{F}}(f)=-2\overline{\mathcal{D}}(f)+\overline{\mathcal{B}}(f)+\partial_\tau\overline{\mathcal{F}}(f)+\frac2d\overline{\mathcal{R}}(f,f,f).
  \end{equation*}

  We thus want to control the terms~$\overline{\mathcal{B}}$,~$\partial_\tau\overline{\mathcal{F}}(f)$ and~$\overline{\mathcal{R}}(f,f,f)$ in terms of the dissipation~$\overline{\mathcal{D}}$.  
  The first two (quadratic terms) are directly estimated as in~\Cref{section-kinetic-BM-Hypo-Hypo} for the kinetic Brownian motion, as stated in the following proposition. The study of the cubic quantities, to control~$\overline{\mathcal{R}}(f,g,h)$, is the object of the next subsection.
  \begin{proposition}
    \label{prop-bar-B-dtauF} For any smooth function~$f$, we have
\begin{equation}
\label{estimate-bar-B-D} \overline{\mathcal{B}}(f)\leqslant\Big(\frac{2(\alpha+(d-1)\beta)}{\sqrt{\beta}}+\frac{2|\gamma-\delta|}{\sqrt{\beta\delta}}\Big)\overline{\mathcal{D}}(f),
\end{equation}
\begin{equation}
\label{estimate-bar-dtauF-D} \partial_\tau\overline{\mathcal{F}}(f)\leqslant\Big(|\alpha'|+\frac{|(\nu\alpha)'|}{\nu}+\frac{|\beta'|+\frac{|(\nu\beta)'|}{\nu}}{\sqrt{\beta}}+\frac{\max(|\gamma'|,\frac{|(\nu\gamma)'|}{\nu})}{\beta}+\frac{2\max(|\delta'|,\frac{|(\nu\delta)'|}{\nu})}{(d-1)\sqrt{\beta\delta}}\Big)\overline{\mathcal{D}}(f).
\end{equation}
  \end{proposition}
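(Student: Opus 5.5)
The plan is to reduce both estimates to the quadratic estimates of \Cref{prop-estimates-FDB} and \Cref{prop-conditions-decreaseF-time}, applied to $(-\op{\Delta}_x)^{\frac{s}2}f$ for each $s\in\{0,m-\theta,m-\theta+1\}$, and then sum with the weights of \Cref{def-F-D-B-R-combined}. The key point is that $(-\op{\Delta}_x)^{\frac{s}2}$ commutes with $\op{A}$, $\op{S}$, $\op{T}$ (these are multiplications in Fourier variables in $x$ times operators in $v$). Hence $\mathcal{B}_s(f)=\mathcal{B}((-\op{\Delta}_x)^{\frac s2}f)$, and likewise for $\mathcal{D}_{0,s}$, $\mathcal{D}_{1,s}$ and $\partial_\tau\mathcal{F}_{1,s}$.

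For \eqref{estimate-bar-B-D}, I apply \eqref{estimate-B-D0-D1} to $(-\op{\Delta}_x)^{\frac s2}f$. This gives
\[
\mathcal{B}_s\leqslant\tfrac{\alpha+(d-1)\beta}{\sqrt\beta}(\mathcal{D}_{0,s}+\mathcal{D}_{1,s})+\tfrac{2|\gamma-\delta|}{\sqrt{\beta\delta}}\mathcal{D}_{1,s}.
\]
For $s=0$ and $s=m-\theta$, the terms $\mathcal{D}_{0,s}+\mathcal{D}_{1,s}$ appear in $\overline{\mathcal{D}}$ with coefficient one. For the top term $\nu\mathcal{B}_{m-\theta+1}$, the piece $\nu\mathcal{D}_{1,m-\theta+1}$ is present in $\overline{\mathcal{D}}$, but $\nu\mathcal{D}_{0,m-\theta+1}$ is not. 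I control it by \eqref{eq-nu-sp1} with $s=m-\theta$: $\nu\mathcal{D}_{0,m-\theta+1}\leqslant\mathcal{D}_{1,m-\theta}$. The dissipation $\mathcal{D}_{1,m-\theta}$ is thus used twice, once for $\mathcal{B}_{m-\theta}$ and once here, which explains the factor $2$ in front of $\frac{\alpha+(d-1)\beta}{\sqrt\beta}$. I will bound the $|\gamma-\delta|$ term more crudely by $2$ times as well, since it only involves $\mathcal{D}_{1,s}$ and appears at most once per level. Summing gives the claim.

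For \eqref{estimate-bar-dtauF-D}, I first note that $\partial_\tau\overline{\mathcal{F}}$ differentiates the coefficients $\alpha,\dots,\delta$ in $\mathcal{F}_{1,0}$ and $\mathcal{F}_{1,m-\theta}$, and the products $\nu\alpha,\dots,\nu\delta$ in $\nu\mathcal{F}_{1,m-\theta+1}$. I then bound each of the four monomials separately, as in the proof of \Cref{prop-conditions-decreaseF-time}:
\[
Q_{-\op A^2}\leqslant\mathcal{D}_0,\qquad |Q_{-\op{AS-SA}}|\leqslant\beta^{-1/2}(\mathcal{D}_0+\mathcal{D}_1),\qquad Q_{-\op S^2}\leqslant\beta^{-1}\mathcal{D}_1,\qquad Q_{-\op T^2}\leqslant\tfrac{2}{(d-1)\sqrt{\beta\delta}}\mathcal{D}_1 .
\]
In the last bound, \Cref{assumption-coeffs} gives $\sqrt{\beta\delta}\leqslant\beta$, so the minimum equals $\sqrt{\beta\delta}$. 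I apply these at levels $s$, multiplying the level $m-\theta+1$ terms by $|(\nu\cdot)'|$ and writing $|(\nu\alpha)'|=\nu\cdot\frac{|(\nu\alpha)'|}{\nu}$ so that $\nu\mathcal{D}_{\cdot,m-\theta+1}$ appears. Whenever a $\nu\mathcal{D}_{0,m-\theta+1}$ appears (from the $\alpha$ and $\beta$ terms), I convert it again via \eqref{eq-nu-sp1} into $\mathcal{D}_{1,m-\theta}$.

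The main obstacle is this bookkeeping. The $\alpha$ and $\beta$ contributions at the lower levels and the top level both land on the same dissipation $\mathcal{D}_{1,m-\theta}$ or $\mathcal{D}_{0,\cdot}$, so their coefficients add, giving $|\alpha'|+\frac{|(\nu\alpha)'|}{\nu}$ and the analogous sum for $\beta$. The $\gamma$ and $\delta$ terms only involve $\mathcal{D}_{1,s}$ at distinct levels with distinct weights, so a maximum suffices. I will check level by level that every $\mathcal{D}$ piece in $\overline{\mathcal{D}}$ is charged at most as claimed.
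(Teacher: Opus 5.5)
Your proposal is correct and is essentially the paper's own proof. You apply \eqref{estimate-B-D0-D1} and the bound of \Cref{prop-conditions-decreaseF-time} to $(-\op{\Delta}_x)^{\frac s2}f$ for $s\in\{0,m-\theta,m-\theta+1\}$, convert the stray $\nu\mathcal{D}_{0,m-\theta+1}$ terms into $\mathcal{D}_{1,m-\theta}$ via \eqref{eq-nu-sp1}, and sum while checking that each piece of $\overline{\mathcal{D}}$ (notably $\mathcal{D}_{1,m-\theta}$, which is charged from two levels) gets at most the stated coefficient. One small clarification: the $|\gamma-\delta|$ term needs no extra doubling, because the factor $2$ in $\frac{2|\gamma-\delta|}{\sqrt{\beta\delta}}$ already comes from \eqref{estimate-B-D0-D1} and that term charges each $\mathcal{D}_{1,\cdot}$ piece only once.
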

  \begin{proof}[\textbf{Proof of~\Cref{prop-bar-B-dtauF}}]
    From~\Cref{def-Fs-Ds-Bs-Rs}, thanks to~\Cref{prop-estimates-FDB} we directly obtain
    \begin{equation}\label{estimate-Bs}
      \mathcal{B}_s(f)\leqslant\frac{\alpha+(d-1)\beta}{\sqrt{\beta}}\big(\mathcal{D}_{0,s}(f)+\mathcal{D}_{1,s}(f)\big)+\frac{2|\gamma-\delta|}{\sqrt{\beta\delta}}\mathcal{D}_{1,s}(f).
    \end{equation}
    For~$s=m-\theta+1$, we use~\eqref{eq-nu-sp1} to get 
\[  \nu\mathcal{B}_{m-\theta+1}(f)\leqslant\frac{\alpha+(d-1)\beta}{\sqrt{\beta}}\big(\mathcal{D}_{1,m-\theta}(f)+\nu\mathcal{D}_{1,m-\theta+1}(f)\big)+\frac{2|\gamma-\delta|}{\sqrt{\beta\delta}}\nu\mathcal{D}_{1,m-\theta+1}(f),
\]
to which we add~\eqref{estimate-Bs} (for~$s=0$ and~$s=m-\theta$), thus obtaining~\eqref{estimate-bar-B-D} thanks to~\Cref{def-F-D-B-R-combined}.

Similarly, thanks to~\Cref{prop-conditions-decreaseF-time}, we have
\begin{equation}\label{estimate-dtauFs}
  \partial_\tau\mathcal{F}_{1,s}\leqslant|\alpha'|\mathcal{D}_{0,s}+\frac{|\beta'|}{\sqrt{\beta}}(\mathcal{D}_{0,s}+\mathcal{D}_{1,s})+\Big(\frac{|\gamma'|}{\beta}+\frac{2|\delta'|}{(d-1)\sqrt{\beta\delta}}\Big)\mathcal{D}_{1,s},
\end{equation}
and with the same estimates we obtain   \[\partial_\tau(\nu\mathcal{F}_{1,m-\theta+1})\leqslant|(\nu\alpha)'|\mathcal{D}_{0,m-\theta+1}+\frac{|(\nu\beta)'|}{\sqrt{\beta}}(\mathcal{D}_{0,m-\theta+1}+\mathcal{D}_{1,m-\theta+1})+\Big(\frac{|(\nu\gamma)'|}{\beta}+\frac{2|(\nu\delta)'|}{(d-1)\sqrt{\beta\delta}}\Big)\mathcal{D}_{1,m-\theta+1}.\]
    Since~$\partial_\tau\mathcal{F}_{0,0}=\partial_\tau\mathcal{F}_{0,m-\theta}=0$, we use~\eqref{eq-nu-sp1} and add~\eqref{estimate-dtauFs} (for~$s=0$ and~$s=m-\theta$), to get~\eqref{estimate-bar-dtauF-D} thanks to~\Cref{def-F-D-B-R-combined}.
  \end{proof}

\subsection{Estimates for the cubic terms.}

We want to provide estimates for the term~$\overline{\mathcal{R}}(f,g,h)$, and a first preliminary step consists in deriving some estimates for the cubic quantities~$R_{\op{X}}$ given by~\Cref{def-RX}, for the different operators~$\op{X}$ that will be needed for our energy functionals.

 \begin{lemma}\label{prop-estimates-RX}
Let $m>\frac{d}2$, and $s\geqslant0$ (not necessarily integers). We use the symbol~$\lesssim$ to denote inequalities up to a multiplicative constant, which may only depend on~$m$, $s$ and the dimension~$d$.
To have lighter notation, we denote
\[N_{m,s}(g,h)=\|g\|_{H^{m,0}} \,\|\mathbb{J}[h]\|_{\dot{H}^{s}(\mathbb{R}^d)} + \|g\|_{\dot{H}^{s,0}}\,\|\mathbb{J}[h]\|_{H^{m}(\mathbb{R}^d)} ,\]
and we then have the following estimates:
  \begin{align}
    R_{(-\op{\Delta}_x)^s}(f,g,h) &\lesssim \|\op Af\|_{\dot{H}^{s,0}}\,N_{m,s}(g,h), \label{eq-RDeltas}\\
    R_{-\op A^2(-\op{\Delta}_x)^s}(f,g,h) &\lesssim \|\op A^2f\|_{\dot{H}^{s,0}}\big(N_{m,s}(g,h)+N_{m,s}(\op{A}g,h)\big), \label{eq-RA2Deltas}\\
    R_{-(\op{AS+SA})(-\op{\Delta}_x)^s}(f,g,h) &\lesssim \Big(\|\op S\otimes\op Af\|_{\dot{H}^{s,0}}+\|\op{T}f\|_{\dot{H}^{s,0}}\Big)\big(N_{m,s}(g,h)+N_{m,s}(\op{A}g,h)\big),\label{eq-RASpSADeltas}\\
    R_{-\op{S}^2(-\op{\Delta}_x)^s}(f,g,h) &\lesssim \big(\|\op S\otimes\op Af\|_{\dot{H}^{s,0}}+\|\op{T}f\|_{\dot{H}^{s,0}}\big) \Big(N_{m,s}(\op{S}g,h)+N_{m,s}(g,(-\op{\Delta}_x)^{\frac12}h)\big),\label{eq-RS2Deltas} \\
    R_{-\op{T}^2(-\op{\Delta}_x)^s}(f,g,h) &\lesssim \big(\|\op T\op Af\|_{\dot{H}^{s,0}}+\|\op{S}f\|_{\dot{H}^{s,0}}\big) \Big(N_{m,s}(\op{T}g,h)+N_{m,s}(g,(-\op{\Delta}_x)^{\frac12}h)\big).\label{eq-RT2Deltas}
  \end{align}
\end{lemma}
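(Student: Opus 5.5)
The plan is to reduce every estimate to a single product (Kato--Ponce type) inequality in the space variable, after moving operators across the integral with the (anti)selfadjointness properties of \Cref{lemmeA2,lemmeAU}. The basic tool is the fractional Leibniz rule in~$x$, applied pointwise in~$v$ and then integrated over~$\S$. Since~$m>\frac d2$, the space~$H^m(\mathbb{R}^d)$ embeds in~$L^\infty$, and for functions~$g(x,v)$ and~$J(x)$ we have
\[\|(-\op{\Delta}_x)^{\frac s2}(gJ)\|_{L^2}\lesssim\|g\|_{L^\infty_x L^2_v}\|J\|_{\dot H^s}+\|g\|_{\dot H^{s,0}}\|J\|_{L^\infty}\lesssim N_{m,s}(g,h)\]
when~$J=\mathbb{J}[h]$. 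Here~$\|g\|_{L^\infty_xL^2_v}\lesssim\|g\|_{H^{m,0}}$ follows from Sobolev embedding applied to the $L^2(\S)$-valued function $x\mapsto g(x,\cdot)$. The factor~$\op{U}h=d\,v\wedge\mathbb{J}[h]$ is a smooth bounded function of~$v$ times~$\mathbb{J}[h](x)$, so multiplying by it costs nothing in~$v$.

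For~\eqref{eq-RDeltas}, I move~$(-\op{\Delta}_x)^{s/2}$ onto the other factor, using that~$\op{A}$ and~$\op{\Delta}_x$ commute. This gives
\[R_{(-\op{\Delta}_x)^s}=\int \op{A}(-\op{\Delta}_x)^{\frac s2}f\cdot(-\op{\Delta}_x)^{\frac s2}(g\,\op{U}h),\]
and Cauchy--Schwarz with the product rule concludes. For~\eqref{eq-RA2Deltas}, one~$\op{A}$ from~$\op{A}\op{A}^2$ is transferred by antiselfadjointness onto~$g\,\op{U}h$. The result is~$\op{A}^2f$ paired with~$\op{A}(g\op{U}h)=(\op{A}g)\op{U}h+g\,\op{A}\op{U}h$. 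Since~$\op{A}$ acting on~$v\wedge\mathbb{J}$ is again bounded by~$|\mathbb{J}[h]|$ (\Cref{lemmeA2}--\ref{comAijvk}), this yields~$N_{m,s}(\op{A}g,h)+N_{m,s}(g,h)$.

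For the operators involving~$\op{S}$ and~$\op{T}$, the key point is to never let an~$x$-derivative fall on~$g$ or~$h$ beyond what the right-hand sides allow. For~\eqref{eq-RASpSADeltas}, I write~$\op{A}(\op{AS}+\op{SA})=2\op{A}\op{S}\op{A}+[\op{A},\op{AS}]$-type rearrangements, using~$[\op{A},\op{S}]=-(d-1)\op{T}$. I then move one~$\op{A}$ onto~$g\op{U}h$, which leaves~$\op{S}\otimes\op{A}f$ or~$\op{T}f$ on the~$f$ side. For~\eqref{eq-RS2Deltas} and~\eqref{eq-RT2Deltas}, I write~$\op{A}\op{S}^2=\op{S}\op{A}\op{S}-(d-1)\op{T}\op{S}$ and commute further so that the~$f$ factor carries~$\op{S}\op{A}f$ or~$\op{T}f$ (respectively~$\op{T}\op{A}f$ or~$\op{S}f$, via~\eqref{comAT2}). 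The remaining~$\op{S}$ (resp.~$\op{T}$) is transferred onto~$g\op{U}h$ by antiselfadjointness in~$x$. The Leibniz rule for this first-order operator gives~$(\op{S}g)\op{U}h+g\,\op{S}(\op{U}h)$. The second piece involves~$\nabla_x\mathbb{J}[h]=\mathbb{J}[(-\op{\Delta}_x)^{1/2}\cdot]$ up to Riesz transforms, which are bounded on~$\dot H^s$ and~$H^m$; this explains the term~$N_{m,s}(g,(-\op{\Delta}_x)^{\frac12}h)$.

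The main obstacle is the bookkeeping in~\eqref{eq-RS2Deltas}--\eqref{eq-RT2Deltas}. Each commutator must produce only terms with at most one~$x$-derivative and one~$v$-derivative on~$f$, of the allowed types, and one~$x$-derivative distributed on~$g$ or~$\mathbb{J}[h]$. I expect to handle this by checking the identities~$[\op{A},\op{S}]=-(d-1)\op{T}$, $[\op{A},\op{T}]=\op{S}$ and~$[\op{S},\op{T}]=0$ term by term, and by absorbing all~$v$-dependent multipliers coming from~$\op{U}$ as bounded functions. I also need to check that the contractions over wedge indices (the parentheses conventions) reduce by Cauchy--Schwarz to~$\|\op{S}\otimes\op{A}f\|$, as in~\eqref{ASSAvsAS2A}.
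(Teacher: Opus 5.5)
Your proposal is correct and follows the paper's proof essentially step by step. The ingredients are the same: a product estimate in $x$ at fixed $v$ (the paper uses the elementary Fourier/Young bound $\|u_1u_2\|_{\dot H^s}\lesssim\|\widehat{u_1}\|_{L^1}\|u_2\|_{\dot H^s}+\|\widehat{u_2}\|_{L^1}\|u_1\|_{\dot H^s}$ with $\|\widehat{u}\|_{L^1}\lesssim\|u\|_{H^m}$ instead of Kato--Ponce, which plays the same role), moving the outer $\op{A}$, or one $\op{S}$ or $\op{T}$, onto $g\,\op{U}h$ followed by the Leibniz rule, and the commutator relations $\op{AS}+\op{SA}=2\op{SA}-(d-1)\op{T}$ and~\eqref{comAT2}. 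One slip in the bookkeeping: the free index in $R_{-\op{S}^2(-\op{\Delta}_x)^s}$ sits on $\op{A}$, so the identity you need is $\op{A}_{i,j}\op{S}^2=\op{S}^2\op{A}_{i,j}-2\op{T}\op{S}_{i,j}$ (from $[\op{A},\op{S}^2]=-[\op{A},\op{T}^2]$, since $\op{A}$ commutes with $\op{\Delta}_x$), with no factor $d-1$, because $[\op{A},\op{S}]=-(d-1)\op{T}$ only holds for the full contraction over matching indices; this does not affect the $\lesssim$ estimate.
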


\begin{proof}[\textbf{Proof of \Cref{prop-estimates-RX}}]
  We use the following useful estimate, when~$u_1$ and~$u_2$ are functions of the space variable~$x$ only, which can be obtained by Fourier decomposition thanks to Young’s convolution inequality, and the fact that $(\widehat{u_1}* \widehat{u_2})(\cdot)|\cdot|^s\leqslant2^{s-1}(\widehat{u_1(\cdot)|\cdot|^s}*\widehat{u_2}+\widehat{u_1}*\widehat{u_2(\cdot)|\cdot|^s})$:
  \begin{equation*}
    %\label{estimateh1h2Hs}
    \|u_1u_2\|_{\dot{H}^{s}} \lesssim \|\widehat{u_1}\|_{L^1}\|u_2\|_{\dot{H}^s}+\|\widehat{u_2}\|_{L^1}\|u_1\|_{\dot{H}^s}\lesssim\|u_1\|_{H^{m}}\|u_2\|_{\dot{H}^s}+\|u_2\|_{H^m}\|u_1\|_{\dot{H}^s} .
  \end{equation*}

  From there, we can then estimate products in~$\dot{H}^{s,0}$ as long as we have a uniform bound in~$v$ for one of the two terms, which, as we will see, is the case for any term involving~$\op{U}$. For instance we have
  \[ \|g\op{U}h\|^2_{\dot{H}^{s,0}}=\int_{\S}\|g(v,\cdot)\op Uh(v,\cdot)\|_{\dot{H}^s}^2\d v \lesssim \int_{\S}\|g(v,\cdot)\|_{\dot{H}^s}^2\|\op Uh(v,\cdot)\|_{H^m}^2+\|g(v,\cdot)\|_{H^m}^2\|\op Uh(v,\cdot)\|_{\dot{H}^s}^2\,\d v. \]
Now we fix~$v\in\S$. Recalling that $|v\wedge\mathcal{J}|^2=|P_{v^\perp}\mathcal{J}|^2\leqslant|\mathcal{J}|^2$ for all~$\mathcal{J}\in\mathbb{R}^d$, we have
\begin{equation*}%\label{UhL2Rd}
  \|\op Uh(v,\cdot)\|_{L^2(\mathbb{R}^d)}^2=d^2\|P_{v^\perp}\mathbb{J}[h]\|_{L^ 2(\mathbb{R}^d)}^2\leqslant d^2\|\mathbb{J}[h]\|_{L^ 2(\mathbb{R}^d)}^2.
\end{equation*}
Therefore, using the fact that~$\op{U}$ and~$\mathbb{J}$ commute with any (fractional) derivative in~$x$, we may replace~$h$ by~$(-\op{\Delta}_x)^{\frac{s}2}h$ in the previous inequality to get
\[\|\op Uh(v,\cdot)\|_{\dot{H}^s(\mathbb{R}^d)}^2\lesssim \|\mathbb{J}[h]\|^2_{\dot{H}^{s}(\mathbb{R}^d)},\]
and combining both similarly to get~$H^m$ instead of~$\dot{H}^s$. We obtain
\begin{equation}
  \|g\op{U}h\|_{\dot{H}^{s,0}}\lesssim \big(\|g\|^2_{\dot{H}^{s,0}}\|\mathbb{J}[h]\|^2_{H^{m}(\mathbb{R}^d)} + \|g\|^2_{H^{m,0}}\|\mathbb{J}[h]\|^2_{\dot{H}^{s}(\mathbb{R}^d)}\big)^{\frac12}\lesssim N_{m,s}(g,h),\label{gUh-Hs0}
\end{equation}
and this provides the first estimate~\eqref{eq-RDeltas}, as we have
    \[R_{(-\op{\Delta}_x)^s}(f,g,h)  = \int_{\mathbb{R}^d\times\S}(-\op{\Delta}_x)^{\frac{s}2}\op Af(-\op{\Delta}_x)^{\frac{s}2}(g\op Uh) \leqslant \|\op Af\|_{\dot{H}^{s,0}}\|g\op{U}h\|_{\dot{H}^{s,0}}.\]

For the next estimates, we begin similarly, estimating~$\|\op{A}(g\op{U}h)\|_{\dot{H}^{s,0}}\leqslant\|\op{A}g\op{U}h\|_{\dot{H}^{s,0}}+\|g\op{AU}h\|_{\dot{H}^{s,0}}$ first. We have
    \[ \|\op{A}g\op{U}h\|^2_{\dot{H}^{s,0}}\lesssim \int_{\S}\|\op{A}g(v,\cdot)\|_{\dot{H}^s}^2\|\op Uh(v,\cdot)\|_{H^m}^2+\|\op{A}g(v,\cdot)\|_{H^m}^2\|\op Uh(v,\cdot)\|_{\dot{H}^s}^2\,\d v \leqslant N_{m,s}(\op{A}g,h)^2.\]
Using~\Cref{lemmeAU}--\ref{U2}, we also get as before that for all~$v\in\S$,
    \[\|\op{AU}h(v,\cdot)\|_{L^2(\mathbb{R}^d)}^2=d^2(d-1)^2\|v\cdot\mathbb{J}[h]\|_{L^ 2(\mathbb{R}^d)}^2\leqslant d^2(d-1)^2\|\mathbb{J}[h]\|_{L^ 2(\mathbb{R}^d)}^2,\]
    and we proceed the same (replacing~$h$ with~$(-\Delta_x)^{\frac s2}h$) to get 
   \[\|g\op{AU}h\|^2_{\dot{H}^{s,0}}\lesssim \int_{\S}\|g(v,\cdot)\|_{\dot{H}^s}^2\|\op{AU}h(v,\cdot)\|_{H^m}^2+\|g(v,\cdot)\|_{H^m}^2\|\op{AU}h(v,\cdot)\|_{\dot{H}^s}^2\,\d v \leqslant N_{m,s}(g,h)^2.
   \]

This provides the second estimate~\eqref{eq-RA2Deltas}, as we have
    \[R_{-\op{A}^2(-\op{\Delta}_x)^s}(f,g,h) \leqslant \|\op A^2f\|_{\dot{H}^{s,0}}\|\op{A}(g\op{U}h)\|_{\dot{H}^{s,0}}\lesssim \|\op A^2f\|_{\dot{H}^{s,0}} \big(N_{m,s}(\op{A}g,h)+N_{m,s}(g,\op{A}h)\big).\]
To get the third estimate~\eqref{eq-RASpSADeltas}, we use the fact that that~$\op{A}\op{S}+\op{S}\op{A}=2\op{S}\op{A}-(d-1)\op{T}$. We obtain
  \[ R_{-(\op{A}\op{S}+\op{S}\op{A})(-\op{\Delta}^{s})}(f,g,h)\leqslant\|(2\op{SA}-(d-1)\op{T})f\|_{\dot{H}^{s,0}}\|\op{A}(g\op{U}h)\|_{\dot{H}^{s,0}}.
  \]
  and we use~\eqref{ASSAvsAS2A} to get $\|\op{SA}f\|_{\dot{H}^{s,0}}=Q_{(\op{AS})(\op{SA})}((-\op{\Delta}_x)^{\frac s2}f)\leqslant Q_{\op{AS}^2\op{A}}((-\op{\Delta}_x)^{\frac s2}f)=\|\op{S}\otimes\op{A}f\|_{\dot{H}^{s,0}}$, which gives~\eqref{eq-RASpSADeltas}.
  We treat the fourth estimate~\eqref{eq-RA2Deltas} similarly, using the commutator~\eqref{comAT2} and the fact that~$\op{A}$ commutes with~$\op{\Delta}_x=\op{S}^2+\op{T}^2$, which gives~$[\op{A},\op{S^2}]=-[\op{A},\op{T}^2]=-2\op{TS}$:
  \begin{align}\nonumber
    R_{-\op S^2(-\op{\Delta}_x)^s}&(f,g,h) = \int_{\mathbb{R}^d\times\S}(-\op{\Delta}_x)^{\frac{s}2}(-2\op{ST}+\op S^2\op{A})f(-\op{\Delta}_x)^{\frac{s}2}(g\op Uh)\\             \nonumber &\leqslant2\|\op Tf\|_{\dot{H}^{s,0}}\,\|\op S(g\op Uh)\|_{\dot{H}^{s,0}}+\sum_{i<j}\|\op S\op A_{i,j}f\|_{\dot{H}^{s,0}}\,\|\op  S(g\op U_{i,j}h)\|_{\dot{H}^{s,0}}\\
    &\leqslant(2\|\op Tf\|_{\dot{H}^{s,0}}+\|\op{S}\otimes\op{A}f\|_{\dot{H}^{s,0}})\Bigg(\Big(\sum_{i<j}\|\op  Sg\,\op U_{i,j}h\|^2_{\dot{H}^{s,0}}\Big)^{\frac12}+\Big(\sum_{i<j}\|g\,\op  S\op U_{i,j}h)\|^2_{\dot{H}^{s,0}}\Big)^{\frac12}\Bigg), \label{RS2Deltas-firstestimate}
  \end{align}
  where we have proceeded as in~\eqref{ASSAvsAS2A} (using the fact that~$S_{i,j}$ and~$S_{k,\ell}$ commute) to get
  \begin{align*}
    \|\op S(g\op Uh)\|_{\dot{H}^{s,0}}^2&=\sum_{i<j}\sum_{k<\ell}\langle\op S_{i,j}(g\op U_{i,j} h),\op S_{k,\ell}(g\op U_{k,\ell} h)\rangle_{\dot{H}^{s,0}}=\sum_{i<j}\sum_{k<l}\langle\op S_{k,\ell}(g\op U_{i,j} h),\op S_{i,j}(g\op U_{k,\ell} h)\rangle_{\dot{H}^{s,0}}\\
    &\leqslant\sum_{i<j}\sum_{k<\ell}\frac12\big(\|\op S_{k,\ell}(g\op U_{i,j} h)\|_{\dot{H}^{s,0}}+\|\op S_{i,j}(g\op U_{k,\ell} h)\|_{\dot{H}^{s,0}}\big)=\sum_{i<j}\|\op S(g\op U_{i,j} h)\|_{\dot{H}^{s,0}}^2,
  \end{align*}
  and then used Cauchy--Schwarz. We then have, proceeding as usual:
  \begin{align*}
    \sum_{i<j}\|\op  Sg\,\op U_{i,j}h\|^2_{\dot{H}^{s,0}}
    &\lesssim \sum_{i<j}\int_{\S}\big(\|\op  Sg(v,\cdot)\|^2_{\dot{H}^s}\|\op U_{i,j}h(v,\cdot)\|^2_{H^{m}}+ \|\op  Sg(v,\cdot)\|^2_{H^m}\|\op U_{i,j}h(v,\cdot)\|^2_{\dot{H}^{s}}\big)\d v\\
    &\lesssim \int_{\S}\big(\|\op  Sg(v,\cdot)\|^2_{\dot{H}^s}\|\op Uh(v,\cdot)\|^2_{H^{m}}+ \|\op  Sg(v,\cdot)\|^2_{H^m}\|\op Uh(v,\cdot)\|^2_{\dot{H}^{s}}\big)\d v\lesssim N_{m,s}(\op Sg,h)^2.
  \end{align*}
  Similarly, we have
  \[\sum_{i<j}\|g\,\op  S\op U_{i,j}h\|^2_{\dot{H}^{s,0}}\lesssim \sum_{i<j}\int_{\S}\big(\|g(v,\cdot)\|^2_{\dot{H}^s}\|\op  S\op U_{i,j}h(v,\cdot)\|^2_{H^{m}}+ \|g(v,\cdot)\|^2_{H^m}\|\op  S\op U_{i,j}h(v,\cdot)\|^2_{\dot{H}^{s}}\big)\d v.\]
  Fixing~$v\in\S$ and recalling that~$\op{S}=v\wedge\nabla_x$, we have
  \[\|\op  S\op U_{i,j}h(v,\cdot)\|^2_{L^2(\mathbb{R}^d)}=\|v\wedge\nabla_x\op U_{i,j}h(v,\cdot)\|_{L^2(\mathbb{R}^d)}^2\leqslant\|\nabla_x\op U_{i,j}h(v,\cdot)\|_{L^2(\mathbb{R}^d)}^2=\|\op U_{i,j}(-\op{\Delta}_x)^{\frac12}h(v,\cdot)\|_{L^2(\mathbb{R}^d)}^2,\]
  and therefore using~\eqref{J2}, we obtain
  \[\sum_{i<j}\|\op  S\op U_{i,j}h(v,\cdot)\|^2_{L^2(\mathbb{R}^d)}\leqslant\|\op U(-\op{\Delta}_x)^{\frac12}h(v,\cdot)\|_{L^2(\mathbb{R}^d)}^2\leqslant d^2\|\mathbb{J}[(-\op{\Delta}_x)^{\frac12}h]\|^2_{L^2(\mathbb{R}^d)}.\]
  As usual, we replace~$h$ by $(-\op{\Delta}_xh)^{\frac s2}$, and we obtain
 \[\sum_{i<j}\|g\,\op  S\op U_{i,j}h\|^2_{\dot{H}^{s,0}}\lesssim \|g\|^2_{\dot{H}^{s,0}}\|\mathbb{J}[(-\op{\Delta}_x)^{\frac12}h]\|_{H^{m}}^2+\|g\|^2_{H^{m}}\|\mathbb{J}[(-\op{\Delta}_x)^{\frac12}h]\|_{\dot{H}^{s}}^2\lesssim N_{m,s}(g,(-\op{\Delta}_x)^{\frac12}h)^2.\]
 Thanks to~\eqref{RS2Deltas-firstestimate}, that gives~\eqref{eq-RS2Deltas}.
 Finally, we also use the commutator~\eqref{comAT2} to get
  \[
    R_{-\op T^2(-\op{\Delta}_x)^s}(f,g,h) = \int_{\mathbb{R}^d\times\S}(-\op{\Delta}_x)^{\frac{s}2}(2\op{TS}+\op T^2\op{A})f(-\op{\Delta}_x)^{\frac{s}2}(g\op Uh)\leqslant\big(\|\op Sf\|_{\dot{H}^{s,0}}+\|\op{TA}f\|_{\dot{H}^{s,0}}\big)\,\|\op T(g\op Uh)\|_{\dot{H}^{s,0}}.
  \]
  As before, we have~$\|\op Tg\op Uh\|_{\dot{H}^{s,0}}\leqslant N_{m,s}(\op{T}g,h)$, and
  \[\|\op{TU}h(v,\cdot)\|_{L^2(\mathbb{R}^d)}^2=\|v\cdot\nabla_x\op Uh(v,\cdot)\|_{L^2(\mathbb{R}^d)}^2\leqslant\|\nabla_x\op Uh(v,\cdot)\|_{L^2(\mathbb{R}^d)}^2\leqslant d^2\|\mathbb{J}[(-\op{\Delta}_x)^{\frac12}h]\|^2_{L^2(\mathbb{R}^d)},\]
  which gives that~$\|g\op{TU}h\|_{\dot{H}^{s,0}}\leqslant N_{m,s}(g,(-\op{\Delta}_x)^{\frac12}h)$. We apply therefore the same method to get~\eqref{eq-RT2Deltas}, and this ends the proof. 
\end{proof}

We are now ready to provide the estimates for~$\overline{\mathcal{R}(f,g,h)}$. We provide two types of estimates, one for which we control thanks to~$\overline{\mathcal{F}}$ only for~$g$ and~$h$, which will be used to have short time bounds, while the second one uses a mix of~$\overline{\mathcal{F}}$ and~$\overline{\mathcal{D}}$ for~$g$ and~$h$, and will be useful for the decay in large time.

\begin{proposition}\label{prop-estimates-barR}
 We let~$\mu=1$ when~$d=2$,~$\mu\in(0,1]$ for~$d=3$, or~$\mu=0$ for~$d\geqslant4$. Here we use the symbol~$\lesssim$ to denote inequalities up to a multiplicative constant that only depends on~$m$,~$\theta$,~$\mu$ and the dimension~$d$ (and in particular this constant does not depend on the choice of the coefficients, as soon as they satisfy~\Cref{assumption-coeffs}). Then for any smooth functions~$f$,~$g$ and~$h$
  \begin{align}
    \overline{\mathcal{R}}(f,g,h) &\lesssim\sqrt{\nu^{-\theta}}\,\Big(\frac{\gamma}{\delta}\Big)^{\frac{\mu}4}\,\sqrt{\,\overline{\mathcal{D}}(f)\overline{\mathcal{F}}(g)\overline{\mathcal{F}}(h)},\label{estimate-bar-R-D-F2}\\
    \overline{\mathcal{R}}(f,g,h)&\lesssim \sqrt{\frac{\nu^{-\theta}}{\alpha}} \,\sqrt{\,\overline{\mathcal{D}}(f)}\Big(\sqrt{\,\overline{\mathcal{D}}(g)\overline{\mathcal{F}}(h)}+\sqrt{\,\overline{\mathcal{F}}(g)\overline{\mathcal{D}}(h)}\Big).\label{estimate-bar-R-D2-F}          
  \end{align}
\end{proposition}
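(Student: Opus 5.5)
The plan is to treat $\overline{\mathcal{R}}(f,g,h)$ term by term according to \Cref{def-F-D-B-R-combined}. Each piece $\mathcal{R}_{0,s}$ or $\mathcal{R}_{1,s}$ with $s\in\{0,m-\theta,m-\theta+1\}$ is a combination of the cubic quantities $R_{\op X}$ already estimated in \Cref{prop-estimates-RX}. Indeed, $\mathcal{R}_{1,s}=\alpha R_{-\op A^2(-\op\Delta_x)^s}+\beta R_{-(\op{AS+SA})(-\op\Delta_x)^s}+\gamma R_{-\op S^2(-\op\Delta_x)^s}+\delta R_{-\op T^2(-\op\Delta_x)^s}$.

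\textbf{Step 1: the $f$-factors.} In each estimate of \Cref{prop-estimates-RX}, the norm on $f$ is $\|\op Af\|$, $\|\op A^2f\|$, $\|\op S\otimes\op Af\|+\|\op Tf\|$, or $\|\op{TA}f\|+\|\op Sf\|$ in $\dot H^{s,0}$. Multiplying by the corresponding coefficient $\alpha,\beta,\gamma,\delta$ and using \Cref{lemma-inequalities-s}, each of these is bounded by $\sqrt{\mathcal{D}_{0,s}}$ or $\sqrt{c\,\mathcal{D}_{1,s}}$ with harmless powers of the coefficients. For example:
\begin{itemize}
\item $\alpha\|\op A^2f\|\lesssim\sqrt{\alpha\mathcal{D}_{1,s}}$;
\item $\gamma\|\op S\otimes\op Af\|\lesssim\sqrt{\gamma\,\mathcal{D}_{1,s}}$, via the lower bound \eqref{eq-D-Hs2-lower1};
\item $\beta\|\op Tf\|\lesssim \beta(\beta\delta)^{-1/4}\sqrt{\mathcal D_{1,s}}\le\sqrt{\mathcal D_{1,s}}$, using $\delta\le\beta$.
\end{itemize}
By \Cref{assumption-coeffs} all of these prefactors are $\lesssim1$. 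For $s=m-\theta+1$, the extra weight $\nu$ in front gives $\nu\sqrt{\mathcal{D}_{1,m-\theta+1}}\le\sqrt\nu\sqrt{\overline{\mathcal D}(f)}$.

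\textbf{Step 2: the $g,h$-factors.} We must bound $N_{m,s}(g,h)$, $N_{m,s}(\op Ag,h)$, $N_{m,s}(\op Sg,h)$, $N_{m,s}(\op Tg,h)$ and $N_{m,s}(g,(-\op\Delta_x)^{1/2}h)$, in which norms of type $H^m$ appear. The key observation is that $m$ is not one of our indices; we have only $m-\theta$ and $m-\theta+1$. So we interpolate,
\[
\|u\|_{\dot H^{m}}\le\|u\|_{\dot H^{m-\theta}}^{1-\theta}\|u\|_{\dot H^{m-\theta+1}}^{\theta}.
\]
Since $\nu\mathcal F_{0,m-\theta+1}\le\mathcal F_{1,m-\theta}$ by \eqref{eq-nu-sp1}, and the $\nu\mathcal F_{1,m-\theta+1}$ term controls one more derivative, each $H^m$ norm costs a factor $\nu^{-\theta/2}$. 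This is where $\sqrt{\nu^{-\theta}}$ comes from. Low norms ($s=0$) are controlled by $\mathcal F_{0,0}$, $\mathcal F_{1,0}$, and \eqref{J2fL2} gives $\|\mathbb J[h]\|\lesssim\|h\|$.

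For \eqref{estimate-bar-R-D-F2}, the derivatives on $g$ are bounded through $\mathcal F$:
\begin{itemize}
\item $\|\op Ag\|\le\sqrt{2\mathcal F_{1}/\alpha}$, with $\alpha$ cancelled by the $\alpha$ in front of the term;
\item $\|\op Sg\|\le\sqrt{2\mathcal F_1/\gamma}$, balanced by the $\gamma$ in front, since $\gamma\sqrt{\gamma^{-1}}\cdot\sqrt{\gamma}\lesssim\gamma\le1$;
\item $\|\op Tg\|\le\sqrt{\mathcal F_1/\delta}$, balanced by $\delta$.
\end{itemize}
The term $(-\op\Delta_x)^{1/2}\mathbb J[h]$ uses \eqref{eq-gradJf-s}: $\|\nabla_x\mathbb J[h]\|_{\dot H^s}\lesssim(\gamma^{1-\mu/2}\delta^{\mu/2})^{-1/2}\sqrt{\mathcal F_{1,s}}$. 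Multiplied by the $\gamma$ prefactor this gives $(\gamma/\delta)^{\mu/4}$, and with the $\delta$ prefactor it is smaller. This yields the factor $(\gamma/\delta)^{\mu/4}$. The term $\beta\,N(\op Ag,h)$ costs $\beta/\sqrt\alpha\le\sqrt\gamma\le1$.

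For \eqref{estimate-bar-R-D2-F}, we instead put the derivative on $g$ or $h$ into $\mathcal D$:
\begin{itemize}
\item $\|\op Ag\|=\sqrt{\mathcal D_0}$ and $\|\op Sg\|\le\sqrt{\mathcal D_1/\beta}$;
\item the $\op T$ bound from \eqref{eq-T-TA-s};
\item the $\mathcal D$-version of \eqref{eq-gradJf-s}, or more simply \eqref{J2fL2} with $\mathcal D_0$, for undifferentiated $h$.
\end{itemize}
The worst loss is $1/\sqrt\alpha$, from bounding $\|\op Ag\|_{H^m}$ in the $\alpha\mathcal{R}$ term when only $\mathcal F$ of $h$ is available, or conversely. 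One must check that the $\gamma,\delta$ terms cost at most $\alpha^{-1/2}$: for instance $\gamma\|\op Sg\|\lesssim\gamma\beta^{-1/2}\sqrt{\mathcal D}\le\sqrt{\mathcal D}$ since $\gamma\le\sqrt{\beta\delta}\le\beta$.

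\textbf{Main obstacle.} The hardest part is the bookkeeping of coefficient powers in the $s=m-\theta+1$ block combined with interpolation. There, the $H^m$ norms of $\op Sg$ or $\nabla_x\mathbb J[h]$ sit at derivative order up to $m+1$, which must be reached by interpolating between the levels $m-\theta$ and $m-\theta+1$ of $\mathcal F_1$ (hence $\theta\in[0,1]$). I would first verify that every exponent of $\nu$ lands exactly at $-\theta/2$ after using the $\nu$ weight in front, and treat the $\mu$-interpolation of \Cref{lemma-Sg2} as the last, most delicate estimate.
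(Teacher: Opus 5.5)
Your plan is essentially the paper's proof. You interpolate the $H^{m}$ norms between the levels $m-\theta$ and $m-\theta+1$, using \eqref{eq-nu-sp1} to produce $\sqrt{\nu^{-\theta}}$. You bound the $f$-factors of \Cref{prop-estimates-RX} by $\mathcal{D}_{0,s}$ or $\mathcal{D}_{1,s}$. You put the $g,h$-factors in $\overline{\mathcal{F}}$ for \eqref{estimate-bar-R-D-F2}, and one of them in $\overline{\mathcal{D}}$ for \eqref{estimate-bar-R-D2-F}. Finally you sum over $s$ with the weight $\nu$.

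Some of your coefficient bookkeeping and one citation need fixing. Spend each coefficient only once, against the product of the $f$-factor and the $g,h$-factor. For example, the $\beta$-term closes through $\beta\gamma^{-1/2}\alpha^{-1/2}\leqslant\frac12$. The bound $(\beta\delta)^{-1/4}\leqslant\gamma^{-1/2}$ follows from $\gamma\leqslant\sqrt{\beta\delta}$; it does not follow from $\delta\leqslant\beta$, which points the wrong way. For undifferentiated $h$ in \eqref{estimate-bar-R-D2-F}, the right tool is $d(d-1)|\mathbb{J}[h]|^2\leqslant\|\op{A}h\|^2_{L^2(\S)}$ from \Cref{lemmeAU}--\ref{U2}, not \eqref{J2fL2}.
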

\begin{proof}[\textbf{Proof of~\Cref{prop-estimates-barR}}]
  We start with interpolation estimates, in order to control the~$H^{m,0}$ norms appearing in~\Cref{prop-estimates-RX}.

  Since~$m=(1-\theta)(m-\theta)+\theta(m-\theta+1)$, we get~$\|f\|_{\dot{H}^{m,0}}\leqslant\|f\|^{1-\theta}_{\dot{H}^{m-\theta,0}}\|f\|^{\theta}_{\dot{H}^{m-\theta+1,0}}=\mathcal{F}_{0,m-\theta}^{\frac{1-\theta}2}\mathcal{F}_{0,m-\theta+1}^{\frac{\theta}2}$. Using~\eqref{eq-nu-sp1} and~\Cref{assumption-coeffs}, we have
  \[\|f\|_{H^{m,0}}^2=\|f\|_{L_2}^2+\|f\|_{\dot{H}^{m,0}}^2\leqslant\mathcal{F}_{0,0}+\nu^{-\theta}\,\mathcal{F}_{0,m-\theta}^{1-\theta}\mathcal{F}_{1,m-\theta}^\theta\leqslant\nu^{-\theta}\,\big(\mathcal{F}_{0,0}+\max(\mathcal{F}_{0,m-\theta},\mathcal{F}_{1,m-\theta})\big),\]
  and we use~\Cref{def-F-D-B-R-combined} to obtain:
  \begin{equation}\label{fHm-barF}
    \|f\|_{H^{m,0}}\leqslant\sqrt{\nu^{-\theta}\,\overline{\mathcal{F}}(f)}.
  \end{equation}
Similarly, using~\eqref{eq-A-A2-s},~\eqref{eq-S-SA-s},~\eqref{eq-T-TA-s} and~\eqref{eq-gradJf-s} we obtain 
  \begin{align}
    \|\op{A}f\|_{H^{m,0}}&\leqslant\sqrt{\frac{2\nu^{-\theta}}{\alpha}\,\overline{\mathcal{F}}(f)},& \|\op{A}f\|_{H^{m,0}}&\leqslant\sqrt{\nu^{-\theta}\,\overline{\mathcal{D}}(f)}\label{AfHm-barFD},\\
   \|\op{S}f\|_{H^{m,0}}&\leqslant\sqrt{\frac{2\nu^{-\theta}}{\gamma}\,\overline{\mathcal{F}}(f)},&\quad \|\op{S}f\|_{H^{m,0}}&\leqslant\sqrt{\frac{\nu^{-\theta}}{\beta}\,\overline{\mathcal{D}}(f)},\label{SfHm-barFD} \\
    \|\op{T}f\|_{H^{m,0}}&\leqslant\sqrt{\frac{\nu^{-\theta}}{\delta}\,\overline{\mathcal{F}}(f)},&\quad \|\op{T}f\|_{H^{m,0}}&\leqslant\sqrt{\frac{2\nu^{-\theta}}{(d-1)\sqrt{\beta\delta}}\,\overline{\mathcal{D}}(f)},\label{TfHm-barFD}\\
    \|(-\op{\Delta}_x)^{\frac12}\mathbb{J}[f]\|_{H^{m}(\mathbb{R}^d)}&\lesssim\sqrt{\frac{\nu^{-\theta}}{\gamma^{1-\frac{\mu}2}\delta^{\frac{\mu}2}}\,\overline{\mathcal{F}}(f)},&
   \|(-\op{\Delta}_x)^{\frac12}\mathbb{J}[f]\|_{H^{m}(\mathbb{R}^d)}&\lesssim\sqrt{\frac{\nu^{-\theta}}{\beta^{1-\frac{\mu}4}\delta^{\frac{\mu}4}}\,\overline{\mathcal{D}}(f)}.\label{gradJfHm-barFD}
  \end{align}

  We are ready to plug these inequalities into the expressions of~$N_{m,s}(\cdot,\cdot)$ appearing in~\Cref{prop-estimates-RX}. For the first type of estimate, when we want to control with~$\sqrt{\,\overline{\mathcal{F}}(g)}$ and~$\sqrt{\,\overline{\mathcal{F}}(h)}$, we use the fact that~$\|\mathbb{J}[h]\|_{\dot{H}^{s}(\mathbb{R}^d)}\leqslant\|h\|_{\dot{H}^{s,0}}=\sqrt{\,\mathcal{F}_{0,s}(h)}$ and~$\|\mathbb{J}[h]\|_{{H}^{m}(\mathbb{R}^d)}\leqslant\|h\|_{{H}^{m,0}}$. For instance we get, thanks to~\eqref{fHm-barF}:
  \begin{equation*}
    %\label{Nmgh-F}
    N_{m,s}(g,h)\leqslant\sqrt{\nu^{-\theta}}\Big(\sqrt{\,\overline{\mathcal{F}}(g)\mathcal{F}_{0,s}(h)}+\sqrt{\,\mathcal{F}_{0,s}(g)\overline{\mathcal{F}}(h)}\Big).
    \end{equation*}
Therefore we get, using~\eqref{eq-RDeltas} and the fact that~$\|\op{A}f\|_{\dot{H}^{s,0}}=\sqrt{\mathcal{D}_{0,s}(f)}$,
\[\overline{\mathcal{R}}_{0,s}(f,g,h)=R_{(-\op{\Delta}_x)^s}(f,g,h)\lesssim\sqrt{\nu^{-\theta}\,\mathcal{D}_{0,s}(f)}\Big(\sqrt{\,\overline{\mathcal{F}}(g)\mathcal{F}_{0,s}(h)}+\sqrt{\,\mathcal{F}_{0,s}(g)\overline{\mathcal{F}}(h)}\Big),\]
    which gives for~$s=0$ and~$s=m-\theta$ that
    \begin{equation}\label{estimate-R0-F-1-2}
      \overline{\mathcal{R}}_{0,0}(f,g,h)\lesssim\sqrt{\,\overline{\mathcal{D}}(f)}\sqrt{\nu^{-\theta}\,\overline{\mathcal{F}}(g)\overline{\mathcal{F}}(h)},\quad  \overline{\mathcal{R}}_{0,m-\theta}(f,g,h)\lesssim\sqrt{\,\overline{\mathcal{D}}(f)}\sqrt{\nu^{-\theta}\,\overline{\mathcal{F}}(g)\overline{\mathcal{F}}(h)}.
    \end{equation}
    With the same procedure, we get, thanks to~\eqref{fHm-barF}--\eqref{gradJfHm-barFD} and~\eqref{eq-A-A2-s}--\eqref{eq-gradJf-s}:
\begin{align*}
  % \label{NmAgh-F}
  N_{m,s}(\op{A}g,h)&\leqslant\sqrt{\frac{2\nu^{-\theta}}{\alpha}}\Big(\sqrt{\,\overline{\mathcal{F}}(g)\mathcal{F}_{0,s}(h)}+\sqrt{\,\mathcal{F}_{1,s}(g)\overline{\mathcal{F}}(h)}\Big),\\
  % \label{NmSgh-F}
  N_{m,s}(\op{S}g,h)&\leqslant\sqrt{\frac{2\nu^{-\theta}}{\gamma}}\Big(\sqrt{\,\overline{\mathcal{F}}(g)\mathcal{F}_{0,s}(h)}+\sqrt{\,\mathcal{F}_{1,s}(g)\overline{\mathcal{F}}(h)}\Big), \\
  % \label{NmTgh-F}
  N_{m,s}(\op{T}g,h)&\leqslant\sqrt{\frac{\nu^{-\theta}}{\delta}}\Big(\sqrt{\,\overline{\mathcal{F}}(g)\mathcal{F}_{0,s}(h)}+\sqrt{\,\mathcal{F}_{1,s}(g)\overline{\mathcal{F}}(h)}\Big),\\
  % \label{NmgDh-F}
  N_{m,s}(g,(-\op{\Delta}_x)^{\frac12}h)&\lesssim\sqrt{\frac{\nu^{-\theta}}{\gamma^{1-\frac{\mu}2}\delta^{\frac{\mu}2}}}\Big(\sqrt{\,\overline{\mathcal{F}}(g)\mathcal{F}_{1,s}(h)}+\sqrt{\,\mathcal{F}_{0,s}(g)\overline{\mathcal{F}}(h)}\Big).
\end{align*} 

Therefore we use~\eqref{eq-RA2Deltas}--\eqref{eq-RT2Deltas}, controlling the first term of the product at the right-hand side of the inequalities using the corresponding estimate in~\eqref{eq-A-A2-s}--\eqref{eq-gradJf-s} involving dissipation terms~$\mathcal{D}_{0,s}$ or~$\mathcal{D}_{1,s}$ and we get, thanks to~\Cref{assumption-coeffs},
 \begin{gather*}
R_{-\op{A}^2(-\op{\Delta}_x)^s}(f,g,h)\lesssim\frac{\sqrt{\nu^{-\theta}}}{\alpha}\sqrt{\,\mathcal{D}_{1,s}(f)}\Big(\sqrt{\,\overline{\mathcal{F}}(g)\mathcal{F}_{0,s}(h)}+\sqrt{\,\mathcal{F}_{1,s}(g)\overline{\mathcal{F}}(h)}\Big),\displaybreak[0]\\
  \begin{split} R_{-\op{(AS+SA)}(-\op{\Delta}_x)^s}(f,g,h)&\lesssim\Big(\sqrt{\frac{2}{\gamma}}+\sqrt{\frac{2}{(d-1)\sqrt{\beta\delta}}}\Big)\sqrt{\,\mathcal{D}_{1,s}(f)} \sqrt{\frac{\nu^{-\theta}}{\alpha}}\Big(\sqrt{\,\overline{\mathcal{F}}(g)\mathcal{F}_{0,s}(h)}+\sqrt{\,\mathcal{F}_{1,s}(g)\overline{\mathcal{F}}(h)}\Big)\\
&\lesssim\frac{\sqrt{\nu^{-\theta}}}{\beta}\sqrt{\,\mathcal{D}_{1,s}(f)}\Big(\sqrt{\,\overline{\mathcal{F}}(g)\mathcal{F}_{0,s}(h)}+\sqrt{\,\mathcal{F}_{1,s}(g)\overline{\mathcal{F}}(h)}\Big),
\end{split} \displaybreak[0] \\
\begin{split} R_{-\op{S}^2(-\op{\Delta}_x)^s}(f,g,h)&\lesssim\Big(\sqrt{\frac{2}{\gamma}}+\sqrt{\frac{2}{(d-1)\sqrt{\beta\delta}}}\Big)\sqrt{\,\mathcal{D}_{1,s}(f)}\bigg[ \sqrt{\frac{2\nu^{-\theta}}{\gamma}}\Big(\sqrt{\,\overline{\mathcal{F}}(g)\mathcal{F}_{0,s}(h)}+\sqrt{\,\mathcal{F}_{1,s}(g)\overline{\mathcal{F}}(h)}\Big)\\
   &\hspace{6cm}+\sqrt{\frac{\nu^{-\theta}}{\gamma^{1-\frac{\mu}2}\delta^{\frac{\mu}2}}}\Big(\sqrt{\,\overline{\mathcal{F}}(g)\mathcal{F}_{1,s}(h)}+\sqrt{\,\mathcal{F}_{0,s}(g)\overline{\mathcal{F}}(h)}\Big)\bigg],\\
 &\lesssim\frac{\sqrt{\nu^{-\theta}}}{\gamma^{1-\frac{\mu}4}\delta^{\frac{\mu}4}}\sqrt{\,\mathcal{D}_{1,s}(f)}\Big(\sqrt{\,\overline{\mathcal{F}}(g)\big(\mathcal{F}_{0,s}(h)+\mathcal{F}_{1,s}(h)\big)}+\sqrt{\big(\mathcal{F}_{1,s}(g)+\mathcal{F}_{0,s}(g)\big)\overline{\mathcal{F}}(h)}\Big),
\end{split}\displaybreak[0]\\
\begin{split} R_{-\op{T}^2(-\op{\Delta}_x)^s}(f,g,h)&\lesssim\Big(\sqrt{\frac{1}{\delta}}+\sqrt{\frac{1}{\beta}}\Big)\sqrt{\,\mathcal{D}_{1,s}(f)}\bigg[ \sqrt{\frac{\nu^{-\theta}}{\delta}}\Big(\sqrt{\,\overline{\mathcal{F}}(g)\mathcal{F}_{0,s}(h)}+\sqrt{\,\mathcal{F}_{1,s}(g)\overline{\mathcal{F}}(h)}\Big)\\
   &\hspace{5cm}+\sqrt{\frac{\nu^{-\theta}}{\gamma^{1-\frac{\mu}2}\delta^{\frac{\mu}2}}}\Big(\sqrt{\,\overline{\mathcal{F}}(g)\mathcal{F}_{1,s}(h)}+\sqrt{\,\mathcal{F}_{0,s}(g)\overline{\mathcal{F}}(h)}\Big)\bigg],\\
 &\lesssim\frac{\sqrt{\nu^{-\theta}}}{\delta}\sqrt{\,\mathcal{D}_{1,s}(f)}\Big(\sqrt{\,\overline{\mathcal{F}}(g)\big(\mathcal{F}_{0,s}(h)+\mathcal{F}_{1,s}(h)\big)}+\sqrt{\big(\mathcal{F}_{1,s}(g)+\mathcal{F}_{0,s}(g)\big)\overline{\mathcal{F}}(h)}\Big).
\end{split}
\end{gather*}
These four estimates give, since~$\frac{\gamma}{\delta}\geqslant1$, that
\begin{align*}
  \mathcal{R}_{1,s}(f,g,h)&=R_{\big(-\alpha\,\op{A}^2-\beta\,(\op{AS+SA})-\gamma\,\op{S}^2-\delta\,\op{T}^2\big)(-\op{\Delta}_x)^s}(f,g,h)\\
  &\lesssim\sqrt{\nu^{-\theta}}\,\Big(\frac{\gamma}{\delta}\Big)^{\frac{\mu}4}\,\sqrt{\,\mathcal{D}_{1,s}(f)}\Big(\sqrt{\,\overline{\mathcal{F}}(g)\big(\mathcal{F}_{0,s}(h)+\mathcal{F}_{1,s}(h)\big)}+\sqrt{\big(\mathcal{F}_{1,s}(g)+\mathcal{F}_{0,s}(g)\big)\overline{\mathcal{F}}(h)}\Big).
\end{align*}
Summing this last estimate for~$s=0$,~$s=m-\theta$, and~$s=m-\theta+1$ (multiplied in this case by~$\nu$, and using~\eqref{eq-nu-sp1} to transform the occurences of~$\mathcal{F}_{0,m-\theta+1}$ and~$\mathcal{D}_{0,m-\theta+1}$), and adding~\eqref{estimate-R0-F-1-2}, we obtain~\eqref{estimate-bar-R-D-F2} according to~\Cref{def-F-D-B-R-combined}.

To produce the second estimate~\eqref{estimate-bar-R-D-F2}, we need to control terms of the form~$N_{m,s}(\cdot,\cdot)$ thanks to two types of inequalities. When $\op{A}g$,~$\op{S}g$, $\op{T}g$ or~$(-\op{\Delta}_x)^{\frac12}h$ is one of the argument, we may always chose an inequality involving~$\overline{\mathcal{D}}$ among~\eqref{AfHm-barFD}--\eqref{gradJfHm-barFD} when it concerns a~$H^m$ norm, or involving~$\mathcal{D}_{0,s}$ or~$\mathcal{D}_{1,s}$ among~\eqref{eq-A-A2-s}--\eqref{eq-gradJf-s} when it concerns a~$\dot{H}^s$ norm. Using the same strategy as before, we obtain
\begin{align*}
  % \label{NmAgh-D}
  N_{m,s}(\op{A}g,h)&\leqslant\sqrt{\nu^{-\theta}}\Big(\sqrt{\,\overline{\mathcal{D}}(g)\mathcal{F}_{0,s}(h)}+\sqrt{\,\mathcal{D}_{0,s}(g)\overline{\mathcal{F}}(h)}\Big),\\
  % \label{NmSgh-D}
  N_{m,s}(\op{S}g,h)&\leqslant\sqrt{\frac{\nu^{-\theta}}{\beta}}\Big(\sqrt{\,\overline{\mathcal{D}}(g)\mathcal{F}_{0,s}(h)}+\sqrt{\,\mathcal{D}_{1,s}(g)\overline{\mathcal{F}}(h)}\Big), \\
  % \label{NmTgh-D}
  N_{m,s}(\op{T}g,h)&\leqslant\sqrt{\frac{2\nu^{-\theta}}{(d-1)\sqrt{\beta\delta}}}\Big(\sqrt{\,\overline{\mathcal{D}}(g)\mathcal{F}_{0,s}(h)}+\sqrt{\,\mathcal{D}_{1,s}(g)\overline{\mathcal{F}}(h)}\Big),\\
  % \label{NmgDh-D}
  N_{m,s}(g,(-\op{\Delta}_x)^{\frac12}h)&\lesssim\sqrt{\frac{\nu^{-\theta}}{\beta^{1-\frac{\mu}4}\delta^{\frac{\mu}4}}}\Big(\sqrt{\,\overline{\mathcal{F}}(g)\mathcal{D}_{1,s}(h)}+\sqrt{\,\mathcal{F}_{0,s}(g)\overline{\mathcal{D}}(h)}\Big).
\end{align*} 
However, the last quantity needed to control,~$N_{m,s}(g,h)$ cannot be directly obtained exactly the same way, since we do not have an estimate with~$D_{0,s}$ corresponding to~\eqref{fHm-barF} with~$\overline{\mathcal{D}}$ instead. But we use the fact that~$\|\mathbb{J}[h]\|_{\dot{H}^{s}(\mathbb{R}^d)}\leqslant\frac{1}{\sqrt{d(d-1)}}\|\op{A}h\|_{\dot{H}^{s}(\mathbb{R}^d)}$, thanks to~\Cref{lemmeAU}--\ref{U2}, applied to~$(-\op{\Delta}_x)^{\frac{s}2}h$. Therefore we use~\eqref{eq-A-A2-s} and~\eqref{AfHm-barFD} to obtain
\begin{equation}
\label{Nmgh-D}N_{m,s}(g,h)\leqslant\sqrt{\frac{2\nu^{-\theta}}{d(d-1)\alpha}}\Big(\sqrt{\,\overline{\mathcal{F}}(g)\mathcal{D}_{0,s}(h)}+\sqrt{\,\mathcal{F}_{0,s}(g)\overline{\mathcal{D}}(h)}\Big).
\end{equation}
We can now procede as before, using~\eqref{eq-RDeltas}--\eqref{eq-RT2Deltas}, controlling the first term of the product at the right-hand side of the inequalities using the corresponding estimate in~\eqref{eq-A-A2-s}--\eqref{eq-gradJf-s} involving dissipation terms~$\mathcal{D}_{0,s}$ or~$\mathcal{D}_{1,s}$ and we get,
\begin{align*}
  R_{(-\op{\Delta}_x)^s}(f,g,h) &\lesssim\sqrt{\frac{\nu^{-\theta}}{\alpha}}\sqrt{\,\mathcal{D}_{0,s}(f)}\Big(\sqrt{\,\overline{\mathcal{F}}(g)\mathcal{D}_{0,s}(h)}+\sqrt{\,\mathcal{F}_{0,s}(g)\overline{\mathcal{D}}(h)}\Big)\displaybreak[0]\\
  R_{-\op{A}^2(-\op{\Delta}_x)^s}(f,g,h)&\lesssim\frac{\sqrt{\nu^{-\theta}}}{\alpha}\sqrt{\,\mathcal{D}_{1,s}(f)}\Big(\sqrt{\,\overline{\mathcal{F}}(g)\mathcal{D}_{0,s}(h)}+\sqrt{\,\mathcal{F}_{0,s}(g)\overline{\mathcal{D}}(h)}\\
  &\hspace{4cm}+\sqrt{\alpha\,\overline{\mathcal{D}}(g)\mathcal{F}_{0,s}(h)}+\sqrt{\alpha\,\mathcal{D}_{0,s}(g)\overline{\mathcal{F}}(h)}\Big),\displaybreak[0]\\
R_{-\op{(AS+SA)}(-\op{\Delta}_x)^s}(f,g,h)&\lesssim\frac{\sqrt{\nu^{-\theta}}}{\beta}\sqrt{\,\mathcal{D}_{1,s}(f)}\Big(\sqrt{\,\overline{\mathcal{F}}(g)\mathcal{D}_{0,s}(h)}+\sqrt{\,\mathcal{F}_{0,s}(g)\overline{\mathcal{D}}(h)}\\
  &\hspace{4cm}+\sqrt{\alpha\,\overline{\mathcal{D}}(g)\mathcal{F}_{0,s}(h)}+\sqrt{\alpha\,\mathcal{D}_{0,s}(g)\overline{\mathcal{F}}(h)}\Big),\displaybreak[0]\\
R_{-\op{S}^2(-\op{\Delta}_x)^s}(f,g,h)&\lesssim\sqrt{\frac{\nu^{-\theta}}{\gamma\,\beta^{1-\frac{\mu}4}\delta^{\frac{\mu}4}}}\sqrt{\,\mathcal{D}_{1,s}(f)}\Big(\sqrt{\,\overline{\mathcal{D}}(g)\mathcal{F}_{0,s}(h)}+\sqrt{\,\mathcal{D}_{1,s}(g)\overline{\mathcal{F}}(h)}\\
   &\hspace{5cm}+\sqrt{\,\overline{\mathcal{F}}(g)\mathcal{D}_{1,s}(h)}+\sqrt{\,\mathcal{F}_{0,s}(g)\overline{\mathcal{D}}(h)}\Big),\displaybreak[0]\\
R_{-\op{T}^2(-\op{\Delta}_x)^s}(f,g,h)&\lesssim\sqrt{\frac{\nu^{-\theta}}{\delta\sqrt{\beta\delta}}}\sqrt{\,\mathcal{D}_{1,s}(f)}\Big(\sqrt{\,\overline{\mathcal{D}}(g)\mathcal{F}_{0,s}(h)}+\sqrt{\,\mathcal{D}_{1,s}(g)\overline{\mathcal{F}}(h)}\\
   &\hspace{4.2cm}+\sqrt{\,\overline{\mathcal{F}}(g)\mathcal{D}_{1,s}(h)}+\sqrt{\,\mathcal{F}_{0,s}(g)\overline{\mathcal{D}}(h)}\Big).
\end{align*}
The first estimate provides
\begin{equation}\label{estimate-R0-D-1-2}
      \overline{\mathcal{R}}_{0,0}(f,g,h)\lesssim\sqrt{\frac{\nu^{-\theta}}{\alpha}}\sqrt{\,\overline{\mathcal{D}}(f)\overline{\mathcal{F}}(g)\overline{\mathcal{D}}(h)},\quad  \overline{\mathcal{R}}_{0,m-\theta}(f,g,h)\lesssim\sqrt{\frac{\nu^{-\theta}}{\alpha}}\sqrt{\,\overline{\mathcal{D}}(f)\overline{\mathcal{F}}(g)\overline{\mathcal{D}}(h)},
    \end{equation}
and the four next estimates allow to write, since $\gamma=\gamma^{1-\frac{\mu}2}\gamma^{\frac{\mu}2}\leqslant\beta^{1-\frac{\mu}2}\sqrt{\beta\delta}^{\frac{\mu}2}=\beta^{1-\frac{\mu}4}\delta^{\frac{\mu}4}$, and~$\delta\leqslant\beta$ by~\Cref{assumption-coeffs}:
\begin{align*}
  \mathcal{R}_{1,s}(f,g,h)&=R_{\big(-\alpha\,\op{A}^2-\beta\,(\op{AS+SA})-\gamma\,\op{S}^2-\delta\,\op{T}^2\big)(-\op{\Delta}_x)^s}(f,g,h)\\
&\lesssim\sqrt{\nu^{-\theta}}\,\sqrt{\,\mathcal{D}_{1,s}(f)}\Big(\sqrt{\,\overline{\mathcal{F}}(g)\mathcal{D}_{0,s}(h)}+\sqrt{\,\overline{\mathcal{F}}(g)\mathcal{D}_{1,s}(h)\big)}+\sqrt{\,\mathcal{F}_{0,s}(g)\overline{\mathcal{D}}(h)}\\
  &\hspace{4cm}+\sqrt{\,\mathcal{D}_{0,s}(g)\overline{\mathcal{F}}(h)}+\sqrt{\,\mathcal{D}_{1,s}(g)\overline{\mathcal{F}}(h)}+\sqrt{\,\overline{\mathcal{D}}(g)\mathcal{F}_{0,s}(h)}\Big).
\end{align*}
As before, we sum this last estimate for~$s=0$,~$s=m-\theta$, and~$s=m-\theta+1$ (multiplied in this case by~$\nu$, and using~\eqref{eq-nu-sp1} to transform the occurrences of~$\mathcal{F}_{0,m-\theta+1}$ and~$\mathcal{D}_{0,m-\theta+1}$), and add~\eqref{estimate-R0-D-1-2}, using~$\alpha\leqslant1$ to get~\eqref{estimate-bar-R-D2-F} according to~\Cref{def-F-D-B-R-combined}.
\end{proof} 
\subsection{Well-posedness through a fixed point method : proof of~\texorpdfstring{\Cref{thm-well-posedness-Hs0}}{Theorem 1.2}.}
\label{subsec-wellposed-Hs0}
We proceed as in~\Cref{sec-cauchy-l-infini} for our fixed point iterative mapping. Let~$h$ be a smooth function, and let~$f$ be the solution of the linear equation~\eqref{eq-FP-Lin} with~$\mathcal{J}=\mathbb{J}[h]$ and initial condition~$f^0$. This equation can be rewritten under the condensed form
\begin{equation*}
  %\label{eq-VFP-fix-point-condensed}
  \partial_tf+\op{T}f+\frac1d\,\op{A}(f\op{U}h)=\op{A}^2f,
\end{equation*}
and we denote~$\Psi_{f^0}(h)$ such a solution $f$. Our goal is to obtain a fixed point for~$\Psi_{f^0}$, in a space corresponding to our nonlinear functional. Let us take another function~$\widetilde{h}$, and let~$\widetilde{f}=\Psi_{\widetilde{f}^0}(\widetilde{h})$. We denote~$g=f-\widetilde{f}$ and~$u=h-\widetilde{h}$, and we have
\begin{equation*}
  %\label{eq-VFP-diff-condensed}
  \partial_tg+\op{T}g+\frac1d\,\op{A}(g\op{U}h+\widetilde{f}\op{U}u)=\op{A}^2g,
\end{equation*}

Therefore, similarly to~\eqref{ddtQXnonlin}, and using \Cref{def-RX}, we obtain:
\begin{equation*}
  \frac{\d}{\d t}Q_{\op{X}}(g)=Q_{\Phi(\op{X})}(g)+\frac2d\big(R_{\op{X}}(g,g,h)+R_{\op{X}}(g,\widetilde{f},u)\big),
\end{equation*}
and thus, thanks to~\Cref{def-Fs-Ds-Bs-Rs,def-F-D-B-R-combined},
  \begin{equation}
    \label{ddtFf-combined-linear-h}
    \frac{\d}{\d t}\overline{\mathcal{F}}(g)=-2\overline{\mathcal{D}}(g)+\overline{\mathcal{B}}(g)+\partial_\tau\overline{\mathcal{F}}(g)+\frac2d\big(\overline{\mathcal{R}}(g,g,h)+\overline{\mathcal{R}}(g,\widetilde{f},u)\big),
  \end{equation}
 still remembering that when~$g$ depends on time~$t$ and the context is clear, we may still write~$\overline{\mathcal{F}}(g)$ for~$\overline{\mathcal{F}}(t,g(t,\cdot))$ (that is to say with~$\tau=t$), and similarly for the other functionals~$\overline{\mathcal{D}}$,~$\overline{\mathcal{B}}$ and~$\overline{\mathcal{R}}$.

 We now proceed as in~\Cref{subsec-short-time}, by setting~$(\alpha(\tau),\beta(\tau),\gamma(\tau),\delta(\tau),\nu(\tau))=(\alpha_0\tau,\beta_0\tau^2,\gamma_0\tau^3,\delta_0\tau^4,\nu_0\tau^4)$.
  Thanks to~\Cref{prop-bar-B-dtauF}, we can ensure for~$\tau\leqslant1$ that~$\overline{\mathcal{B}}(g)$ and~$\partial_\tau\overline{\mathcal{F}}(g)$ are controlled by~$\varepsilon\overline{\mathcal{D}}(g)$, with~$\varepsilon$ arbitrary small, as soon as we have the same conditions on the coefficients as in~\eqref{conditions-coeffs0}:
  \[ \alpha_0\ll\sqrt{\beta_0},\quad \beta_0\ll1,\quad \gamma_0\ll\sqrt{\beta_0\delta_0}, \quad \delta_0\ll\beta_0. 
  \]
  This can be satisfied by chosing for instance~$(\beta_0,\gamma_0,\delta_0)=(\alpha_0^{\frac32},4\alpha_0^2,4\alpha_0^2)$ with~$\alpha_0$ sufficiently small. Notice that this choice of coefficients also implies that~\Cref{assumption-coeffs} is satisfied, as soon as~$\tau\leqslant1$, if we take~$\nu_0\leqslant\min(1,\frac{\gamma_0}2,\delta_0)=2\alpha_0^2$. We therefore fix such coefficients in order to have, for all~$\tau\leqslant1$ and all smooth function~$g$:
  \begin{equation}\label{estimate-bar-B-dtauF-1-D}
    \overline{\mathcal{B}}(g)+\partial_\tau\overline{\mathcal{F}}(g)\leqslant\overline{\mathcal{D}}(g).
  \end{equation}
  Notice that this choice of coefficients only depends on the dimension~$d$, and could be expressed explicitly by tracking all the numeric constants appearing in~\Cref{prop-bar-B-dtauF}.

  We now use the assumption~$s>\frac{d}2-\frac14$ (for~$d\geqslant3$) or $s>\frac78$ (for~$d=2$). When~$d\geqslant4$, we set~$\mu=0$, when~$d=2$, we set~$\mu=1$, and when~$d=3$ we pick~$\mu\in(0,1]$ sufficiently small, so that in all case we get~$\frac14-\frac{\mu}8>\frac{d}2-s$. We then take $\theta\in(\frac{d}2-s,\frac14-\frac{\mu}8)$ and set~$m=s+\theta$, so we get~$m>\frac{d}2$ and~$4\theta+\frac{\mu}2<1$.
  Using~\Cref{prop-estimates-barR} and Young’s inequality, we have therefore for all~$\tau\leqslant1$:
  \begin{align*} \frac{2}d\big(\overline{\mathcal{R}}(g,g,h)+\overline{\mathcal{R}}(g,\widetilde{f},u)\big)&\lesssim\frac2d\,\frac{\sqrt{\nu_0^{-\theta}}}{\tau^{2\theta+\frac{\mu}4}}\,\Big(\frac{\gamma_0}{\delta_0}\Big)^{\frac{\mu}4}\sqrt{\,\overline{\mathcal{D}}(g)}\Big(\sqrt{\overline{\mathcal{F}}(g)\overline{\mathcal{F}}(h)}+\sqrt{\overline{\mathcal{F}}(\widetilde{f})\overline{\mathcal{F}}(u)}\Big),\\
    &\leqslant\overline{\mathcal{D}}(g)+\frac{C}{\tau^{4\theta+\frac{\mu}2}}\big(\overline{\mathcal{F}}(g)\overline{\mathcal{F}}(h)+\overline{\mathcal{F}}(\widetilde{f})\overline{\mathcal{F}}(u)\big),
  \end{align*}
  where the constant~$C$ only depends on $d$,~$m$, $\theta$ and~$\mu$ (so only on~$d$ and~$s$).

  Let us now write back the dependence on~$\tau$ explicitly, With~\eqref{ddtFf-combined-linear-h}, we therefore obtain that for all~$t\leqslant1$, now writing back explicitly the dependence on~$t$:

  \begin{equation}\label{dt-barF-ghfu}
    \frac{\d}{\d t}\overline{\mathcal{F}}(t,g(t,\cdot))\leqslant\frac{C}{t^{4\theta+\frac{\mu}2}}\big(\overline{\mathcal{F}}(t,g(t,\cdot))\overline{\mathcal{F}}(t,h(t,\cdot))+\overline{\mathcal{F}}(t,\widetilde{f}(t,\cdot))\overline{\mathcal{F}}(t,u(t,\cdot))\big).
  \end{equation}

  We are now ready to define the Banach space in which we will look for our fixed point. We define~$\mathcal{X}_{T}$ as the space of functions~$h$ such that~$\overline{\mathcal{F}}(t,h(t,\cdot))$ is uniformly bounded on~$[0,T]$, endowed with the norm~$\|h\|_{\mathcal{X}_T}^2=\sup_{t\in[0,T]}e^{-Lt}\overline{\mathcal{F}}(t,h(t,\cdot))$, for a constant~$L>0$ that will be chosen later on. Notice that~$\mathcal{X}_{T}\subset L^\infty([0,T],H^{s,0}(\mathbb{R}^d\times\S))$ thanks to~\Cref{def-F-D-B-R-combined} as we have~$s=m-\theta$.
 
  We set~$\kappa=1-4\theta-\frac{\mu}2>0$. We fix~$M>0$, let~$T_M=\big(\frac{\kappa}{CM}\big)^{\frac1{\kappa}}$ and fix~$T<T_M$ (with~$T\leqslant1$). Finally the closed set~$\Omega_M$ is defined by
  \[\Omega_M=\Big\{h\in\mathcal{X}_{T}, \quad\forall t\in[0,T], \qquad \overline{\mathcal{F}}(t,h(t,\cdot))\leqslant\frac{M}{1-(\frac{t}{T_M})^{\kappa}}\Big\},\]

Let us show that~$\Psi_{f^0}$ is a contraction on~$\Omega_M$ when we suppose that~$\|f^0\|_{H^{s,0}}^2=\overline{\mathcal{F}}(0,f^0)\leqslant M$. 
  By density, it is sufficient to prove the stability and contracting properties on smooth compactly supported functions of~$\Omega_M$.
  If~$h\in\Omega_M$, we have from~\eqref{dt-barF-ghfu} and the definition of~$\kappa$ and~$T_M$:
  \begin{equation*}%\label{dt-barF-gfu-reduced}
    \frac{\d}{\d t}\overline{\mathcal{F}}(t,g(t,\cdot))\leqslant\frac{\frac{\kappa}{T_M}(\frac{t}{T_M})^{\kappa-1}}{1-(\frac{t}{T_M})^{\kappa}}\overline{\mathcal{F}}(t,g(t,\cdot))+\tfrac{\kappa}{MT_M}(\tfrac{t}{T_M})^{\kappa-1}\overline{\mathcal{F}}(t,\widetilde{f}(t,\cdot))\overline{\mathcal{F}}(t,u(t,\cdot)),
  \end{equation*}
  and solving this differential inequality we obtain
  \begin{equation}\label{barF-solved} \overline{\mathcal{F}}(t,g(t,\cdot))\leqslant\frac{1}{1-(\frac{t}{T_M})^{\kappa}}\bigg(\overline{\mathcal{F}}(0,g^0)+\int_0^t\tfrac{\kappa}{MT_M}(\tfrac{s}{T_M})^{\kappa-1}\Big(1-(\tfrac{s}{T_M})^{\kappa}\Big)\overline{\mathcal{F}}(s,\widetilde{f}(s,\cdot))\overline{\mathcal{F}}(s,u(s,\cdot))\d s\bigg).
  \end{equation}

  For the case~$\widetilde{f}=0$, we get~$g=f$ and so we have
  \begin{equation}\label{eq-bound-F-local-time}
    \overline{\mathcal{F}}(t,f(t,\cdot))\leqslant\frac{\overline{\mathcal{F}}(0,f^0)}{1-(\frac{t}{T_M})^{\kappa}},
  \end{equation}
  thus proving~$f=\Psi_{f^0}(h)\in\Omega_M$, which gives the stability. If now~$\widetilde{h}\in\Omega_M$, we get~$\widetilde{f}=\Psi_{f^0}(\widetilde{h})\in\Omega_M$, and the estimate~\eqref{barF-solved} becomes
  \begin{align*}
    \overline{\mathcal{F}}(t,g(t,\cdot))&\leqslant\frac{1}{1-(\frac{t}{T_M})^{\kappa}}\bigg(\overline{\mathcal{F}}(0,g^0)+\int_0^t\tfrac{\kappa}{T_M}(\tfrac{s}{T_M})^{\kappa-1}\overline{\mathcal{F}}(s,u(s,\cdot))\d s\bigg)\\
                                    &\leqslant\frac{\overline{\mathcal{F}}(0,g^0)}{1-(\frac{t}{T_M})^{\kappa}}+\frac{\tfrac{\kappa}{T_M}(\tfrac{T}{T_M})^{\kappa-1}}{1-(\frac{T}{T_M})^{\kappa}}\int_0^te^{Ls}\|u\|_{\mathcal{X}_T}^2\d s,
  \end{align*}
  which gives
  \begin{align*} \|\Psi_{f^0}(h)-\Psi_{\widetilde{f}^0}(\widetilde{h})\|_{\mathcal{X}_T}^2=\|g\|_{\mathcal{X}_T}^2&\leqslant\frac{\overline{\mathcal{F}}(0,g^0)}{1-(\frac{T}{T_M})^{\kappa}}+\frac{\tfrac{\kappa}{T_M}(\tfrac{T}{T_M})^{\kappa-1}}{L\Big(1-(\frac{T}{T_M})^{\kappa}\Big)}\|u\|_{\mathcal{X}_T}^2\\
    &\leqslant\frac{\|f^0-\widetilde{f}^0\|_{H^{s,0}}^2}{1-(\frac{T}{T_M})^{\kappa}}+\frac{\tfrac{\kappa}{T_M}(\tfrac{T}{T_M})^{\kappa-1}}{L\Big(1-(\frac{T}{T_M})^{\kappa}\Big)}\|h-\widetilde{h}\|_{\mathcal{X}_T}^2,
  \end{align*}
  thus providing, for~$L$ sufficiently large, both the contraction property (when~$\widetilde{f}^0=f^0$) and the continuity in~$\mathcal{X}_T$ of the unique fixed point with respect to the initial condition. This ends the proof of~\Cref{thm-well-posedness-Hs0}, since the estimates in time come from the bound~\eqref{eq-bound-F-local-time} (remember~$\overline{\mathcal{F}}(0,f^0)=\|f\|_{H^{m-\theta,0}}^2$ and~$s=m-\theta$) and the estimates~\eqref{eq-A-A2-s}--\eqref{eq-T-TA-s}.

\section{Nonlinear stability of isotropic equilibrium up to the phase transition threshold}\label{hypocoercivity-nonlinear}

\subsection{Evolution of the perturbation and refined estimates.} We fix~$0\leqslant\rho<d$ and consider the solution of~\eqref{eq-VFP} with initial condition~$f^0=\rho+g^0$, where~$g^0\in H^{s,0}$. The evolution can be written in condensed form:
\begin{equation}
  \label{eq-VFPg-condensed}
  \partial_tg+\op{T}g+\frac1d\,\op{A}(\rho\op{U}g+g\op{U}g)=\op{A}^2g.
\end{equation}
As was done in \Cref{subsec-wellposed-Hs0}, similarly to~\eqref{ddtQXnonlin}, and using \Cref{def-RX}, we obtain:
\begin{equation}\label{eq-ddtQXg}
  \frac{\d}{\d t}Q_{\op{X}}(g)=Q_{\Phi(\op{X})}(g)+\frac2d\big(\rho\,R_{\op{X}}(g,1,g)+R_{\op{X}}(g,g,g)\big),
\end{equation}
and therefore we obtain, thanks to~\Cref{def-Fs-Ds-Bs-Rs,def-F-D-B-R-combined},
  \begin{equation}
    \label{ddtFg-nonlinear}
    \frac{\d}{\d t}\overline{\mathcal{F}}(g)=-2\overline{\mathcal{D}}(g)+\overline{\mathcal{B}}(g)+\partial_\tau\overline{\mathcal{F}}(g)+\frac2d\big(\rho\,\overline{\mathcal{R}}(g,1,g)+\overline{\mathcal{R}}(g,g,g)\big).
  \end{equation}
  We therefore have new quadratic terms~$\overline{\mathcal{R}}(g,1,g)$ that have to be estimated differently compared to~\Cref{prop-estimates-barR}, since~$\overline{\mathcal{F}}(1)$ is not finite, but simplifications will come from simpler integration by parts and the nice properties of the operator~$\op{U}$ given in~\Cref{lemmeAU}. We start by giving the counterpart of~\Cref{prop-estimates-RX}, which now do not require the use of~$H^m$ norms.
  \begin{lemma}\label{lemma-RXg1g}
Let $s\geqslant0$ (not necessarily integers). For any smooth and compactly supported function~$g$, we have the following estimates:
  \begin{align}
    R_{(-\op{\Delta}_x)^s}(g,1,g) &\leqslant \|\op Ag\|_{\dot{H}^{s,0}}^2, \label{eq-Rg1g-Deltas}\\
    R_{-\op A^2(-\op{\Delta}_x)^s}(g,1,g) &\leqslant(d-1)\|\op Ag\|_{\dot{H}^{s,0}}^2, \label{eq-Rg1g-A2Deltas}\\
    R_{-(\op{AS+SA})(-\op{\Delta}_x)^s}(g,1,g) &\leqslant(d-1)\big(\|\op Sg\|_{\dot{H}^{s,0}}+\|\nabla_x\op{\Pi}_1g\|_{\dot{H}^{s,0}}\big)\|\op Ag\|_{\dot{H}^{s,0}},\label{eq-Rg1g-ASpSADeltas}\\
    R_{-\op{S}^2(-\op{\Delta}_x)^s}(g,1,g) &\leqslant (d-1) \|\op Sg\|_{\dot{H}^{s,0}}\|\nabla_x\op{\Pi}_1g\|_{\dot{H}^{s,0}},\label{eq-Rg1g-S2Deltas} \\
    R_{-\op{T}^2(-\op{\Delta}_x)^s}(g,1,g) &\leqslant (d-1) \|\op Tg\|_{\dot{H}^{s,0}}\|\nabla_x\op{\Pi}_1g\|_{\dot{H}^{s,0}}.\label{eq-Rg1g-T2Deltas}.
  \end{align}  \end{lemma}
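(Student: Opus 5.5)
Since $(-\op{\Delta}_x)^{s}$ commutes with $\op{A}$, $\op{U}$, $\op{S}$, $\op{T}$, $\op{\Pi}_1$ and $\mathbb{J}$, I will prove each estimate for $s=0$ and then apply it to $(-\op{\Delta}_x)^{\frac s2}g$. With $g$ in the second slot replaced by $1$, the cubic term becomes $R_{\op X}(g,1,g)=\int(\op{A}\op{X}g)\cdot\op{U}g$. All the operators involved are antiselfadjoint or selfadjoint, so I will move $\op{A}$ onto $\op{U}g$ and use \Cref{lemmeAU}--\ref{U2}, namely $\op{A}\op{U}=\op{U}^2=-(d-1)\op{\Pi}_1$ (recall $\op{\Pi}_1 g=d\,v\cdot\mathbb{J}[g]$) and $\op{A}\op{U}^2=-(d-1)\op{U}$. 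The point of the lemma is that these identities remove the need for the $H^m$ norms that appear in \Cref{prop-estimates-RX}.

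For \eqref{eq-Rg1g-Deltas}, write $R_{\op{Id}}(g,1,g)=\langle\op{A}g,\op{U}g\rangle=\|\op{U}g\|_2^2\le\|\op{A}g\|_2^2$, using \Cref{lemmeAU}--\ref{U2} and the identity $\langle\op{A}f,\op{U}f\rangle=\|\op{U}f\|^2$ from its proof. For \eqref{eq-Rg1g-A2Deltas}, compute
\[R_{-\op{A}^2}(g,1,g)=-\langle\op{A}^3g,\op{U}g\rangle=\langle\op{A}^2g,\op{A}\op{U}g\rangle=\langle g,\op{A}^2\op{U}^2g\rangle.\]
By \Cref{lemmeAU}--\ref{AU2} we have $\op{A}^2\op{U}^2=\op{U}\op{A}^2\op{U}=-(d-1)\op{U}^2$, hence the expression equals $(d-1)\|\op{U}g\|^2\le(d-1)\|\op{A}g\|^2$.

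For the three transport-type terms, I will commute $\op{A}$ past $\op{S}$ and $\op{T}$. The needed relations are $[\op{A},\op{S}]=-(d-1)\op{T}$, \eqref{comAT2} (that is $[\op{A},\op{T}^2]=2\op{TS}$), and $[\op{A},\op{S}^2]=-2\op{TS}$.

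For \eqref{eq-Rg1g-S2Deltas}, write $\op{A}\op{S}^2=\op{S}^2\op{A}-2\op{TS}$. Then
\[R_{-\op{S}^2}(g,1,g)=-\langle\op{S}^2\op{A}g,\op{U}g\rangle+2\langle\op{TS}g,\op{U}g\rangle.\]
In the first term I move one $\op{S}$ and $\op{A}$ onto $\op{U}g$, so it involves $\op{S}\op{A}\op{U}g=-(d-1)\op{S}\op{\Pi}_1g$ up to contraction ordering. I expect both pieces to combine into $(d-1)\langle \op{S}g,\op{S}\op{\Pi}_1 g\rangle$-type terms, which I then bound by Cauchy--Schwarz, using $\|\op{S}\op{\Pi}_1g\|\le\|\nabla_x\op{\Pi}_1g\|$ since $|v\wedge w|\le|w|$.

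The argument for \eqref{eq-Rg1g-T2Deltas} is symmetric, starting from $\op{A}\op{T}^2=\op{T}^2\op{A}+2\op{TS}$. It should yield $\langle\op{T}g,\op{T}\op{\Pi}_1g\rangle$-type terms, bounded using $\|\op{T}\op{\Pi}_1 g\|\le\|\nabla_x\op{\Pi}_1g\|$. For \eqref{eq-Rg1g-ASpSADeltas}, I use $\op{AS}+\op{SA}=2\op{SA}-(d-1)\op{T}$ and then move derivatives as above. This produces $\langle\op{S}g,\cdot\rangle$ and $\langle\op{A}g,\op{T}\op{\Pi}_1g\rangle$ terms, which again are controlled by $\|\op{A}g\|(\|\op{S}g\|+\|\nabla_x\op{\Pi}_1 g\|)$.

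The main obstacle is the bookkeeping in the $\op{S}^2$ and $\op{T}^2$ cases. There one must track the $\bigwedge^2$ contractions, since $\op{S}\op{A}\op{U}$ is not simply $\op{S}(\op{AU})$. One must also check that the cross terms $\langle\op{TS}g,\op{U}g\rangle$ either cancel or reduce to expressions involving only $\op{\Pi}_1g$. For this step I would first verify the identities componentwise on the degree-one harmonics $v_k$, using \Cref{lemmeAU}--\ref{Uijvk} and $\op{U}_{i,j}v_k=\op{A}_{i,j}v_k$; this is where the exact constant $(d-1)$ should emerge.
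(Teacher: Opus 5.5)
Your proofs of \eqref{eq-Rg1g-Deltas} and \eqref{eq-Rg1g-A2Deltas} are correct and coincide with the paper's: move $\op{A}$ onto $\op{U}g$, then use \Cref{lemmeAU}.

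\textbf{The gap.} For the three transport-type estimates the proposal does not contain a proof. Instead of repeating that move, you commute $\op{A}$ past $\op{S}$ and $\op{T}$. This creates cross terms such as $2\langle\op{TS}g,\op{U}g\rangle$, which you only \emph{expect} to recombine or cancel, and you leave this as ``the main obstacle''. That cancellation is exactly what produces the constant $(d-1)$.
\begin{itemize}
\item Estimated separately, the pieces lose the constant. For instance $2\langle\op{TS}g,\op{U}g\rangle=-2\langle\op{S}g,\op{T}\op{U}g\rangle$ has natural bound $2\sqrt{d-1}\,\|\op{S}g\|_2\|\nabla_x\op{\Pi}_1g\|_2$. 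For $d\leqslant4$ this already exceeds the whole right-hand side of \eqref{eq-Rg1g-S2Deltas}.
\item For \eqref{eq-Rg1g-ASpSADeltas}, the splitting $\op{AS}+\op{SA}=2\op{SA}-(d-1)\op{T}$ works against you. If you then move $\op{A}$ onto $\op{U}g$, the $\op{T}$-piece becomes $(d-1)^2\langle\op{T}g,\op{\Pi}_1g\rangle$. This is not of the form of the right-hand side: a direct Cauchy--Schwarz brings in $\|\op{T}g\|_2$ or $\|g\|_2$, so you would have to undo the splitting.
\item The term $\langle\op{A}g,\op{T}\op{\Pi}_1g\rangle$ you announce is not well defined, since it pairs a $2$-vector with a scalar. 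If you meant $\langle\op{A}g,\op{T}\op{U}g\rangle$, bounding it separately again loses the constant.
\end{itemize}

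\textbf{The missing observation.} No commutator is needed. The move you used for the first two estimates works for every scalar operator $\op{X}$:
\[
R_{\op X}(g,1,g)=\langle\op{A}\op{X}g,\op{U}g\rangle=-\langle\op{X}g,\op{A}\op{U}g\rangle=(d-1)\langle\op{X}g,\op{\Pi}_1g\rangle .
\]
Antiselfadjointness of $\op{S}_{i,j}$ and $\op{T}$ then gives
\begin{itemize}
\item $R_{-\op{S}^2}(g,1,g)=(d-1)\langle\op{S}g,\op{S}\op{\Pi}_1g\rangle$;
\item $R_{-\op{T}^2}(g,1,g)=(d-1)\langle\op{T}g,\op{T}\op{\Pi}_1g\rangle$;
\item $R_{-(\op{AS}+\op{SA})}(g,1,g)=(d-1)\big(\langle\op{S}g,\op{A}\op{\Pi}_1g\rangle+\langle\op{A}g,\op{S}\op{\Pi}_1g\rangle\big)$, where $\op{A}\op{\Pi}_1g=d\,v\wedge\mathbb{J}[g]=\op{U}g$.
\end{itemize}
Now apply Cauchy--Schwarz together with $\|\op{S}h\|_2,\|\op{T}h\|_2\leqslant\|\nabla_xh\|_2$ (from $\op{S}^2+\op{T}^2=\op{\Delta}_x$) and $\|\op{U}g\|_2\leqslant\|\op{A}g\|_2$. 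This yields the three estimates with exactly the constant $(d-1)$, and is essentially the paper's argument.

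Your commutator route is not wrong as algebra: carried out exactly, the cross terms do cancel. But it only reproduces this identity the long way round, and the proposal stops before showing the cancellation.
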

\begin{proof}[\textbf{Proof of~\Cref{lemma-RXg1g}}] Since~$(-\op{\Delta}_x)^{\frac{s}2}$ commutes with~$\op{A}$,~$\op{T}$,~$\op{S}$ and~$\op{U}$, it is sufficient to prove the estimates for~$s=0$ and apply them to~$(-\op{\Delta}_x)^{\frac{s}2}g$ to get the general case.

  We have~$R_{\op{Id}}(g,1,g)=\langle\op{A}g,\op{U}g\rangle_{L^2}=\|\op{U}g\|^2_2\leqslant\|\op{A}g\|^2_2$ thanks to~\Cref{lemmeAU}--\ref{U2}, and this gives~\eqref{eq-Rg1g-Deltas}. Then similarly~$R_{-\op{A^2}}(g,1,g)=\langle-\op{A}^3g,\op{U}g\rangle_{L^2}=(d-1)\|\op{U}g\|^2_2\leqslant(d-1)\|\op{A}g\|^2_2$ thanks to~\Cref{lemmeAU}--\ref{AU2}, giving~\eqref{eq-Rg1g-A2Deltas}.
We then have
\begin{align*}
  R_{-(\op{AS+SA})}(g,1,g)&=-\langle\op{S}g,\op{A}^2\op{U}g\rangle_{L^2}-\langle\op{A}g,\op{S}\op{AU}g\rangle_{L^2},\\
  R_{-\op{S}^2}(g,1,g)&=-\langle\op{A}g,\op{S}\op{AU}g\rangle_{L^2} ,\\
  R_{-\op{T}^2}(g,1,g)&= -\langle\op{A}g,\op{T}\op{AU}g\rangle_{L^2}.
\end{align*}
Let us recall that~$\op{\Pi}_1g=d\,v\cdot\mathbb{J}[g]$, thus thanks to~\Cref{lemmeAU}--\ref{U2}, we have~$\op{AU}g=-(d-1)\op{\Pi}_1g$. Using the fact that~$\op{\Delta}_x=\op{T}^2+\op{S}^2$ to bound both~$\|\op{S}\op{AU}g\|_2^2$ and~$\|\op{S}\op{AU}g\|_2^2$ by~$(d-1)^2\|\nabla_x\op{\Pi}_1g\|^2$, we use Cauchy--Schwarz inequality and obtain~\eqref{eq-Rg1g-ASpSADeltas}--\eqref{eq-Rg1g-T2Deltas}.
\end{proof}

We are now ready, thanks to~\Cref{lemma-Sg2}, to give conditions on the coefficients in order to ensure that~$\overline{\mathcal{R}}(g,1,g)$ is sufficiently small.
\begin{proposition}\label{prop-control-Rg1g}
  We let~$\mu=1$ for~$d=2$,~$\mu\in(0,1]$ for~$d=3$, or~$\mu=0$ for~$d=4$.

For all~$s\geqslant0$, we have~$\mathcal{R}_{0,s}(g,1,g)\leqslant\mathcal{D}_{0,s}(g,1,g)$ and $\mathcal{R}_{1,s}(g,1,g)\leqslant\varepsilon(\mathcal{D}_{0,s}(g,1,g)+\mathcal{D}_{1,s}(g,1,g))$ for $\varepsilon$ as small as we want, as soon as we have~\Cref{assumption-coeffs} and the following conditions on the coefficients:
  \begin{equation}\label{conditions-coeffs-Rg1g}
    \alpha\ll1,\quad \beta\ll\delta^{\frac{\mu}{4+\mu}},\quad \gamma\ll\beta^{1-\frac{\mu}8}\delta^{\frac{\mu}8},\quad \delta\ll\beta,
  \end{equation}
  where we use the symbol~$\ll$ to denote inequality up to a small positive constant~$C_\varepsilon$ that may depend only on~$\varepsilon$,~$d$ and~$\mu$. 
  Consequently, under these conditions, we have~$\overline{\mathcal{R}}(g,1,g)\leqslant(1+\varepsilon)\overline{\mathcal{D}}(g)$.
\end{proposition}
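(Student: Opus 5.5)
The plan is to work at $s=0$ and then apply everything to $(-\op{\Delta}_x)^{\frac s2}g$, since all operators commute with fractional Laplacians in $x$. This is exactly how \Cref{lemma-RXg1g} and \Cref{lemma-inequalities-s} were obtained. The first claim is immediate. By \eqref{eq-Rg1g-Deltas}, $\mathcal{R}_{0,s}(g,1,g)\leqslant\|\op{A}g\|_{\dot H^{s,0}}^2=\mathcal{D}_{0,s}(g)$.

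For $\mathcal{R}_{1,s}$, I will expand $\op{X}_1$ and bound each of the four contributions using \Cref{lemma-RXg1g}, with attention to the sign: $R$ is linear in $\op X$ and $\op X_1=\alpha(-\op A^2)+\beta(-(\op{AS+SA}))+\gamma(-\op S^2)+\delta(-\op T^2)$.
\begin{itemize}
\item The $\alpha$ term is at most $\alpha(d-1)\mathcal{D}_{0,s}$, which is $\leqslant\varepsilon\mathcal{D}_{0,s}$ once $\alpha\ll1$.
\item The remaining terms need a control of $\|\nabla_x\op{\Pi}_1g\|_{\dot H^{s,0}}$. Since $\|\nabla_x\op{\Pi}_1 g\|_2^2=d\|\nabla_x\mathbb{J}[g]\|_2^2$, \Cref{lemma-Sg2} (as already used in \eqref{eq-gradJf-s}) gives
\[\|\nabla_x\op{\Pi}_1g\|_{\dot H^{s,0}}\lesssim\|\op Sg\|_{\dot H^{s,0}}+\|\op Sg\|^{1-\frac\mu2}_{\dot H^{s,0}}\|\op Tg\|^{\frac\mu2}_{\dot H^{s,0}}.\]
Combined with $\|\op Sg\|^2\leqslant\beta^{-1}\mathcal{D}_{1,s}$ and $\|\op Tg\|^2\lesssim(\beta\delta)^{-1/2}\mathcal{D}_{1,s}$ from \eqref{eq-S-SA-s}--\eqref{eq-T-TA-s}, this yields $\|\nabla_x\op{\Pi}_1g\|^2_{\dot H^{s,0}}\lesssim\beta^{-1+\frac\mu4}(\beta\delta)^{-\frac\mu4}\cdots$. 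More precisely, it is $\lesssim\beta^{-(1-\frac\mu4)}\delta^{-\frac\mu4}\mathcal{D}_{1,s}$, up to the harmless first term, since $\delta\leqslant\beta$.
\end{itemize}

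The three remaining contributions are then estimated as follows.
\begin{itemize}
\item \textbf{The $\beta$ term.} Writing $P=\|\nabla_x\op\Pi_1 g\|_{\dot H^{s,0}}$, it is $\leqslant\beta(d-1)(\|\op Sg\|+P)\|\op Ag\|$. Using Young, this is $\lesssim\beta\sqrt{\beta^{-1}\cdots}$, and more precisely $\lesssim \beta\,\beta^{-\frac12+\frac\mu8}\delta^{-\frac\mu8}\sqrt{\mathcal{D}_{1,s}\mathcal{D}_{0,s}}$. This is $\leqslant\varepsilon(\mathcal{D}_{0,s}+\mathcal{D}_{1,s})$ as soon as $\beta^{\frac12+\frac\mu8}\delta^{-\frac\mu8}\ll1$, i.e. $\beta^{4+\mu}\ll\delta^{\mu}$, which is the condition $\beta\ll\delta^{\frac{\mu}{4+\mu}}$.
\item \textbf{The $\gamma$ term.} It is $\lesssim\gamma\,\beta^{-\frac12}\beta^{-\frac12+\frac\mu8}\delta^{-\frac\mu8}\mathcal{D}_{1,s}$, which is small iff $\gamma\ll\beta^{1-\frac\mu8}\delta^{\frac\mu8}$.
\item \textbf{The $\delta$ term.} It is $\lesssim\delta(\beta\delta)^{-\frac14}\beta^{-\frac12+\frac\mu8}\delta^{-\frac\mu8}\mathcal{D}_{1,s}$. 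Since $\delta\leqslant\beta$, this is small as soon as $\delta\ll\beta$; a quick exponent check gives the power $(\delta/\beta)^{\frac34-\frac\mu8}$.
\end{itemize}
These four estimates explain exactly the conditions \eqref{conditions-coeffs-Rg1g}. The $\|\op Sg\|$ part of the bound on $P$ is dominated by the same estimates because $\delta\leqslant\beta$.

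Finally, I sum according to \Cref{def-F-D-B-R-combined}. The terms $\mathcal{R}_{0,0}+\mathcal{R}_{0,m-\theta}$ give at most $\mathcal{D}_{0,0}+\mathcal{D}_{0,m-\theta}$. The terms $\mathcal{R}_{1,0}+\mathcal{R}_{1,m-\theta}+\nu\mathcal{R}_{1,m-\theta+1}$ give $\varepsilon$ times the corresponding dissipations. In the last of these, $\nu\mathcal{D}_{0,m-\theta+1}\leqslant\mathcal{D}_{1,m-\theta}$ by \eqref{eq-nu-sp1}. The total is therefore $\leqslant(1+2\varepsilon)\overline{\mathcal{D}}(g)$, and renaming $\varepsilon$ gives the conclusion.

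The main obstacle is the exponent bookkeeping in the interpolation via \Cref{lemma-Sg2}. One has to use the dissipation bound for $\op T$, not the energy bound, and make sure the resulting powers match \eqref{conditions-coeffs-Rg1g}. I would first verify the $\beta$ term carefully, since it produces the least obvious condition.
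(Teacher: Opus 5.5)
Your proposal is correct and follows the paper's argument: the paper's proof is a one-line appeal to \Cref{lemma-RXg1g}, \Cref{lemma-inequalities-s} (i.e.\ the bound \eqref{eq-gradJf-s} from \Cref{lemma-Sg2}), \eqref{eq-def-Pi1} and \Cref{assumption-coeffs}, and your term-by-term bookkeeping (giving the factors $\beta^{\frac12+\frac\mu8}\delta^{-\frac\mu8}$, $\gamma\beta^{-1+\frac\mu8}\delta^{-\frac\mu8}$ and $(\delta/\beta)^{\frac34-\frac\mu8}$) correctly reproduces \eqref{conditions-coeffs-Rg1g}, using $\delta\leqslant\beta$, which is what the assumption actually gives (the paper's ``$\beta\leqslant\delta$'' is a typo). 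The only blemish is a slip in an intermediate expression: $\beta^{-1+\frac\mu4}(\beta\delta)^{-\frac\mu4}$ should read $\beta^{-1+\frac\mu2}(\beta\delta)^{-\frac\mu4}$; the bound $\beta^{-(1-\frac\mu4)}\delta^{-\frac\mu4}\mathcal{D}_{1,s}$ that you actually use is right.
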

\begin{proof}[\textbf{Proof of~\Cref{prop-control-Rg1g}}]
Starting from~\Cref{def-Fs-Ds-Bs-Rs}, the conditions obtained here are simple consequences of~\Cref{lemma-RXg1g,lemma-inequalities-s}, using~\eqref{eq-def-Pi1}, and~\Cref{assumption-coeffs} (ensuring that~$\beta\leqslant\delta$).
\end{proof}

\subsection{Control of the solution in short time}
Notice that the choice~$(\alpha,\beta,\gamma,\delta)=(\alpha_0\tau,\beta_0\tau^2,\gamma_0\tau^3,\delta_0\tau^4)$, as was taken in~\Cref{subsec-wellposed-Hs0}, with~$\alpha_0$ sufficiently small and~$(\beta_0,\delta_0,\gamma_0)=(\alpha_0^{\frac32},4\alpha_0^2,4\alpha_0^2)$ allows to satisfy the conditions~\eqref{conditions-coeffs-Rg1g} for all~$\mu\in[0,1]$ and~$\tau\leqslant1$.

Therefore we may proceed similarly as in~\Cref{subsec-wellposed-Hs0}. If~$s>\frac{d}2-\frac14$ (or~$s>\frac78$ for~$d=2$) we pick~$m>\frac{d}2$,~$\mu$ and~$\theta$ such that~$s=m-\theta$ and~$\kappa=1-4\theta-\frac{\mu}2>0$. 
Using~\Cref{prop-control-Rg1g} and~\Cref{prop-bar-B-dtauF}, we proceed as in~\eqref{estimate-bar-B-dtauF-1-D} to find appropriate coefficients such that for all~$\tau\leqslant1$, we have
\[\overline{\mathcal{B}}(g)+\partial_\tau\overline{\mathcal{F}}(g)+\frac{2\rho}d\,\overline{\mathcal{R}}(g,1,g)\leqslant(1-\frac{\rho}d)\overline{\mathcal{D}}(g).\]
Using~\Cref{prop-estimates-barR}, with Young’s inequality, we find then an explicit constant~$C$ depending only on~$d$,~$\rho$ and~$s$ such that
\[\frac{2}{d}\,\overline{\mathcal{R}}(g,g,g)\leqslant(1-\frac{\rho}d)\overline{\mathcal{D}}(g)+\frac{C\overline{\mathcal{F}}(g)^2}{\tau^{1-\kappa}}.\]
Inserting this estimates into~\eqref{ddtFg-nonlinear}, we obtain for all~$t\leqslant1$:

\begin{equation*}%\label{dt-barFg-barFg2}
    \frac{\d}{\d t}\overline{\mathcal{F}}(t,g(t,\cdot))\leqslant\frac{C\overline{\mathcal{F}}(t,g(t,\cdot))^2}{t^{1-\kappa}}.
\end{equation*}
By letting~$T_{g^0}=\big(\frac{\kappa}{C\overline{\mathcal{F}}(0,g^0)}\big)^\kappa$ this inequality may be solved on~$[0,T_{g^0})$, giving
\[\overline{\mathcal{F}}(t,g(t,\cdot))\leqslant\frac{\overline{\mathcal{F}}(0,g^0)}{1-(\frac{t}{T_{g^0}})^\kappa}.\]
By taking~$\eta$ sufficiently small, we may for instance ensure~$T_{g^0}\geqslant2$ as soon as~$\overline{\mathcal{F}}(0,g^0)\leqslant\eta^2$, and therefore, thanks to the equivalence of~$\overline{\mathcal{F}}$ with Sobolev norms, we have proven the following statement.

\begin{proposition}\label{prop-short-time-nonlinear}
  Let~$s>\frac{d}2-\frac14$ (or~$s>\frac78$ when~$d=2$). There exist explicit positive constants~$\eta$ and~$C$, depending only on~$\rho$,~$d$ and~$s$, such that any solution~$g$ of~\eqref{eq-VFPg-condensed} with initial condition~$g^0$ such that~$\|g^0\|_{H^{s,0}}\leqslant\eta$ is well-defined up to time~$t=1$, and we have
  \[\forall t\in[0,1],\|g(t,\cdot)\|_{H^{s,0}}\leqslant C\|g^0\|_{H^{s,0}},\quad \|g(1,\cdot)\|_{H^{s+2,1}}\leqslant C\|g^0\|_{H^{s,0}}.\]
\end{proposition}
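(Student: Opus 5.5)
The argument preceding the statement already carries most of the proof. What remains is to assemble it and read off the two claimed bounds from the time-weighted functional $\overline{\mathcal{F}}(t,g(t,\cdot))$.

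First, fix $\mu$, $\theta$ and $m=s+\theta$ as in \Cref{subsec-wellposed-Hs0}, so that $\kappa=1-4\theta-\frac{\mu}2>0$. Take the coefficients $(\alpha,\beta,\gamma,\delta,\nu)=(\alpha_0\tau,\beta_0\tau^2,\gamma_0\tau^3,\delta_0\tau^4,\nu_0\tau^4)$, with $(\beta_0,\gamma_0,\delta_0)=(\alpha_0^{3/2},4\alpha_0^2,4\alpha_0^2)$ and $\alpha_0$ small depending on $\rho$ and $d$. This choice satisfies \Cref{assumption-coeffs}, the smallness conditions of \Cref{prop-bar-B-dtauF}, and \eqref{conditions-coeffs-Rg1g} for $\tau\leqslant1$. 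Then apply \Cref{prop-control-Rg1g} with $\varepsilon$ chosen so that $\frac{2\rho}{d}(1+\varepsilon)$ together with the $\overline{\mathcal{B}}$ and $\partial_\tau\overline{\mathcal{F}}$ contributions stays below $1-\frac{\rho}{d}$ of the $2\overline{\mathcal{D}}$ budget. The condition $\rho<d$ is used exactly here. The cubic term is handled by \eqref{estimate-bar-R-D-F2} and Young's inequality, which produce the factor $\tau^{-(1-\kappa)}$ from $\nu^{-\theta/2}(\gamma/\delta)^{\mu/4}$. The result is $\frac{\d}{\d t}\overline{\mathcal{F}}\leqslant -c\,\overline{\mathcal{D}}+C\overline{\mathcal{F}}^2t^{\kappa-1}$. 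Dropping the dissipation and integrating the Riccati-type inequality gives $\overline{\mathcal{F}}(t)\leqslant\overline{\mathcal{F}}(0)/(1-(t/T_{g^0})^\kappa)$.

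Second, justify the existence of $g$ up to time $1$. The local solution of \Cref{thm-well-posedness-Hs0} (with $f=\rho+g$, which I handle through the local version thanks to finite speed of propagation, or directly with the same fixed-point argument adapted to \eqref{eq-VFPg-condensed}) can be continued as long as $\|g\|_{H^{s,0}}$ stays bounded. For this step I would rather rerun the contraction argument of \Cref{subsec-wellposed-Hs0} on the perturbation equation. The linear term $\rho\op{A}\op{U}g$ only contributes $\rho\overline{\mathcal{R}}(g,1,g)$, which is already controlled, so the argument goes through unchanged. Choosing $\eta$ small so that $T_{g^0}\geqslant 2$ then yields $\overline{\mathcal{F}}(t)\leqslant C\overline{\mathcal{F}}(0)$ on $[0,1]$, with $\overline{\mathcal{F}}(0,g^0)=\|g^0\|^2_{H^{s,0}}$ since all weighted terms vanish at $\tau=0$. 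This gives the first bound.

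Third, at $t=1$ all the weights are constants, so $\overline{\mathcal{F}}(1,g(1))$ controls $\|g\|^2_{H^{s+1,0}}$ (through $\nu\mathcal{F}_{0,s+1}\leqslant\mathcal{F}_{1,s}$ and the $\nu\mathcal{F}_{1,s+1}$ term), as well as $\|\op{A}g\|^2_{H^{s,0}}$ and, via $\nu\mathcal{F}_{1,m-\theta+1}$, $\|g\|^2_{\dot{H}^{s+2,0}}$ and the mixed terms $\|\op{A}g\|_{\dot{H}^{s+1,0}}$, by \eqref{eq-nu-sp1}–\eqref{eq-T-TA-s} applied at $s+1$. Collecting these bounds gives $\|g(1)\|_{H^{s+2,1}}\lesssim\sqrt{\overline{\mathcal{F}}(1)}$.

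The main obstacle is making sure this indexing really covers the full $H^{s+2,1}$ norm. The functional contains $\mathcal{F}_{1,s+1}$, which controls $\op{T}$ and $\op{S}$ derivatives at level $s+1$, hence spatial derivatives at level $s+2$, and the $\op{A}$ derivative at level $s+1$. I would check that $\|g\|_{H^{s+2,1}}^2\lesssim\|g\|^2_2+\|g\|^2_{\dot{H}^{s+2,0}}+\|\op{A}g\|^2_{\dot{H}^{s+1,0}}+\|\op{A}g\|_2^2$ is the intended definition. If instead $\|\op{A}g\|_{\dot{H}^{s+2,0}}$ is needed, I would run a second short-time step on $[1,2]$, which is available since $T_{g^0}\geqslant2$.
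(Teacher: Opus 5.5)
Your proposal is correct and follows the paper's own argument. The paper uses the time-weighted coefficients $(\alpha_0\tau,\beta_0\tau^2,\gamma_0\tau^3,\delta_0\tau^4,\nu_0\tau^4)$, absorbs $\frac{2\rho}{d}\overline{\mathcal{R}}(g,1,g)$ via \Cref{prop-control-Rg1g} (this is where $\rho<d$ enters), and treats the cubic term with \eqref{estimate-bar-R-D-F2} and Young to get $\frac{\d}{\d t}\overline{\mathcal{F}}\leqslant C\overline{\mathcal{F}}^2t^{\kappa-1}$; it then takes $\eta$ small so that $T_{g^0}\geqslant2$ and reads off the norms at $t=1$, with your continuation argument only making explicit what the paper leaves implicit. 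Your reading of $H^{s+2,1}$ as $\|g\|_{H^{s+2,0}}$ together with $\|\nabla_vg\|_{H^{s+1,0}}$ is the intended one: it is exactly how the paper uses $H^{m+1,1}$ with $m=s+1$ in the proof of \Cref{thm-stabilite-nonlineaire-Hs1}, so your fallback step on $[1,2]$ is not needed.
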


In view of the forthcoming study of the large time behaviour in the case of the whole space, we also need to be able to control~$\mathcal{M}(g)=\int_{\mathbb{R}^d\times\S}|x|^2g^2\,\dd x \dd v$ in this short time interval we can directly apply~\eqref{eq-ddtQXg}, seeing that~$\mathcal{M}(t)=Q_{|x|^2\op{Id}}$. We immediately compute~$\Phi(|x|^2\op{Id})=2|x|^2\op{A}^2+2v\cdot x\,\op{Id}$ thanks to its definition~\eqref{def-Phi}. We then have, thanks to~\Cref{lemmeAU}--\ref{U2}:
\[  R_{|x|^2\op{Id}}(g,1,g)=\langle|x|^2\op{A}g,\op{U}g\rangle\leqslant\int_{\mathbb{R}^d\times\S}|x|^2|\op{A}g|^2\,\dd x \dd v,\]
We therefore get, similarly as~\eqref{estimate-ddtM-linear}:
\begin{align}
  \frac{\d}{\d t}\mathcal{M}(g)&\leqslant-2\int_{\mathbb{R}^d\times\S}\Big((1-\tfrac{\rho}d)|x|^2|\op{A}g|^2-v\cdot x\,g^2\Big)\,\dd x \dd v + R_{|x|^2\op{Id}}(g,g,g)\nonumber\\
  &\leqslant-\big(1-\tfrac{\rho}d\big)\int_{\mathbb{R}^d\times\S}\Big(2|x|^2|\nabla_vg|^2-\tfrac{4d}{(d-1)(d-\rho)}g\,x\cdot\nabla_vg\Big)\,\dd x \dd v + R_{|x|^2\op{Id}}(g,g,g)\nonumber\\
  &\leqslant-\big(1-\tfrac{\rho}d\big)\int_{\mathbb{R}^d\times\S}|x|^2|\nabla_vg|^2\,\dd x \dd v +\frac{4\|g\|_2^2}{(d-1)^2(1-\frac{\rho}d)}+R_{|x|^2\op{Id}}(g,g,g).\label{estimate-ddtM-nonlinear}
\end{align}

We have, using~\Cref{def-A-U} and Sobolev embedding:
\begin{align*}
  R_{|x|^2\op{Id}}(g,g,g)=\langle|x|^2\op{A}g,g\op{U}g\rangle&\leqslant d\int_{\mathbb{R}^d\times\S}|x|^2|\nabla_vg||g||\mathbb{J}[g]|\,dv\,\d x\nonumber\\
  &\lesssim\sqrt{\mathcal{M}(g)}\sqrt{\int_{\mathbb{R}^d\times\S}|x|^2|\nabla_vg|^2\,\dd x \dd v}\,\|\mathbb{J}[g]\|_{H^{m}(\mathbb{R}^d)},
\end{align*}
that we can insert into~\eqref{estimate-ddtM-nonlinear}, using Young’s inequality to get
\begin{equation}\label{estimate-ddtM-F-MJ2}
    \frac{\d}{\d t}\mathcal{M}(g)\leqslant C\big(\|g\|_2^2+\mathcal{M}(g)\|\mathbb{J}[g]\|_{H^{m}(\mathbb{R}^d)}^2\big),
\end{equation}
where the constant~$C$ only depends on~$m$,~$d$ and~$\rho$. This estimation will also be used in the next subsection for the behaviour in large time.

In short times, we have the following result.
\begin{proposition}\label{prop-M-short-time}
  Under the assumptions of~\Cref{prop-short-time-nonlinear}, there exists a constant~$C$ depending only on~$\rho$,~$d$ and~$s$ such that if furthermore~$\mathcal{M}(g^0)$ is finite, then $\mathcal{M}(g)$ is bounded for all~$t\leqslant1$:
  \begin{equation*}
    \mathcal{M}(g(t,\cdot))\leqslant C\big(\mathcal{M}(g^0)+\|g^0\|_{H^{s,0}}\big).
  \end{equation*}
\end{proposition}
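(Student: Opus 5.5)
Starting from the differential inequality~\eqref{estimate-ddtM-F-MJ2}, the plan is to treat it as a linear Grönwall inequality for~$\mathcal{M}(g)$, with a time-dependent coefficient and a source term:
\[\frac{\d}{\d t}\mathcal{M}(g)\leqslant C\|g\|_2^2+C\|\mathbb{J}[g]\|_{H^m(\mathbb{R}^d)}^2\,\mathcal{M}(g).\]
This requires two bounds on~$[0,1]$. The source~$\|g\|_2^2$ is bounded uniformly by~$C\|g^0\|_{H^{s,0}}^2$, thanks to~\Cref{prop-short-time-nonlinear}. The coefficient~$a(t)=\|\mathbb{J}[g](t)\|_{H^m(\mathbb{R}^d)}^2$ must be shown to be integrable on~$[0,1]$, with~$\int_0^1a\leqslant C\eta^2$. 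Once both hold, Grönwall gives
\[\mathcal{M}(g(t))\leqslant e^{\int_0^1 a}\Big(\mathcal{M}(g^0)+C\|g^0\|_{H^{s,0}}^2\Big),\]
and since~$\|g^0\|_{H^{s,0}}\leqslant\eta\leqslant1$ we have~$\|g^0\|_{H^{s,0}}^2\leqslant\|g^0\|_{H^{s,0}}$, which yields the claimed bound.

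The key step is the integrability of~$a(t)$. Recall that~$m=s+\theta$ with~$m>\frac d2$, so~$\|\mathbb{J}[g]\|_{H^m}$ is not directly controlled by~$\|g\|_{H^{s,0}}$. I would use the time-weighted functional from the proof of~\Cref{prop-short-time-nonlinear}, that is, estimate~\eqref{fHm-barF} with~$\nu=\nu_0t^4$:
\[\|\mathbb{J}[g]\|_{H^m}^2\leqslant\|g\|_{H^{m,0}}^2\leqslant\nu^{-\theta}\overline{\mathcal{F}}(t,g)\lesssim t^{-4\theta}\,\overline{\mathcal{F}}(0,g^0).\]
Here we used the bound~$\overline{\mathcal{F}}(t,g)\leqslant\overline{\mathcal{F}}(0,g^0)/(1-(t/T_{g^0})^\kappa)\leqslant C\overline{\mathcal{F}}(0,g^0)$, valid because~$T_{g^0}\geqslant2$. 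Since~$4\theta<1$ (because~$\kappa=1-4\theta-\frac\mu2>0$), the weight~$t^{-4\theta}$ is integrable on~$[0,1]$, so~$\int_0^1a\lesssim\overline{\mathcal{F}}(0,g^0)=\|g^0\|_{H^{s,0}}^2\leqslant\eta^2$. Alternatively one could use~\eqref{gradJfHm-barFD}, which involves only the derivative of~$\mathbb{J}$ and carries a weight~$t^{-(4\theta+\frac{\mu}{2})}$ (up to the exponents of the coefficients), still integrable. The cruder bound above suffices.

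The main obstacle is justifying the computation: \eqref{estimate-ddtM-F-MJ2} was derived formally, for smooth functions with~$\mathcal{M}$ finite. I would first apply it to compactly supported data; by finite speed of propagation and smoothing, the solution then has compact support and is smooth for positive times. I would then pass to the limit by approximating~$g^0$ in the weighted norm~$\|g^0\|_{H^{s,0}}+\sqrt{\mathcal{M}(g^0)}$. Continuity with respect to the initial data in~$H^{s,0}$ (\Cref{thm-well-posedness-Hs0}) together with lower semicontinuity of~$\mathcal{M}$ (Fatou) transfers the uniform bound to the limit. Alternatively, one can use a truncated weight~$\min(|x|^2,R^2)$ smoothed, redo the computation with error terms of the form~$\nabla_x$ of the weight times~$v$, which are bounded by the same quantities, and let~$R\to\infty$.
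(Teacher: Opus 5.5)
Your argument is correct and is essentially the paper's: Gr\"onwall on \eqref{estimate-ddtM-F-MJ2}, with the source bounded through \Cref{prop-short-time-nonlinear} and the coefficient bounded by $Ct^{-4\theta}\|g^0\|_{H^{s,0}}^2$ via \eqref{fHm-barF} with $\nu=\nu_0t^4$, which is integrable since $4\theta<1$. Your approximation argument and the step $\|g^0\|_{H^{s,0}}^2\leqslant\|g^0\|_{H^{s,0}}$ fill in details the paper leaves implicit. Your aside about using \eqref{gradJfHm-barFD} instead is wrong: with $\gamma=\gamma_0t^3$ and $\delta=\delta_0t^4$ its weight is $t^{-(3+4\theta+\mu/2)}$, which is not integrable at $0$, but you rightly do not rely on it.
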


\begin{proof}[\textbf{Proof of~\Cref{prop-M-short-time}}]
  We use \eqref{estimate-ddtM-F-MJ2}, bounding~$\|\mathbb{J}[g]\|_{H^{m}(\mathbb{R}^d)}$ by~$\|f\|_{H^{m,0}}$ and using~\eqref{fHm-barF}, remembering that~$\nu(\tau)=\nu_0\tau^4$, to obtain for~$t\leqslant1$, thanks to the uniform estimates of~\Cref{prop-short-time-nonlinear}:
     \[\frac{\d}{\d t}\mathcal{M}(g)\leqslant C\Big(\|g^0\|_{H^{s,0}}^2+\mathcal{M}(g)\frac{\|g^0\|_{H^{s,0}}^2}{t^{4\theta}}\Big),\]
 and since~$\theta<\frac14$, $t\mapsto t^{-4\theta}$ is integrable in the neighborhood of~$0$ which allows to solve this differential inequality and end the proof.
\end{proof}
\subsection{Nonlinear stability and behaviour in large time: proof of \texorpdfstring{\Cref{thm-stabilite-nonlineaire-Hs1}}{Theorem 1.3}}

We now take~$m>\frac{d}2$ and~$\theta=0$, and consider constant coefficients~$\alpha$,~$\beta$,~$\gamma$, and~$\delta$.

The second estimate of~\Cref{prop-estimates-barR} then becomes
\[\overline{\mathcal{R}}(g,g,g)\leqslant C\sqrt{\frac{\overline{\mathcal{F}}(g)}{\alpha}}\overline{\mathcal{D}}(g),\]
where~$C$ only depends on~$m$ and~$\mu$.

Since we will not need here higher derivatives, and no estimate involving~$\nu$ will be needed, we may also take~$\nu=0$, simplifying \Cref{def-F-D-B-R-combined} in
  \begin{gather*}
    \overline{\mathcal{F}}(f)=\mathcal{F}_{0,0}(f)+\mathcal{F}_{0,m}(f)+\mathcal{F}_{1,0}(f)+\mathcal{F}_{1,m}(f),\quad 
    \overline{\mathcal{D}}(f)=\mathcal{D}_{0,0}(f)+\mathcal{D}_{0,m}(f)+\mathcal{D}_{1,0}(f)+\mathcal{D}_{1,m}(f),\\
    \overline{\mathcal{B}}(f)=\mathcal{B}_{0}(f)+\mathcal{B}_{m}(f),\quad
    \overline{\mathcal{R}}(f,g,h)=\mathcal{R}_{0,0}(f,g,h)+\mathcal{R}_{0,m}(f,g,h)+\mathcal{R}_{1,0}(f,g,h)+\mathcal{R}_{1,m}(f,g,h).
  \end{gather*}

We therefore now have
  \begin{equation*}
    %\label{ddtFg-nonlinear-notime}
    \frac{\d}{\d t}\overline{\mathcal{F}}(g)=-2\overline{\mathcal{D}}(g)+\overline{\mathcal{B}}(g)+\frac2d\big(\rho\,\overline{\mathcal{R}}(g,1,g)+\overline{\mathcal{R}}(g,g,g)\big),
  \end{equation*}
  and the estimates coming from~\Cref{prop-bar-B-dtauF} and~\Cref{prop-control-Rg1g} allow, for $\varepsilon$ as small as needed, to find coefficients only depending on~$m$,~$d$,~$\mu$ and~$\varepsilon$ (such as~$(\beta,\gamma,\delta)=(\alpha^{\frac32},4\alpha^2,4\alpha^2)$ with $\alpha$ sufficiently small) such that

  \begin{equation*}
    %\label{ddtFg-control-nonlinear-notime}
    \frac{\d}{\d t}\overline{\mathcal{F}}(g)=-2\left(1-\frac{\rho}d-\left(\frac12+\frac{2\rho}d\right)\varepsilon-C\rho\sqrt{\overline{\mathcal{F}}(g)}\right)\overline{\mathcal{D}}(g),
  \end{equation*}
  where~$C$ only depends on~$m$,~$d$,~$\mu$ and~$\varepsilon$. Fixing~$\mu$ arbitrarily in~$(0,1]$ when~$d=3$, and chosing~$\varepsilon$ sufficiently small, we therefore get the following proposition.
  \begin{proposition}\label{prop-decrease-F-nonlinear} Let~$m>\frac{d}2$. There exist positive coefficients~$\alpha$,~$\beta$,~$\gamma$,~$\delta$ satisfying~\Cref{assumption-coeffs}, and positive constants~$\eta$ and~$C$, only depending on~$m$,~$d$ and~$\rho$ such that any solution~$g$ of~\eqref{eq-VFPg-condensed} with initial condition~$g^0$ such that~$\|g^0\|_{H^{m+1,1}}\leqslant\eta$ is well-defined globally in time, and satisfying the following energy~--~energy-dissipation estimate
    \[\frac{\d}{\d t}\overline{\mathcal{F}}(g)\leqslant-C\,\overline{\mathcal{D}}(g).\]
  \end{proposition}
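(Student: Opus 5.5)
The plan is a bootstrap (continuity) argument built on the energy identity with constant coefficients. Take $\theta=0$ and $\nu=0$, and fix $\alpha$ small with $(\beta,\gamma,\delta)=(\alpha^{\frac32},4\alpha^2,4\alpha^2)$. First, check that these coefficients satisfy \Cref{assumption-coeffs}; the constraint involving $\nu$ is vacuous. They should also satisfy the conditions~\eqref{conditions-coeffs-notime} and~\eqref{conditions-coeffs-Rg1g} for a prescribed small $\varepsilon$, once $\alpha\le\alpha(\varepsilon,d,\mu)$; for $d=3$ we fix $\mu\in(0,1]$ arbitrarily, say $\mu=1$. With these choices:
\begin{itemize}
\item \Cref{prop-bar-B-dtauF} gives $\overline{\mathcal{B}}(g)\le\varepsilon\overline{\mathcal{D}}(g)$, and $\partial_\tau\overline{\mathcal{F}}=0$ since the coefficients are constant.
\item \Cref{prop-control-Rg1g} gives $\overline{\mathcal{R}}(g,1,g)\le(1+\varepsilon)\overline{\mathcal{D}}(g)$.
\item The second estimate~\eqref{estimate-bar-R-D2-F} of \Cref{prop-estimates-barR} with $f=g=h$ gives $\overline{\mathcal{R}}(g,g,g)\le C\alpha^{-1/2}\sqrt{\overline{\mathcal{F}}(g)}\,\overline{\mathcal{D}}(g)$.
\end{itemize}
Inserting these three bounds into~\eqref{ddtFg-nonlinear} yields
\[\frac{\d}{\d t}\overline{\mathcal{F}}(g)\le-2\Big(1-\frac{\rho}{d}-c\varepsilon-C'\sqrt{\overline{\mathcal{F}}(g)}\Big)\overline{\mathcal{D}}(g).\]
Since $\rho<d$, we choose $\varepsilon$ so that $c\varepsilon\le\frac14(1-\frac{\rho}{d})$. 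These choices depend only on $m$, $d$ and $\rho$.

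Next comes the smallness at time zero. With $\theta=0$ and $\nu=0$, $\overline{\mathcal{F}}$ involves $\|g\|_{L^2}$, $\|g\|_{\dot H^{m,0}}$, and the $\mathcal{F}_1$-terms at orders $0$ and $m$. By \eqref{eq-F-Hs1-upper}, the latter are bounded by $\|\op{A}g\|^2+\|\op{S}g\|^2+\|\op{T}g\|^2$ in $H^{m,0}$, which in turn is bounded by $\|g\|_{H^{m+1,1}}^2$. Hence $\overline{\mathcal{F}}(0,g^0)\le C_0\|g^0\|_{H^{m+1,1}}^2\le C_0\eta^2$. Pick $\eta$ so that $C'\sqrt{C_0}\,\eta\le\frac14(1-\frac{\rho}{d})$.

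The bootstrap then runs on the local solution, which exists by \Cref{thm-well-posedness-Hs0} since $H^{m+1,1}\subset H^{s,0}$ for admissible $s$; the computations are justified by smoothness for $t>0$ and by density. Let $T^*$ be the supremum of the times $t$ with $C'\sqrt{\overline{\mathcal{F}}(g(\tau))}\le\frac12(1-\frac{\rho}{d})$ on $[0,t]$. On $[0,T^*)$ the bracket is at least $\frac14(1-\frac{\rho}{d})$, so $\overline{\mathcal{F}}$ is nonincreasing and stays at most $C_0\eta^2$, strictly below the threshold. By continuity of $t\mapsto\overline{\mathcal{F}}(g(t))$, $T^*$ therefore equals the maximal existence time. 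Since $\overline{\mathcal{F}}$ controls the $H^{m,0}$ norm, which is uniformly bounded, local existence with a time $T_M$ that is uniform in the bound lets us restart the solution repeatedly, so it is global. The estimate $\frac{\d}{\d t}\overline{\mathcal{F}}\le-C\overline{\mathcal{D}}$ then holds for all $t$ with $C=\frac12(1-\frac{\rho}{d})$.

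I expect the main obstacle to be the rigorous justification rather than the inequalities. The identity~\eqref{ddtFg-nonlinear} requires regularity, roughly $H^{m+2,2}$, to make sense of $\overline{\mathcal{D}}$. I would handle this by working with smooth, compactly supported approximations of $g^0$, using the continuous dependence from \Cref{thm-well-posedness-Hs0}, and then passing to the limit in the integrated form $\overline{\mathcal{F}}(t)+C\int_0^t\overline{\mathcal{D}}\le\overline{\mathcal{F}}(0)$, using lower semicontinuity for the dissipation term.
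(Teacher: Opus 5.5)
Your proposal is correct and follows the paper's own argument. Like the paper, you take constant coefficients with $\theta=0$, $\nu=0$ and $(\beta,\gamma,\delta)=(\alpha^{\frac32},4\alpha^2,4\alpha^2)$, insert \Cref{prop-bar-B-dtauF}, \Cref{prop-control-Rg1g} and the second estimate~\eqref{estimate-bar-R-D2-F} into~\eqref{ddtFg-nonlinear}, and fix $\varepsilon$ small relative to $1-\frac{\rho}{d}$; your cubic term correctly carries no factor $\rho$, unlike the paper's displayed formula. Your bootstrap, restart and approximation steps only make explicit what the paper leaves implicit, with one small correction: on $\mathbb{R}^d$, local existence for $f=\rho+g$ comes from the finite-speed localization remark after \Cref{thm-well-posedness-Hs0} (or from \Cref{prop-short-time-nonlinear}), not from \Cref{thm-well-posedness-Hs0} directly, since $\rho\notin H^{s,0}(\mathbb{R}^d\times\S)$.
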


  We are now ready to finish the proof of~\Cref{thm-stabilite-nonlineaire-Hs1}. Combining~\Cref{prop-decrease-F-nonlinear} with~\Cref{prop-short-time-nonlinear} and using the equivalence between norms, it is sufficient to prove the decay estimates with an initial condition sufficiently small in~$H^{m+1,1}$ with~$m>\frac{d}2$, and apply them for~$t\geqslant1$, since we have~$s+1>\frac{d}2$. Therefore we start from~\Cref{prop-decrease-F-nonlinear}. In the case of a flat torus of dimension~$d$, as explained in the beginning of the proof of~\Cref{prop-decay-linear-H1}, the Poincaré inequality allows to obtain the existence of a constant~$\lambda>0$ such that for any initial condition~$g^0$ with zero average (a property that is conserved for all time), and sufficiently small in the sense of~\Cref{prop-decrease-F-nonlinear}, we have
  \begin{equation*}%\label{eq-exp-decay-nonlinear-Fg}
    \overline{\mathcal{F}}(g(t,\cdot))\leqslant e^{-\lambda t}\overline{\mathcal{F}}(g^0).
  \end{equation*}

  In the case of the whole space~$\mathbb{R}^d$, we proceed similarly as in~\Cref{prop-decay-linear-H1-M}, by assuming that~$\mathcal{M}(g^0)$ is finite. Indeed, by \Cref{prop-M-short-time}, such a property is conserved up to time~$t=1$. We use~\eqref{estimate-ddtM-F-MJ2}, and~\Cref{lemmeAU}--\ref{U2} together with~\eqref{AfHm-barFD} to bound~$\|\mathbb{J}[g]\|^2_{H^m(\mathbb{R}^d)}$ by~$\overline{\mathcal{D}}(g)$, and therefore we obtain

  \begin{equation}\label{dtMg-F-DM}
    \frac{\d}{\d t}\mathcal{M}(g)\leqslant C\big(\overline{\mathcal{F}}(g)+\overline{\mathcal{D}}(g)\mathcal{M}(g)\big),
  \end{equation}
  where the constant~$C$ only depends on~$m$,~$d$ and~$\rho$.
  Compared to the case of the kinetic Brownian motion in \Cref{subsec-long-time-KBM}, this differential inequality for~$\mathcal{M}$ seems to give a little less control than the corresponding inequality given in~\eqref{dtM-F}, but we can actually take advantage of the fact that~$\overline{\mathcal{D}}(g)$ is integrable in time, and the procedure is similar.

  We still use Heisenberg’s uncertainty principle to obtain~$\mathcal{F}_{0,0}(g)\leqslant C\sqrt{\mathcal{M}(g)\overline{\mathcal{D}}(g)}$, as in the proof of~\Cref{prop-decay-linear-H1-M}, and therefore, as in~\eqref{dtF-F2}, to obtain a constant~$c$ depending only on~$m$,~$d$,~$\rho$ such that under the assumptions of~\eqref{prop-decrease-F-nonlinear}, we have
    
    \begin{equation*}
      \dt \overline{\mathcal{F}}(g) \leq - c \, \frac{\overline{\mathcal{F}}(g)^2}{\overline{\mathcal{F}}(g)+\mathcal{M}(g)}. %\label{dtbarF-F2}
    \end{equation*}
    
    The differential inequality given by~\eqref{dtMg-F-DM} can be solved explicitly, we obtain
    \begin{align*}
     \mathcal{M}(g)&\leqslant\mathcal{M}(g^0)\exp\Big(C\int_0^t\overline{\mathcal{D}}(g(s,\cdot))\d s\Big)+C\int_0^t\overline{\mathcal{F}}(g(s,\cdot))\exp\Big(C\int_s^t\overline{\mathcal{D}}(g(u,\cdot))\d u\Big)\d s\\
                    &\leqslant\mathcal{M}(g^0)e^{\overline{\mathcal{F}}(g^0)-\overline{\mathcal{F}}(g(t,\cdot))}+C\int_0^t\overline{\mathcal{F}}(g(s,\cdot))e^{\overline{\mathcal{F}}(g^0)-\overline{\mathcal{F}}(g(s,\cdot))}\d s\\
      &\leqslant e^{\eta}\Big(\mathcal{M}(g^0)+C\int_0^t\overline{\mathcal{F}}(g(s,\cdot))\d s\Big).                    
    \end{align*}
    Therefore we proceed as in~\Cref{subsec-long-time-KBM}, by letting~$\mathcal{U}(t)=\mathcal{M}(g^0)+\overline{\mathcal{F}}(g^0)+C\int_0^t\overline{\mathcal{F}}(g(s,\cdot))$. Since~$\overline{\mathcal{F}}(g)$ is nonincreasing, we get
    \[\mathcal{M}(g)+\overline{\mathcal{F}}(g)\leqslant\overline{\mathcal{F}}(g^0)+e^{\eta}\Big(\mathcal{M}(g^0)+C\int_0^t\overline{\mathcal{F}}(g(s,\cdot))\d s\Big)\leqslant e^{\eta}\,\mathcal{U}(t),\]
    and therefore we obtain
    \[\mathcal{U}''(t)=C\overline{\mathcal{F}}(g)\leqslant-\frac{c}{Ce^\eta}\frac{\mathcal{U}'(t)^2}{\mathcal{U}}.\]
    We solve this differential inequality exactly as in the proof of~\Cref{prop-decay-linear-H1-M}, with~$p=\frac{c}{Ce^\eta}$, and we obtain
    \[\overline{\mathcal{F}}(g(t,\cdot))\leqslant\widetilde{C}\overline{\mathcal{F}}(g^0)\Big(1+\frac{\overline{\mathcal{F}}(g^0)}{\mathcal{U}(0)}t\Big)^{-\frac{p}{p+1}}.\]
Together with the estimates for the short time behaviour given in~\Cref{prop-short-time-nonlinear,prop-M-short-time}, and the equivalence of~$\overline{\mathcal{F}}$ with the~$H^{m+1,1}$ squared norm (thus controlling the $H^{s,0}$ squared norm), this ends the proof of~\Cref{thm-stabilite-nonlineaire-Hs1}.

\appendix
\section{Controlling \texorpdfstring{$H^1$}{H¹} seminorms of moments by \texorpdfstring{$\|\op{S}g\|^2$}{‖Sg‖²}.}
\label{section-appendix}
For a given~$g\in L^2(\mathbb{R}^d\times\S)$, we denote~$\op{\Pi}_\ell g$ the projection of~$g$ on the space of spherical harmonics of degree~$\ell$ (in its velocity variable~$v$). For instance we have~$\op{\Pi}_0g=\int_{\S}g(\cdot,v)\d v$ and~$\op{\Pi}_1g=d \, v\cdot\mathbb{J}[g]$.
\begin{lemma}\label{lemma-Sg2}
  Let~$\ell\geqslant0$ and~$g\in H^{1,0}(\mathbb{R}^d\times\S)$ (the result is also true for a flat torus of dimension~$d$ instead of~$\mathbb{R}^d$). Recall that~$\|\op{S}g\|_2^2=\|P_{v^\perp}\nabla_xg\|_2^2$ and~$\|\op{S}g\|_2^2=\|P_{v^\perp}\nabla_xg\|_2^2$. We have 
  \[\|\nabla_x\op{\Pi}_\ell g\|_2^2\lesssim\begin{cases}
      \|\op{S}g\|_2^2+\|\op{S}g\|_2\|\op{T}g\|_2 & \text{if }d=2,\\
      \|\op{S}g\|_2^2+\|\op{S}g\|_2^{2-\mu}\|\op{T}g\|_2^{\mu} & \text{if }d=3 \quad (\text{for all }\mu\in(0,1]),\\
      \|\op{S}g\|_2^2 & \text{if }d\geqslant4,\\
      \end{cases}
    \]
    where the symbol~$\lesssim$ denotes an inequality up to a multiplicative constant only depending on~$d$,~$\mu$ and~$\ell$.
  \end{lemma}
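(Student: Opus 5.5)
The plan is to work in Fourier variables in $x$ and reduce the statement to a one-frequency inequality on the sphere. Denote by $\hat g(\xi,v)$ the Fourier transform of $g$ in $x$, and write $\xi=|\xi|e$ with $e\in\S$. By Plancherel and $\op{S}=v\wedge\nabla_x$, $\op{T}=v\cdot\nabla_x$, and since $\op{\Pi}_\ell$ commutes with the Fourier transform in $x$,
\[\|\op{S}g\|_2^2=\int|\xi|^2a(\xi)\,\d\xi,\quad \|\op{T}g\|_2^2=\int|\xi|^2b(\xi)\,\d\xi,\quad \|\nabla_x\op{\Pi}_\ell g\|_2^2=\int|\xi|^2\|\op{\Pi}_\ell\hat g(\xi,\cdot)\|_{L^2(\S)}^2\d\xi,\]
where
\[a(\xi)=\int_\S(1-(v\cdot e)^2)|\hat g|^2\d v,\qquad b(\xi)=\int_\S(v\cdot e)^2|\hat g|^2\d v.\]
Everything then reduces to a pointwise-in-$\xi$ bound: for $h\in L^2(\S)$ and a unit vector $e$,
\[\|\op{\Pi}_\ell h\|^2\lesssim a\ \ (d\geqslant4),\qquad \|\op{\Pi}_\ell h\|^2\lesssim a+a^{1-\mu}b^{\mu}\ \ (d=2,3),\]
with $\mu=\frac12$ for $d=2$ (this is the $d=2$ case of the lemma, whose $\mu=1$ exponent refers to $\|\op{S}g\|_2\|\op{T}g\|_2$, i.e.\ $a^{1/2}b^{1/2}$) and $\mu\in(0,1)$ arbitrary for $d=3$ (the endpoint $\mu=1$ follows from $\mu=\frac12$, since $a^{1/2}b^{1/2}\leqslant a+b$ and the $\|\op{T}g\|_2^2$ term is then absorbed, or simply by Young with $a$ dominating). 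After that, we multiply by $|\xi|^2$, integrate, and apply Hölder in $\xi$ with exponents $\frac1{1-\mu},\frac1\mu$ to get $\int|\xi|^2a^{1-\mu}b^\mu\leqslant\|\op{S}g\|_2^{2(1-\mu)}\|\op{T}g\|_2^{2\mu}$, which is the claimed form.

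For the pointwise bound, we write $\|\op{\Pi}_\ell h\|=\sup|\langle h,Y\rangle|$ over $Y$ in the unit sphere of degree-$\ell$ harmonics. This is a finite-dimensional space, so $\|Y\|_\infty\leqslant C_\ell$ uniformly. Set $w(v)=1-(v\cdot e)^2$. The pushforward of the normalized measure on $\S$ by $v\mapsto v\cdot e$ has density $c_d(1-t^2)^{\frac{d-3}2}$ on $(-1,1)$. When $d\geqslant4$, Cauchy--Schwarz gives
\[|\langle h,Y\rangle|^2\leqslant a\int_\S\frac{|Y|^2}{w}\d v\leqslant C_\ell^2\,c_d\,a\int_{-1}^1(1-t^2)^{\frac{d-5}2}\d t,\]
and the last integral is finite exactly when $d\geqslant4$. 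This settles that case.

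For $d=2,3$ we split $\S$ into $\{w\geqslant\varepsilon\}$ and $\{w<\varepsilon\}$ with $\varepsilon\in(0,\frac12]$. On the first region, Cauchy--Schwarz as above gives a contribution $\lesssim a\int_{w\geqslant\varepsilon}w^{-1}$, which is $\lesssim a\,\varepsilon^{-1/2}$ for $d=2$ and $\lesssim a\log(1/\varepsilon)\lesssim_\kappa a\,\varepsilon^{-\kappa}$ for any $\kappa>0$ when $d=3$. On the polar caps $\{w<\varepsilon\}$ we have $(v\cdot e)^2\geqslant\frac12$, so $\int_{w<\varepsilon}|h|^2\leqslant2b$, while $|\{w<\varepsilon\}|\lesssim\varepsilon^{\frac{d-1}2}$. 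Using $\|Y\|_\infty\leqslant C_\ell$, this contributes $\lesssim b\,\varepsilon^{\frac{d-1}2}$, so at least $\lesssim b\,\varepsilon^{1/2}$ in both dimensions. Altogether,
\[|\langle h,Y\rangle|^2\lesssim a\,\varepsilon^{-\kappa}+b\,\varepsilon^{\lambda},\]
with $(\kappa,\lambda)=(\frac12,\frac12)$ for $d=2$ and $\lambda=1$, $\kappa>0$ free for $d=3$. If $a\geqslant b$ we take $\varepsilon=\frac12$ and get $\lesssim a$. Otherwise we optimize $\varepsilon=(a/b)^{1/(\kappa+\lambda)}$, giving $a^{\lambda/(\kappa+\lambda)}b^{\kappa/(\kappa+\lambda)}$. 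This yields $\mu=\frac12$ for $d=2$ and $\mu=\frac{\kappa}{1+\kappa}$, which ranges over $(0,1)$, for $d=3$.

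The main point requiring care is the geometric one: the singularity of $\int|Y|^2/w$ at the poles $v=\pm e$ is exactly borderline (logarithmic) in $d=3$ and non-integrable in $d=2$. The fix is to trade the region near the poles against $\op{T}$, where $(v\cdot e)^2\approx1$, and this is precisely what forces the interpolation with $\|\op{T}g\|_2$. The constants depend on $\ell$ only through $\|Y\|_\infty$ and $\dim$ of the harmonic space, and the same argument applies verbatim on a flat torus with a Fourier series in place of the Fourier transform.
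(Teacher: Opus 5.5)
Your proof is correct. After the common first step (Fourier transform in $x$, reduction to a one-frequency inequality on the sphere for a fixed axis $e$, then Hölder in $\xi$), it takes a genuinely different route from the paper.

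The paper expands $h$ in spherical harmonics adapted to $e$, $Y_{\ell,m,k}=Q_{\ell,m}(\cos\theta)\sin^m\theta\, Z_{m,k}(w)$. A weighted Cauchy--Schwarz shows that every mode with $2m+d\geqslant4$ is controlled by $\mathcal{I}_e(h)$ alone, so only the zonal modes ($m=0$) in $d\in\{2,3\}$ remain. For $d=3$ it uses Cauchy--Schwarz with the integrable weight $\sin^{-1+\mu}\theta$, then Hölder interpolation between $\int h^2\sin^3\theta$ and $\int h^2\sin\theta$. For $d=2$ this method would only give exponents $\mu>1$. The paper therefore resorts to an explicit Chebyshev/Fourier-series identity for $\int h_{0,1}^2\sin^2\theta$, plus a telescoping trick, to reach $\mu=1$.

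You avoid the adapted basis altogether:
\begin{itemize}
\item For $d\geqslant4$, the uniform bound $\|Y\|_\infty\leqslant C_\ell$ together with integrability of $w^{-1}$ settles everything at once.
\item For $d=2,3$, you cut at $\{w<\varepsilon\}$, where $(v\cdot e)^2\geqslant\frac12$ lets you pay with $b$ rather than $a$, and then optimize in $\varepsilon$. This real-interpolation-type argument reaches the $d=2$ endpoint $a+\sqrt{ab}$ with no computation.
\item In $d=3$ the same splitting even gives $a\,(1+\log_+(b/a))$, which is better than the bound for every $\mu>0$.
\end{itemize}
What the paper's decomposition buys in exchange is structural information: the loss comes only from zonal modes, since non-zonal harmonics vanish at the poles. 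Your crude $L^\infty$ bound does not see this, but the lemma does not need it.

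Two cosmetic points.
\begin{itemize}
\item When $a<b$, take $\varepsilon=\min\big(\tfrac12,(a/b)^{1/(\kappa+\lambda)}\big)$, so that the restriction $\varepsilon\leqslant\frac12$ used on the caps is respected.
\item Your parenthetical about the endpoint $\mu=1$ in $d=3$ mixes your exponent with the lemma's and is not needed. Your exponent $\kappa/(1+\kappa)$ corresponds to the lemma's $\mu=2\kappa/(1+\kappa)$, so $\kappa\in(0,1]$ already covers the full range $\mu\in(0,1]$.
\end{itemize}
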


  \begin{proof}[\textbf{Proof of~\Cref{lemma-Sg2}.}]
      We work in Fourier variable in space, writing $g(x,v)=\int_{\mathbb{R}^d} \hat{g}(\xi,v)e^{ix\cdot\xi}\d \xi$, and proceed mode by mode. Therefore we have $\int_{\mathbb{R}^d\times\S}|P_{v^\perp}\nabla_xg|^2=\int_{\mathbb{R}^d}|\xi|^2\int_{\S}(1-(e\cdot v)^2)|\hat{g}(\xi,v)|^2\d v \d \xi$, where we have denoted~$e=\frac{\xi}{|\xi|}$, for~$\xi\neq0$.
  We therefore fix~$e\in\S$ and want to study integrals of the form
  \[\mathcal{I}_{e}(h)=\int_{\S}(1-(e\cdot v)^2)h(v)^2\d v.\]
  We therefore have
  \[\|\op{S}g\|_2^2=\int_{\mathbb{R}^d}|\xi|^2\mathcal{I}_{e}(\hat{g}(\xi,\cdot))\d \xi,\  \|\op{S}g\|_2^2+\|\op{T}g\|_2^2=\int_{\mathbb{R}^d}|\xi|^2\|\hat{g}(\xi,\cdot)\|^2\d \xi, \  \|\nabla_x\op{\Pi}_\ell g\|_2^2=\int_{\mathbb{R}^d}|\xi|^2\|\op{\Pi}_\ell\hat{g}(\xi,\cdot)\|^2\d \xi.\]
  In virtue of Hölder inequality, \Cref{lemma-Sg2} will be proven if we manage to prove that for any~$h\in L^2(\S)$, 
  \begin{equation}
    \label{goal-on-I}
    \|\op{\Pi}_\ell h\|_2^2\lesssim\begin{cases}
      \sqrt{\mathcal{I}_{e}(h)}\|h\|_2 & \text{if }d=2,\\
      \big(\mathcal{I}_{e}(h)\big)^{1-\frac{\mu}2}\|h\|_2^{\mu} & \text{if }d=3 \quad (\text{for all }\mu\in(0,1]),\\
      \mathcal{I}_{e}(h) & \text{if }d\geqslant4.
      \end{cases}
    \end{equation}
    
We decompose~$h$ in spherical harmonics (see for instance~\cite[Appendix A.1]{frouvelle2012dynamics}):
\[h(v)=\sum_{k,\ell,m}c_{\ell,m,k}(h)Y_{\ell,m,k}(v),\]
where~$(Y_{\ell,m,k})_{0\leqslant m\leqslant\ell,1\leqslant k\leqslant n_m=\binom{d+m-2}{d-2}-\binom{d+m-4}{d-2}}$ is an orthonormal basis of spherical harmonics
(of degree~$\ell$, implying that~$\op{\Delta}_vY_{\ell,m,k}(v)=-\ell(\ell+d-2)Y_{\ell,m,k}(v)$) for which we precise next the construction. Our goal is then to show that for any~$\ell,m,k$, we have (with~$\mu=1$ for~$d=2$ and~$\mu=0$ for~$d=4$):
\begin{equation}\label{goal-clmk}
  c_{\ell,m,k}(h)^2\lesssim\big(\mathcal{I}_e(h)\big)^{1-\frac{\mu}2}\|h\|_2^{\mu}.
\end{equation}

We write~$v=\cos \theta\,e + \sin \theta\, w$ with~$\theta\in[0,\pi]$ and~$w$ seen as a vector of~$\S^{d-2}$ (the unit vectors orthogonal to~$e$). We suppose we are given~$(Z_{m,k})_{1\leqslant k\leqslant n_m}$ an orthogonal basis of the spherical harmonics of degree~$m$ on the sphere $\S^{d-2}$. We then have~$Y_{\ell,m,k}(v)=Q_{\ell,m}(\cos \theta) \sin^{m}\theta Z_{m,k}(w)$ where~$Q_{\ell,m}$ is a polynomial on degree~$\ell-m$, obtained from the so called Gegenbauer polynomials, but we will not need their specific expression here. Indeed, we write~$h_{m,k}(\theta)=\sum_{\ell\geqslant m}c_{\ell,m,k}Q_{\ell,m}(\cos \theta)$ and therefore we have
\[h(v)=\sum_{m,k}h_{m,k}(\theta)\sin^{m}\theta Z_{m,k}(w).\]
To simplify notations, we suppose that the normalization of the basis~$(Z_{m,k})$ is done in such a way that
\[\int_{\S}\sin^{2m}\theta Z^2_{m,k}(w)\d v=\int_{0}^\pi\sin^{2m+d-2}\theta\, \d \theta,\] and therefore we have
\begin{equation}
  \label{eq-h2-Ieh}\|h\|_2^2=\sum_{m,k}\int_{0}^\pi h_{m,k}(\theta)^2\sin^{2m+d-2}\theta\, \d \theta,\quad \mathcal{I}_e(h)=\sum_{m,k}\int_{0}^\pi h_{m,k}(\theta)^2\sin^{2m+d}\theta\, \d \theta.
\end{equation}
We now have, using Cauchy--Schwarz inequality,
\[ \begin{split}
    c_{\ell,m,k}(h)^2=\Big(\int_{\S}h(v)Y_{\ell,m,k}(v)\d v\Big)&=\Big(\int_0^\pi h_{m,k}(\theta)Q_{\ell,m}(\cos \theta)\sin^{2m+d-2}\theta\, \d \theta\Big)\\
    &\lesssim\int_0^\pi h_{m,k}(\theta)^2\sin^{4m+2d-4}\theta\, \d \theta,
  \end{split}
\]
and therefore when~$4m+2d-4\geqslant2m+d$ (that is to say~$2m+d\geqslant4$), we obtain~$c_{\ell,m,k}(h)^2\lesssim\mathcal{I}_e(h)$. So the only remaining cases to treat are~$m=0$ for~$d\in\{2,3\}$ (for which the only possibility is~$k=1$).

For the case~$d=3$ and~$m=0$, we have, since~$\sin^{-1+\mu}{\theta}$ is integrable on~$[0,\pi]$, 
\[ \begin{split}
    c_{\ell,0,1}(h)^2&=\Big(\int_0^\pi h_{0,1}(\theta)Q_{\ell,0}(\cos \theta)\sin^{\frac{3-\mu}2} \theta\sin^{\frac{-1+\mu}2} \theta\, \d \theta\Big)\\
    &\lesssim\int_0^\pi h_{0,1}(\theta)^2\sin^{3-\mu}\theta\, \d \theta\leqslant\Big(\int_0^\pi h_{0,1}(\theta)^2\sin^3\theta\, \d \theta\Big)^{1-\frac{\mu}2}\Big(\int_0^\pi h_{0,1}(\theta)^2\sin \theta\, \d \theta\Big)^{\frac{\mu}2},
  \end{split}
\]
thanks to Holder’s inequality, which gives~\eqref{goal-clmk} for~$m=0$ and~$d=3$.

For the case~$d=2$ (and~$m=0$), the same strategy would lead to~\eqref{goal-clmk} with any~$\mu>1$, so to refine it a little bit and get the case~$\mu=1$, we explicitly write the decomposition. Indeed we have~$Q_{0,0}=\frac{1}{\sqrt{\pi}}$ and~$Q_{\ell,0}(\cos \theta)=\sqrt{\frac{2}{\pi}}\cos \ell\theta$ (the Chebyshev polynomials), and therefore we obtain~$h_{0,1}$ under the form of its Fourier series, writing~$a_0=\frac{2}{\sqrt{\pi}} c_{0,0,1}$ and~$a_\ell=\sqrt{\frac{2}{\pi}}c_{\ell,0,1}$ for~$\ell\geqslant1$:
\[h_{0,1}(\theta)=\frac{a_0}2+\sum_{\ell\geqslant1}a_\ell\cos(\ell\theta)\]
giving for instance~$\int_0^\pi h_{0,1}(\theta)^2\sin^2\theta\,\d \theta=\frac{\pi}2\sum_{\ell\geqslant0}a_\ell^2$. To compute~$\int_0^\pi h_{0,1}(\theta)^2\sin^2\theta\,\d \theta$, we know the Fourier decomposition of~$\theta\mapsto\sin \theta h(\theta)$:
\[
  \sin(\theta)h(\theta)=\frac{a_0}2\sin\theta+\frac12\sum_{\ell\geqslant1}a_\ell(\sin((\ell+1)\theta)-\sin(\ell-1)\theta)=\frac12\sum_{\ell\geqslant1}(a_{\ell-1}-a_{\ell+1})\sin(\ell\theta)
\]
We therefore obtain
\begin{equation}
  \label{I-Fourier-2d} \int_0^\pi h_{0,1}(\theta)^2\sin^2\theta\,\d \theta=\frac{\pi}8\sum_{\ell\geqslant0}(a_\ell-a_{\ell+2})^2.
\end{equation}
Now we have, for any~$\ell_0\geqslant0$,
\begin{equation*}%\label{trick-al}
  \frac{\pi}8a_{\ell_0}^2+\frac{\pi}8a_{\ell_0+1}^2=\frac{\pi}8\sum_{\ell\geqslant\ell_0}a_{\ell}^2-a_{\ell+2}^2=\frac{\pi}8\sum_{\ell\geqslant\ell_0}(a_{\ell}-a_{\ell+2})^2-2a_{\ell+2}(a_{\ell}-a_{\ell+2}),
\end{equation*}
and thanks to Cauchy--Schwarz inequality and~\eqref{I-Fourier-2d}, we get
\[\begin{split}\frac{\pi}8a_{\ell_0}^2+\frac{\pi}8a_{\ell_0+1}^2&\leqslant\int_0^\pi h_{0,1}(\theta)^2\sin^2\theta\,\d \theta+\sqrt{\int_0^\pi h_{0,1}(\theta)^2\sin^2\theta\,\d \theta}\sqrt{\frac{\pi}2\sum_{\ell\geqslant\ell_0+2}a_\ell^2}\\
  &\leqslant2\sqrt{\int_0^\pi h_{0,1}(\theta)^2\sin^2\theta\,\d \theta}\sqrt{\int_0^\pi h_{0,1}(\theta)^2\,\d \theta},
\end{split}
\]
which finally proves the case~$d=2$ of~\eqref{goal-on-I}, thanks to~\eqref{eq-h2-Ieh}.
\end{proof}

\bibliographystyle{plain}
\bibliography{biblio-clean.bib}

\end{document}